\numberwithin{figure}{section}
\newtheorem{theorem}{Theorem}[section]
\newtheorem{lemma}{Lemma}[section]
\newtheorem{proposition}[lemma]{Proposition}
\newtheorem{definition}[lemma]{Definition}
\newtheorem{problem}{{\bf{Problem}}}
\numberwithin{equation}{section}
\begin{document}

\title[Contact discontinuities in 2-D infinitely long nozzles]{Contact discontinuities for 2-D inviscid compressible flows in infinitely long nozzles}

\author{Myoungjean Bae}
\address{Department of Mathematics, POSTECH, 77 Cheongam-Ro, Nam-Gu, Pohang, Gyeongbuk, Korea 37673; Korea Institute for Advanced Study, 85 Hoegiro, Dongdaemun-gu, Seoul 02455, Republic of Korea}
\email{mjbae@postech.ac.kr}

\author{Hyangdong Park}
\address{Department of Mathematics, POSTECH, 77 Cheongam-Ro, Nam-Gu, Pohang, Gyeongbuk, Korea 37673}
\email{hyangdong.park@postech.ac.kr}

\keywords{asymptotic state, compressible,  contact discontinuity, free boundary problem, Helmholtz decomposition, infinite nozzle, inviscid, steady Euler system, subsonic, vorticity}

\subjclass[2010]{
35J47, 35J57, 35J66, 35Q31, 35R35, 74J40, 76N10}

\date{\today}

\begin{abstract}
We prove the existence of a subsonic weak solution $({\bf u}, \rho, p)$ to steady Euler system in a two-dimensional infinitely long nozzle when prescribing the value of the entropy $(= \frac{p}{\rho^{\gamma}})$ at the entrance by a piecewise $C^2$ function with a discontinuity at a point. Due to the variable entropy condition with a discontinuity at the entrance, the corresponding solution has a nonzero vorticity and contains a contact discontinuity $x_2=g_D(x_1)$. We construct such a solution via Helmholtz decomposition.
The key step is to decompose the Rankine-Hugoniot conditions on the contact discontinuity via Helmholtz decomposition so that the compactness of approximated solutions can be achieved. Then we apply the method of iteration to obtain a piecewise smooth subsonic flow with a contact discontinuity and nonzero vorticity.
We also analyze the asymptotic behavior of the solution at far field. 
\end{abstract}
\maketitle

\tableofcontents

\section{Introduction}
In $\mathbb{R}^2$, the steady flow of inviscid compressible gas is governed by {\emph{the Euler system}}:
\begin{equation}\label{ES1}
\left\{
\begin{split}
&\mbox{div}(\rho{\bf u})=0,\\
&\mbox{div}(\rho{\bf u}\otimes{\bf u}+p{\mathbb I}_2)=0\quad({\mathbb I}_2: \mbox{$2\times 2$ identity matrix}),\\
&\mbox{div}\left(\rho\Bigl(E+\frac{p}{\rho}\Bigr){\bf u}\right)=0.
\end{split}
\right.
\end{equation}
In \eqref{ES1}, the functions $\rho=\rho({\rm x})$, ${\bf u}({\rm x})=(u_1, u_2)({\rm x})$, $p=p({\rm x})$, and $E=E({\rm x})$ represent the density, velocity, pressure, and the total energy density of the flow, respectively, at ${\rm x}=(x_1, x_2)\in \mathbb{R}^2$. In this paper, we consider an {\emph{ideal polytropic gas}} for which $E$ is given by
\begin{equation}
\label{definition-energy}
E=\frac 12|{\bf u}|^2+\frac{p}{(\gamma-1)\rho}
\end{equation}
for a constant $\gamma>1$, called the {\emph{adiabatic exponent}}. With the aid of \eqref{definition-energy}, the system \eqref{ES1} is closed, and can be rewritten as
\begin{equation}\label{E-S}
\left\{
\begin{split}
&\mbox{div}(\rho{\bf u})=0,\\
&\mbox{div}(\rho{\bf u}\otimes{\bf u}+p{\mathbb I}_2)=0,\\
&\mbox{div}(\rho{\bf u}B)=0,
\end{split}
\right.
\end{equation}
for the {\emph{Bernoulli invariant}} $B$ given by
\begin{equation}\label{Ber}
B=\frac{1}{2}|{\bf u}|^2+\frac{\gamma p}{(\gamma-1)\rho}=\frac{1}{2}|{\bf u}|^2+\frac{\gamma}{\gamma-1}S\rho^{\gamma-1}.
\end{equation}
Here, 
$S=p/\rho^{\gamma}$
denotes the entropy.

Let $\Omega\subset \mathbb{R}^2$ be an open and connected set. Suppose that a non-self-intersecting $C^1$-curve $\Gamma$ divides $\Omega$ into two disjoint open subsets $\Omega^{\pm}$ such that $\Omega=\Omega^-\cup\Gamma\cup \Omega^+$. 
Suppose that ${\bf U}=({\bf u},\rho,p)$ satisfies the following properties:
\begin{itemize}
\item[$(w_1)$] ${\bf U}\in [L^{\infty}_{\rm loc}(\Omega)\cap C^1_{\rm loc}(\Omega^{\pm})\cap C^0_{\rm loc}(\Omega^{\pm}\cup \Gamma)]^4$;
\item[$(w_2)$] For any $\xi\in C_0^{\infty}(\Omega)$ and $k=1,2$,
\begin{equation*}
\int_{\Omega}\rho{\bf u}\cdot \nabla\xi\,d{\rm x}=\int_{\Omega}(\rho u_k{\bf u}+p{\bf e}_k)\cdot \nabla\xi\, d{\rm x}=\int_{\Omega}\rho{\bf u}B\cdot \nabla\xi \,d{\rm x}=0.
\end{equation*}
Here, ${\bf e}_k$ is the unit vector in the $x_k$-direction.
\end{itemize}

By integration by parts, one can directly check that ${\bf U}$ satisfies the properties $(w_1)$ and $(w_2)$ if and only if
\begin{itemize}
\item[$(w_1^*)$] ${\bf U}$ satisfies the property $(w_1)$;
\item[$(w_2^*)$] ${\bf U}$ is a classical solution to \eqref{E-S} in $\Omega^{\pm}$, and satisfies the Rankine-Hugoniot conditions
\begin{eqnarray}
\label{RH1}&&[\rho{\bf u}\cdot{\bf n}]_{\Gamma}=[\rho{\bf u}\cdot{\bf n}B]_{\Gamma}=0,\\
\label{RH2}&&[\rho({\bf u}\cdot{\bf n}){\bf u}+p{\bf n}]_{\Gamma}={\bf 0},
\end{eqnarray}
for a unit normal vector field ${\bf n}$ on $\Gamma$, where $[F]_{\Gamma}$ is defined by 
$$[F({\rm x})]_{\Gamma}:=\left.F({\rm x})\right|_{\overline{\Omega^-}}-\left.F({\rm x})\right|_{\overline{\Omega^+}}\quad\mbox{for}\quad {\rm x}\in\Gamma.$$
\end{itemize}

Let ${\bm \tau}$ be a tangential vector field on $\Gamma$. Due to $[\rho{\bf u}\cdot{\bf n}]_{\Gamma}=0$ in \eqref{RH1}, the condition \eqref{RH2} can be rewritten as
\begin{equation}\label{RH4}
\rho({\bf u}\cdot{\bf n})[{\bf u}\cdot{\bm \tau}]_{\Gamma}=0,\quad [\rho({\bf u}\cdot{\bf n})^2+p]_{\Gamma}=0.
\end{equation}

Suppose that $\rho>0$ in $\Omega$. Then, the first condition in \eqref{RH4} holds if either ${\bf u}\cdot{\bf n}=0$ holds on $\Gamma$, or $[{\bf u}\cdot{\bm \tau}]_{\Gamma}=0$.

\begin{definition}
\label{definition-wsol}

We define ${\bf U}=({\bf u}, \rho, p)$ to be a weak solution to \eqref{E-S} in $\Omega$ with a {\emph{contact discontinuity $\Gamma$}} if the following properties hold:
\begin{itemize}
\item[(i)] $\Gamma$ is a non-self-intersecting $C^1$-curve dividing $\Omega$ into two open subsets $\Omega^{\pm}$ such that $\Omega=\Omega^+\cup\Gamma\cup \Omega^-$;

\item[(ii)] ${\bf U}$ satisfies $(w_1)$ and $(w_2)$, or equivalently 
$(w_1^*)$ and $(w_2^*)$;

\item[(iii)] $\rho>0$ in $\overline{\Omega}$;

\item[(iv)] $\left({\bf u}|_{\overline{\Omega^-}\cap \Gamma}-{\bf u}|_{\overline{\Omega^+}\cap \Gamma}\right)(\rm x)\neq {\bf 0}$ holds for all ${\rm x}\in \Gamma$;

\item[(v)] ${\bf u}\cdot{\bf n}|_{\overline{\Omega^-}\cap \Gamma}={\bf u}\cdot{\bf n}|_{\overline{\Omega^+}\cap \Gamma}=0$, where ${\bf n}$ is a unit normal vector field on $\Gamma$.

\end{itemize}
\end{definition}

One can directly check from \eqref{RH1} and \eqref{RH4} that ${\bf U}=({\bf u}, \rho, p)$ is a weak solution to \eqref{E-S} in $\Omega$ with a contact discontinuity $\Gamma$ if and only if the following properties hold:
\begin{itemize}
\item[$(i')$] The properties (i)-(iv) stated in Definition \ref{definition-wsol} hold;
\item[$(ii')$] $[p]_{\Gamma}=0$ and  ${\bf u}\cdot{\bf n}=0$ on $\Gamma$.
\end{itemize}

\begin{figure}[!h]
\centering
\includegraphics[width=0.65\columnwidth]{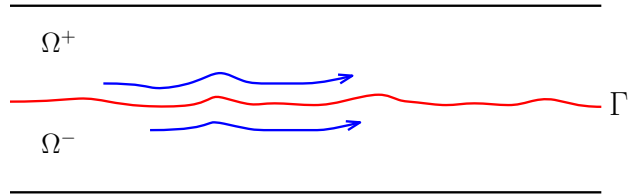}
\caption{Contact discontinuity}
\end{figure}

The goal of this paper is to prove the existence of subsonic weak solutions to \eqref{E-S} with contact discontinuities in the sense of Definition \ref{definition-wsol} in a two-dimensional infinitely long nozzle, and to analyze asymptotic behaviors of the contact discontinuities at far field. 
There are many studies of smooth subsonic solutions to Euler system, see \cite{C, Ch-D-X, D-D, D-X-X, D-X-Y,  D-Lu, D-L, X-X,  X-X-2, X-X-1} and references cited therein.
As far as we know, there are few results on the existence of solutions to Euler system with contact discontinuities \cite{B, Ch-H,Ch-K-Y1, Ch-K-Y2,W-Y}.
In \cite{B}, the existence of subsonic solutions with zero vorticity and contact discontinuities in two dimensional nozzles was established. 
In \cite{Ch-H}, the existence of subsonic solutions with large vorticity and characteristic discontinuities (vortex sheets or entropy waves) in two dimensional nozzles was proved.
In \cite{Ch-K-Y1,Ch-K-Y2}, transonic characteristic discontinuities in two dimensional nozzles were studied.
In \cite{W-Y}, supersonic contact discontinuities in three-dimensional isentropic steady flows were studied.

In this paper, we prove the existence of a subsonic weak solution $({\bf u}, \rho, p)$ to steady Euler system in a two-dimensional infinitely long nozzle when prescribing the value of the entropy $(=\frac{p}{\rho^{\gamma}})$ at the entrance by a piecewise $C^2$ function with a discontinuity at a point. Due to the variable entropy condition with a discontinuity at the entrance, the corresponding solution has a nonzero vorticity and contains a contact discontinuity $x_2=g_D(x_1)$. We construct such a solution via Helmholtz decomposition. By using Helmholtz decomposition, smooth subsonic flows for the full Euler-Poisson system with nonzero vorticity were studied in \cite{B-D-X,B-W}.
The challenge of this work, however, is to decompose the Rankine-Hugoniot conditions on the contact discontinuity via Helmholtz decomposition. Such a decomposition should be done carefully so that the compactness of approximated solutions can be achieved. To analyze the asymptotic behavior of the solution, we use the stream function formulation and energy estimates. 




The rest of the paper is organized as follows.
In Section \ref{sec-results}, we formulate the main problem of this paper, and state its solvability(Theorem \ref{3D-inf-Thm}(a)) and the asymptotic limit of the solution (Theorem \ref{3D-Far-Thm}(b)) as the main theorem.
In Section \ref{3D-Hel}, we use the method of Helmholtz decomposition to
reformulate the problem introduced in Section \ref{sec-results}, and state its solvability as Theorem \ref{3D-HD}. Then, we prove that Theorem \ref{3D-HD} implies Theorem \ref{3D-inf-Thm}(a) stated in  Section \ref{sec-results}. As we shall see later, the problems given in Sections  \ref{sec-results} and \ref{3D-Hel} are free boundary problems in an unbounded domain. To construct a solution to the free boundary problems in an unbounded domain, free boundary problems in cut-off domains will be formulated and solved  in  Section \ref{sec-cut}. 
Based on the results of Section \ref{sec-cut}, we prove Theorem \ref{3D-HD} from which Theorem \ref{3D-inf-Thm}(a) follows. Finally, the asymptotic behavior of the solution at far field is analyzed in Section \ref{sec-ex}.

\section{Main Theorems}\label{sec-results}
We define an infinite nozzle
\begin{equation}
\label{definition-N}
\mathcal{N}:=\left\{(x_1,x_2)\in\mathbb{R}^2:\mbox{ }x_1>0,\mbox{ }-1<{x_2}<1\right\}.
\end{equation}
The wall $\Gamma_{\rm w}$, upper wall $\Gamma_{\rm w}^+$, entrance $\Gamma_{\rm en}$, and a part of the entrance $\Gamma_{\rm en}^+$ of the nozzle $\mathcal{N}$ are defined as 
\begin{equation*}
\begin{split}
&\Gamma_{\rm w}:=\partial\mathcal{N}\cap\{x_2=1,-1\},\quad
\Gamma_{\rm w}^+:=\Gamma_{\rm w}\cap\{x_2=1\},\\
&\Gamma_{\rm en}:=\partial\mathcal{N}\cap\{x_1=0\},\quad
\Gamma_{\rm en}^+:=\Gamma_{\rm en}\cap\{0\le x_2\le 1\}.
\end{split}
\end{equation*}
We consider two layers of flow in $\mathcal{N}$ separated by the line $x_2=0$ with satisfying the following properties:
\begin{itemize}
\item[(i)] For fixed $\rho_0^{\pm}>0$ and $u_0>0$,  the velocity and density of top and bottom layers are given by  $(u_0,0)$, $\rho_0^+$ and $(0,0),$ $\rho_0^-$, respectively;
\item[(ii)] The pressure of both top and bottom layers is given by a constant $p_0>0$;
\item[(iii)] The flows in top and bottom layers are subsonic, i.e.,
$${u_0}<c_0\quad\mbox{for the sound speed}\quad c_0=\sqrt{\frac{\gamma p_0}{\rho_0^+}}.$$
\end{itemize}
Then a piecewise constant vector
\begin{equation*}
U_0(x_1,x_2):=\left\{
\begin{split}
(u_0,0,\rho_0^+,p_0)\quad&\mbox{for}\quad {x_2}>0,\\
(0,0,\rho_0^-,p_0)\quad&\mbox{for}\quad{x_2}<0,
\end{split}\right.
\end{equation*}
 is a weak solution of the Euler system \eqref{E-S} with a contact discontinuity on the line $x_2=0$.
In this case, the entropy $S_0$ and the Bernoulli function $B_0$ are piecewise constant functions with
\begin{equation}\label{def-B0}
\begin{split}
&S_0(x_1,x_2)=\left\{\begin{split}
	\frac{p_0}{(\rho_0^+)^{\gamma}}=:S_0^+\quad&\mbox{for}\quad x_2>0,\\
	\frac{p_0}{(\rho_0^-)^{\gamma}}=:S_0^-\quad&\mbox{for}\quad x_2<0,\end{split}\right.\\
& B_0(x_1,x_2)=\left\{\begin{split}
	\frac{1}{2}u_0^2+\frac{\gamma p_0}{(\gamma-1)\rho_0^+}=:B_0^+\quad&\mbox{for}\quad x_2>0,\\
	\frac{\gamma p_0}{(\gamma-1)\rho_0^-}=:B_0^-\quad&\mbox{for}\quad x_2<0.\end{split}\right.
\end{split}
\end{equation}

Our main concern is to solve the following problem.
\begin{problem}\label{3D-Prob1}
Fix $\epsilon\in(0,1/10)$ and $\alpha\in(0,1)$.
For given functions 
$S_{\rm en}=S_{\rm en}(x_2)$, $v_{\rm en}=v_{\rm en}(x_2)$ on $\Gamma_{\rm en}$, suppose that they satisfy
\begin{equation}
\label{en-epsilon}
\begin{split}
&(S_{\rm en},v_{\rm en})\equiv(S_0^-,0)\quad\mbox{on}\quad \Gamma_{\rm en}\backslash\Gamma_{\rm en}^+, \\
&v_{\rm en}\equiv0\quad\mbox{on}\quad \Gamma_{\rm en}^{\epsilon}:=\Gamma_{\rm en}^+\setminus\{\epsilon<x_2<1-\epsilon\},
\end{split}
\end{equation}
and 
\begin{equation*}\label{3D-rem11}
\sigma:=\|S_{\rm en}-S_0\|_{2,\alpha,\Gamma_{\rm en}^+}+\|v_{\rm en}\|_{1,\alpha,\Gamma_{\rm en}^+}\le \sigma_0
\end{equation*}
for a sufficiently small constant $\sigma_0>0$ to be specified later.

Find a weak solution $U=({\bf u},\rho,p)$ to \eqref{E-S} with a contact discontinuity 
\begin{equation*}
\Gamma_{g_D}:x_2=g_D(x_1)
\end{equation*} 
in the sense of Definition \ref{definition-wsol}  in $\mathcal{N}$  such that 
\begin{itemize}
\item[(a)]  $g_D(0)=0$.
\item[(b)] Subsonicity: $${|{\bf u}|}<c\quad\mbox{for the sound speed}\quad c=\sqrt{\frac{\gamma p}{\rho}}\quad\mbox{in}\quad\overline{\mathcal{N}}.$$
\item[(c)] Positivity of density: $\rho>0$ {in} $\overline{\mathcal{N}}.$
\item[(d)] At the entrance $\Gamma_{\rm en}$, $U$ satisfies the boundary conditions
\begin{equation*}\label{3D-BC-ent1}
\begin{split}
\frac{p}{\rho^{\gamma}}=S_{\rm en},\quad {\bf u}\cdot{\bf e}_2=v_{\rm en}\quad\mbox{on}\quad \Gamma_{\rm en}.
\end{split}
\end{equation*}
\item[(e)] On $\Gamma_{g_D}$, 
$U$ satisfies the Rankine-Hugoniot conditions, i.e.,
\begin{equation*}\label{3D-BC-Cont1}
 [p]_{\Gamma_{g_D}}=0,\quad{\bf u}\cdot {\bf n}_{g_D}=0\quad\mbox{on}\quad \Gamma_{g_D},
\end{equation*}
where ${\bf n}_{g_D}$ denotes a unit normal vector field on $\Gamma_{g_D}$.

\item[(f)] On the wall $\Gamma_{\rm w}$,
$U$ satisfies the slip boundary condition, i.e., 
$${\bf u}\cdot{\bf e}_2=0\quad\mbox{on}\quad\Gamma_{\rm w}.$$
\item[(g)] The Bernoulli function $B$ is a piecewise constant function,
\begin{equation*}
B(x_1,x_2)=\left\{\begin{split}
&B_0^+\quad\mbox{for}\quad x_2>g_D(x_1),\\
&B_0^-\quad\mbox{for}\quad x_2<g_D(x_1),
\end{split}\right.
\end{equation*}
where $B_0^{\pm}$ are given by \eqref{def-B0}.
\end{itemize}
\end{problem}


One can easily see that ${\bf u}={\bf 0}$, $\rho=\rho_0^-$, $p=p_0$ satisfy the following properties:
\begin{itemize}
\item[(i)] (Subsonicity) $|{\bf u}|=0<\sqrt{\gamma p/\rho}=\sqrt{(\gamma p_0)/\rho_0^-};$
\item[(ii)] (Positivity of density) $\rho_0^->0$;
\item[(iii)] As in \eqref{def-B0},
$$\frac{p_0}{(\rho_0^-)^{\gamma}}=S_0^-\quad\mbox{and}\quad \frac{\gamma p_0}{(\gamma-1)\rho_0^-}=B_0^-;$$
\item[(iv)] ${\bf u}\cdot{\bf v}=0$ for any vector ${\bf v}\in\mathbb{R}^2$.
\end{itemize}
From this observation, we fix 
${\bf u}={\bf 0}$, $\rho=\rho_0^-$, $p=p_0$ in $\mathcal{N}\cap\{x_2<g_D(x_1)\}$, 
 and we solve the following free boundary problem to find a solution to Problem \ref{3D-Prob1}.

\begin{figure}[!h]\label{Fig 2D-inf}
\centering
\includegraphics[width=0.99\columnwidth]{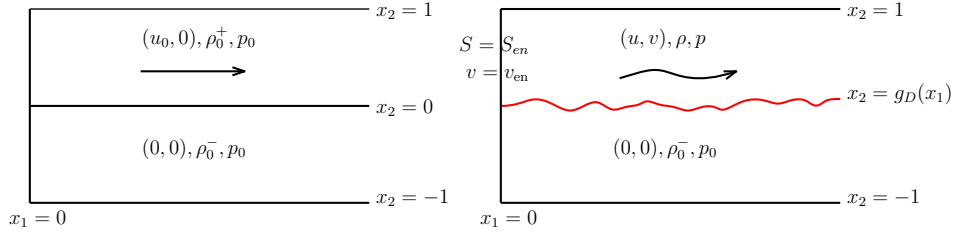}
\caption{Left: Background state, Right: Problem \ref{3D-Prob}}
\end{figure}

\begin{problem}\label{3D-Prob}
Under the same assumptions of Problem \ref{3D-Prob1}, find a function $g_D$ and a $C^1$ solution $U=({\bf u},\rho,p)$ to \eqref{E-S} in $\mathcal{N}_{g_D}^+:=\mathcal{N}\cap\{x_2>g_D(x_1)\}$ such that
\begin{itemize}
\item[(a)] 
\begin{equation}\label{g-00}
g_D(0)=0.
\end{equation}
\item[(b)] Subsonicity: $${|{\bf u}|}<c\quad\mbox{for the sound speed}\quad c=\sqrt{\frac{\gamma p}{\rho}}\quad\mbox{in}\quad\overline{\mathcal{N}^+_{g_D}}.$$
\item[(c)] Positivity of density: $\rho>0$ {in} $\overline{\mathcal{N}^+_{g_D}}.$
\item[(d)] At the entrance $\Gamma_{\rm en}^+$, $U$ satisfies the boundary conditions
\begin{equation}\label{3D-BC-ent}
\begin{split}
\frac{p}{\rho^{\gamma}}=S_{\rm en},\quad {\bf u}\cdot{\bf e}_{2}=v_{\rm en}\quad\mbox{on}\quad \Gamma_{\rm en}^+.
\end{split}
\end{equation}

\item[(e)] On $\Gamma_{g_D}:x_2=g_D(x_1)$, 
$U$ satisfies the boundary conditions
\begin{equation}\label{3D-BC-Cont}
 p=p_0,\quad{\bf u}\cdot {\bf n}_{g_D}=0\quad\mbox{on}\quad \Gamma_{g_D},
\end{equation}
where ${\bf n}_{g_D}$ denotes a unit normal vector field on $\Gamma_{g_D}$.

\item[(f)] On the upper wall $\Gamma_{\rm w}^+$,
$U$ satisfies the slip boundary condition:
\begin{equation}\label{2D-slip}
{\bf u}\cdot{\bf e}_2=0\quad\mbox{on}\quad\Gamma_{\rm w}^+.
\end{equation}
\item[(g)] For $B_0^+$ is given by \eqref{def-B0}, it holds that
$$B({\rm x})=B_0^+ \quad\mbox{in} \quad \overline{\mathcal{N}_{g_D}^+}.$$

\end{itemize}
\end{problem}


Now we state the main theorem of this paper.

\begin{theorem} For given functions 
$S_{\rm en}=S_{\rm en}(x_2)$, $v_{\rm en}=v_{\rm en}(x_2)$ on $\Gamma_{\rm en}$, suppose that they satisfy \eqref{en-epsilon}.
\begin{itemize}
\item[(a)] (Existence)\label{3D-inf-Thm}
 For any fixed $\alpha\in(0,1)$, there exists a small constant $\sigma_1>0$ depending only on $(u_0,\rho_0^+,p_0,S_0^+)$ and $\alpha$ so that if
\begin{equation*}
\sigma=\|S_{\rm en}-S_0\|_{2,\alpha,\Gamma_{\rm en}^+}+\|v_{\rm en}\|_{1,\alpha,\Gamma_{\rm en}^+}\le \sigma_1,
\end{equation*}
 then there exists a solution $U=({\bf u},\rho,p)$ of Problem \ref{3D-Prob} with a contact discontinuity $x_2=g_D(x_1)$ satisfying 
\begin{equation}\label{Th3-est}
\|g_D\|_{2,\alpha,\mathbb{R}^+}+
\|({\bf u},\rho,p)-({\bf u}_0,\rho_0^+,p_0)\|_{1,\alpha,\mathcal{N}^+_{g_D}}
\le C\sigma\quad\text{for ${\bf u}_0:=(u_0,0)$},
\end{equation}
where the constant $C>0$ depends only on $(u_0,\rho_0^+,p_0,S_0^+)$ and $\alpha$.


\item[(b)] (Asymptotic state) \label{3D-Far-Thm}
There exists a constant $\sigma_2\in(0,\sigma_1]$ depending only on $(u_0,\rho_0^+,p_0,S_0^+)$  and $\alpha$ so that if
$${\sigma}\le\sigma_2,$$
then the solution $U=({\bf u},\rho,p)$ in (a) satisfies
\begin{equation*}
\begin{split}
&\lim_{L\to \infty}\|{\bf u}\cdot {\bf e}_2(x_1,\cdot)\|_{C^1(\mathcal{N}\cap\{x_1>L\})}=0,\quad
\lim_{L\to \infty}\|p(x_1,\cdot)-p_0\|_{C^1(\mathcal{N}\cap\{x_1>L\})}=0.
\end{split}
\end{equation*}
\end{itemize}
\end{theorem}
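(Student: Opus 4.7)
The plan is to follow the strategy indicated in the introduction: reformulate the Euler system via a Helmholtz decomposition as a quasilinear elliptic equation coupled with transport equations, solve the resulting free boundary problem on bounded cut-off subdomains by a fixed-point iteration, pass to the limit as the cut-off length tends to infinity to obtain part (a), and finally analyze the far-field limit through a stream function energy argument to obtain part (b).

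For the reformulation in $\mathcal{N}^+_{g_D}$, I would write ${\bf u} = \nabla\varphi + {\bf W}$, with ${\bf W}$ chosen to carry the vorticity generated by the nontrivial entropy profile and $\varphi$ the potential part. Since $B\equiv B_0^+$ in $\mathcal{N}^+_{g_D}$, Bernoulli's law recovers $\rho$ (hence $p$) as a pointwise function of $(|{\bf u}|^2,S)$; entropy is transported by the flow with data $S_{\rm en}$ on $\Gamma_{\rm en}^+$, and Crocco's relation combined with $B\equiv $ const ties the vorticity to $\nabla S$ along streamlines, so that ${\bf W}$ becomes a nonlocal functional of $(\varphi, g_D)$ once the entrance data are fixed. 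Substituting into $\mbox{div}(\rho{\bf u})=0$ yields a quasilinear elliptic equation for $\varphi$ in the unknown domain $\mathcal{N}^+_{g_D}$, with Dirichlet-type entrance data coming from $v_{\rm en}$, a slip-type condition on $\Gamma_{\rm w}^+$, and a Dirichlet condition on $\Gamma_{g_D}$ extracted from $p = p_0$ via Bernoulli. The free boundary $g_D$ itself is recovered by integrating ${\bf u}\cdot{\bf n}_{g_D}=0$ from $g_D(0)=0$.

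To make this rigorous, truncate the nozzle to $\mathcal{N}_L:=\mathcal{N}\cap\{x_1<L\}$ for each $L>1$, prescribe an artificial exit condition compatible with the background state, and run a Schauder-type fixed-point iteration in a closed ball of radius $O(\sigma)$ in a weighted $C^{2,\alpha}$ space centered at the background state. Given an iterate $(\tilde\varphi,\tilde g_D)$, one flattens the free boundary, solves the linearized elliptic problem for a new $\varphi$, transports $S$ along the new streamlines to reconstruct ${\bf W}$, and updates $g_D$ by quadrature from ${\bf u}\cdot{\bf n}_{g_D}=0$. Smallness of $\sigma$ together with standard Schauder estimates should yield a contraction with $L$-independent bounds of the form \eqref{Th3-est}; a diagonal subsequence in $C^{1,\alpha/2}_{\rm loc}$ as $L\to\infty$ then produces the global solution in $\mathcal{N}_{g_D}^+$. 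The main obstacle is the precise decomposition of the Rankine-Hugoniot condition $p = p_0$ on $\Gamma_{g_D}$ into a boundary condition on $\varphi$ that is compatible with the nonlocal dependence of ${\bf W}$ on the iterate: on $\Gamma_{g_D}$ the tangential component of ${\bf u}$ carries the full jump and the Bernoulli identity couples $|{\bf u}|^2$ to $\rho$, so one has to isolate a genuinely lower-order perturbation of the potential part to avoid losing a derivative when closing the contraction. This is evidently the step advertised by the authors as the technical core of the paper.

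For part (b), I would use the stream function $\psi$ in $\mathcal{N}^+_{g_D}$ defined by $\rho{\bf u} = (\psi_{x_2},-\psi_{x_1})$. The boundary conditions ${\bf u}\cdot{\bf n}_{g_D}=0$ and ${\bf u}\cdot{\bf e}_2=0$ on $\Gamma_{\rm w}^+$ force $\psi$ to be constant on each of $\Gamma_{g_D}$ and $\Gamma_{\rm w}^+$, while $S = S(\psi)$ together with $B\equiv B_0^+$ turns $\mbox{div}(\rho{\bf u})=0$ into a semilinear elliptic equation for $\psi$. Comparing $\psi$ with the one-dimensional limit profile $\psi_\infty(x_2)$ that solves the analogous problem on the limit half-strip $\{g_D(\infty)<x_2<1\}$, testing the equation for $\psi-\psi_\infty$ against a smooth longitudinal cut-off times $\psi-\psi_\infty$, and applying Poincaré's inequality on vertical slices, one obtains an exponentially decaying energy estimate in $L$. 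Interior elliptic regularity upgrades this decay to $C^1$, which via Bernoulli and the definition of $\psi$ yields the claimed $C^1$-convergence of ${\bf u}\cdot{\bf e}_2$ and $p-p_0$ to zero on $\mathcal{N}\cap\{x_1>L\}$ as $L\to\infty$.
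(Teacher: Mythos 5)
Your high-level strategy matches the paper's: Helmholtz-type decomposition, free boundary problem in cut-off domains solved by Schauder fixed-point iteration, diagonal passage to the limit for (a), and a stream-function energy estimate for (b). However, there are genuine gaps in both parts.

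For part (a), the key difficulty is not merely acknowledged but actually must be resolved, and you do not resolve it. You write ${\bf u}=\nabla\varphi+{\bf W}$ with ${\bf W}$ a ``nonlocal functional'' of $(\varphi,g_D)$ reconstructed from Crocco's relation, and then extract only ``a Dirichlet condition on $\Gamma_{g_D}$ for $\varphi$'' from $p=p_0$. This is where the argument stalls. The paper instead treats ${\bf u}=\nabla\varphi+\nabla^\perp\psi$ with $\psi$ an \emph{independent} unknown solving a Poisson equation $\Delta\psi=G(S,\partial_{x_2}S,\nabla\varphi,\nabla\psi)$, and the essential point is that the single scalar condition $p=p_0$ on $\Gamma_{g_D}$ must be split into two conditions, one for each of $\varphi$ and $\psi$: an oblique/tangential condition $\nabla\varphi\cdot\bm{\tau}_{g_D}=\nabla\varphi_0\cdot\bm{\tau}_{g_D}$ (equivalently $\phi:=\varphi-\varphi_0$ is constant along $\Gamma_{g_D}$) and a Neumann condition $\nabla\psi\cdot{\bf n}_{g_D}=\mathcal{A}(g_D,S)$ with $\mathcal{A}$ explicitly chosen from the Bernoulli identity so that the pair of conditions reproduces $p=p_0$ via $|{\bf u}|^2=|{\bf u}\cdot\bm{\tau}_{g_D}|^2$. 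Without that specific split, the two elliptic problems for $\varphi$ and $\psi$ are not properly determined and the contraction cannot be closed; you correctly flag that this is ``the technical core of the paper,'' but flagging a gap is not filling it. A further structural mismatch: the paper runs a two-layer iteration (an inner fixed point for $(f,\varphi,\psi)$ at frozen entropy $S_\ast$, an outer Schauder fixed point over $S_\ast$), with extra compatibility conditions ($\partial_{x_1}S\equiv 0$, $\partial_{x_1x_1}\phi\equiv 0$ on $\Gamma_{\rm en}^\epsilon\cup\Gamma_{\rm ex}^L$) built into the iteration sets precisely so that reflection across the artificial boundaries yields $L$-independent $C^{2,\alpha}$ estimates. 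Your proposal omits these compatibility requirements, and without them the Schauder constants do not stay bounded as $L\to\infty$.

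For part (b), your plan is also different from the paper's and not complete as stated. The paper does not construct a one-dimensional limit profile $\psi_\infty$ nor prove exponential decay; it sets $\mathfrak{h}(x_1,x_2)=\int_1^{x_2}\rho u_1\,dt$, differentiates the divergence-form equation once in $x_1$ to get a conormal elliptic equation for $\omega:=\partial_{x_1}\mathfrak{h}$, tests against $\eta^2\omega$ with a longitudinal cut-off, and shows $\int_0^L\int|\nabla\omega|^2\le C\int_L^{L+1}\int|\nabla\omega|^2+CE$. Combined with the $L$-independent a priori bound $|\nabla\omega|\le C$ from part (a), this gives $\int_0^\infty\int|\nabla\omega|^2<\infty$, hence $\int_L^{L+1}\int|\nabla\omega|^2\to 0$, and $C^{1,\alpha}$ regularity upgrades this to $\|\nabla\omega(x_1,\cdot)\|_{C^0}\to 0$; then $u_2$ and $\partial_{x_2}p$ follow. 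The advantage of working with $\omega$ rather than $\psi-\psi_\infty$ is that $\omega$ vanishes on $\Gamma^+_{\rm w}$ (by the slip condition) and satisfies a natural conormal condition on $\Gamma_{g_D}$, which is what lets Poincar\'e on vertical slices close the estimate without ever identifying the asymptotic state. Your proposal requires knowing $\psi_\infty$ in advance, which in turn requires knowing $\lim_{x_1\to\infty}g_D(x_1)$ and the limiting entropy distribution --- information that is actually a \emph{consequence} of the argument, not an input. The claim of exponential decay is also unproved; a hole-filling or Saint-Venant argument would be needed, and the paper gets by without any rate.
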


\section{Reformulation of Problem \ref{3D-Prob} via Helmholtz decomposition}\label{3D-Hel}
For ${\bf r}=(r_1,r_2)\in\mathbb{R}^2$, set ${\bf r}^{\bot}=(r_2,-r_1)$.
We express the velocity vector field ${\bf u}$ as 
\begin{equation*}\label{2D-HH}
{\bf u}=\nabla\varphi+\nabla^{\bot}\psi,
\end{equation*}
to  rewrite
the system \eqref{E-S} 
as the following nonlinear system for $(S,\varphi,\psi)$: 
\begin{equation}\label{H-E-S}
\left\{\begin{split}
&\mbox{div}\, {\mathbb F}(S,\nabla\varphi,\nabla\psi)=0,\\
&\Delta\psi=G(S,\partial_{x_2} S,\nabla\varphi,\nabla\psi),\\
&H\left(S,\nabla\varphi,\nabla\psi\right)(\nabla\varphi+\nabla^{\bot}\psi)\cdot\nabla S=0,
\end{split}\right.
\end{equation}
for ${\mathbb F}$, $G$, and $H$ defined by 
\begin{equation}\label{2D_FGH}
\left.\begin{split}
&H(\xi,{\bf q},{\bf r}):=\left[\frac{(\gamma-1)\left(B_0^+-\frac{1}{2}|{\bf q}+{\bf r}^{\bot}|^2\right)}{\gamma\xi}\right]^{1/(\gamma-1)},\\
&{\mathbb F}(\xi, {\bf q},{\bf r}):=H(\xi,{\bf q},{\bf r})({\bf q}+{\bf r}^{\bot}),\quad
G(\xi,\eta,{\bf q},{\bf r}):=-\frac{\eta H^{\gamma-1}(\xi,{\bf q},{\bf r})}{(\gamma-1)(q_1+r_2)},
\end{split}
\right.
\end{equation}
for $\xi,\eta\in\mathbb{R}$, ${\bf q}=(q_1,q_2),{\bf r}=(r_1,r_2)\in\mathbb{R}^2$.

Next, we derive boundary conditions for $(g_D,S,\varphi,\psi)$ to satisfy the physical boundary conditions \eqref{3D-BC-ent}-\eqref{3D-BC-Cont}. In particular, the boundary conditions are derived so that we obtain a compactness of approximated solutions of the free boundary problem given in terms of $(g_D,S,\varphi,\psi)$ right below.

The first step is to rewrite the boundary conditions \eqref{3D-BC-ent}-\eqref{3D-BC-Cont} in terms of $(g_D,S,\varphi,\psi)$ directly.
The boundary conditions \eqref{3D-BC-ent} on $\Gamma_{\rm en}^+$ become
\begin{eqnarray}
\nonumber S=S_{\rm en}\quad&\mbox{on}&\Gamma_{\rm en}^+,\\
\label{ent-BC}
(\nabla\varphi+\nabla^{\bot}\psi) \cdot{\bf e}_2=v_{\rm en}\quad&\mbox{on}&\Gamma_{\rm en}^+.
\end{eqnarray}
We prescribe boundary conditions for $(\varphi,\psi)$ on $\Gamma_{\rm en}^+$ as 
\begin{equation}\label{def-phi-en}
\varphi(x_2)=\int_0^{x_2}v_{\rm en}(t)dt(=:\varphi_{\rm en})\quad\mbox{and}\quad \partial_{x_1}\psi=0
\end{equation}
so that the boundary condition \eqref{ent-BC} holds.
The Rankine-Hugoniot conditions \eqref{3D-BC-Cont} become
\begin{eqnarray}
\label{con-p-BC}
S H^{\gamma}\left(S,\nabla\varphi,\nabla\psi\right)=p_0\quad&\mbox{on}&\Gamma_{g_D},\\
\label{con-n-BC}
(\nabla\varphi+\nabla^{\bot}\psi) \cdot{\bf n}_{g_D}=0\quad&\mbox{on}&\Gamma_{g_D},\quad\mbox{where}\quad{\bf n}_{g_D}=\frac{-g_D'(x_1){\bf e}_1+{\bf e}_2}{\sqrt{1+|g_D'(x_1)|^2}}.
\end{eqnarray}
From  \eqref{g-00} and \eqref{con-n-BC}, we get the following equations for $g_D$:
\begin{equation}\label{Free-BC}
\left\{\begin{split}
&g_D'(x_1)=\frac{(\nabla\varphi+\nabla^{\bot}\psi)\cdot{\bf e}_2}{(\nabla\varphi+\nabla^{\bot}\psi)\cdot{\bf e}_1}(x_1,g_D(x_1))\quad\mbox{for}\quad x_1>0,\\
&g_D(0)=0.
\end{split}\right.
\end{equation}
We use \eqref{Free-BC} to find the location of the contact discontinuity $x_2=g_D(x_1)$.
Since ${\bf u}\cdot{\bf n}_{g_D}=0$ is imposed on $\Gamma_{g_D}$, we have 
\begin{equation}\label{ab-u}
|{\bf u}|^2=|{\bf u}\cdot{\bm\tau}_{g_D}|^2
=\left|(\nabla\varphi+\nabla^{\bot}\psi) \cdot{\bm\tau}_{g_D}\right|^2\quad\mbox{on}\quad\Gamma_{g_D},
\end{equation}
where ${\bm\tau}_{g_D}$ is the unit tangential vector field of $\Gamma_{g_D}$.
Due to the condition (g) stated in Problem \ref{3D-Prob}, the definition of the Bernoulli invariant \eqref{Ber} gives
\begin{equation}\label{3D-u22}
\begin{split}
|{\bf u}|^2
&=2\left(B_0^+-\frac{\gamma p^{1-1/\gamma}S^{1/\gamma}}{\gamma-1}\right)\quad\mbox{on}\quad\Gamma_{g_D}.
\end{split}
\end{equation}
We prescribe boundary conditions for $(\varphi,\psi)$ on $\Gamma_{g_D}$ as 
\begin{equation}\label{varphi-psi-BC}
\nabla\varphi\cdot{\bm\tau}_{g_D}=\nabla\varphi_0\cdot{\bm\tau}_{g_D}\quad\mbox{and}\quad\nabla\psi\cdot{\bf n}_{g_D}=\mathcal{A}(g_D,S)
\end{equation}
for
\begin{eqnarray}
\label{2D-varphi0}&&\varphi_0(x_1,x_2):=u_0x_1,\\
\label{A-def}&&\mathcal{A}(g_D,S):=\sqrt{2\left(B_0^+-\frac{\gamma p_0^{1-1/\gamma}S^{1/\gamma}}{\gamma-1}\right)}-\nabla\varphi_0\cdot{\bm\tau}_{g_D}.
\end{eqnarray}
If \eqref{ab-u}-\eqref{varphi-psi-BC} hold, then the physical boundary condition \eqref{con-p-BC} holds.

We collect the boundary conditions for $(g_D,S,\varphi,\psi)$ with \eqref{Free-BC} as follows:
\begin{equation}\label{2D_BC_2}
\left\{
\begin{split}
S=S_{\rm en},\quad \varphi=\varphi_{\rm en},\quad\partial_{x_1}\psi=0\quad&\mbox{on}\quad\Gamma_{\rm en}^+,\\
\partial_{x_2}\varphi=0,\quad\psi=0\quad&\mbox{on}\quad\Gamma_{\rm w}^+,\\
\nabla\varphi\cdot{\bm \tau}_{g_D}=\nabla\varphi_0\cdot{\bm\tau}_{g_D},\quad\nabla\psi\cdot{\bf n}_{g_D}=\mathcal{A}(g_D,S)\quad&\mbox{on}\quad\Gamma_{g_D}.\\
\end{split}\right.
\end{equation}

\begin{theorem}\label{3D-HD}  
For given functions 
$S_{\rm en}=S_{\rm en}(x_2)$, $v_{\rm en}=v_{\rm en}(x_2)$ on $\Gamma_{\rm en}$, suppose that they satisfy \eqref{en-epsilon}.
For any fixed $\alpha\in(0,1)$, there exists a small constant $\sigma_3>0$ depending only on $(u_0,\rho_0^+,p_0,S_0^+)$ and $\alpha$ so that if
\begin{equation}\label{enu-sigma}
\sigma=\|S_{\rm en}-S_0\|_{2,\alpha,\Gamma_{\rm en}^+}+\|v_{\rm en}\|_{1,\alpha,\Gamma_{\rm en}^+}\le\sigma_3,
\end{equation}
then the free boundary problem \eqref{H-E-S} with boundary conditions \eqref{Free-BC} and \eqref{2D_BC_2} has a solution $(g_D,S,\varphi,\psi)$
 satisfying
\begin{equation}\label{Th4-est}
\|g_D\|_{2,\alpha, \mathbb{R}^+}+\|\varphi-\varphi_0\|_{2,\alpha,\mathcal{N}^+_{g_D}}+\|\psi\|_{2,\alpha,\mathcal{N}^+_{g_D}}+\|S-S_0^+\|_{2,\alpha,\mathcal{N}^+_{g_D}}\le C\sigma,
\end{equation}
where the constant $C>0$ depends only on $(u_0,\rho_0^+,p_0,S_0^+)$ and $\alpha$.
\end{theorem}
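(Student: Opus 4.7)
\medskip

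The plan is to set up a Schauder fixed point argument for the quadruple $(g_D,S,\varphi,\psi)$, first on cut-off nozzles $\mathcal{N}_L:=\mathcal{N}\cap\{x_1<L\}$ (as the paper indicates is carried out in Section \ref{sec-cut}), then pass to the limit $L\to\infty$ via uniform $C^{2,\alpha}$ estimates and a diagonal argument. For the cut-off problem, I would fix a candidate $g_D$ in a small ball of $C^{2,\alpha}(\mathbb{R}^+)$ around $g_D\equiv 0$ with $g_D(0)=0$, and a candidate $(\tilde\varphi,\tilde\psi)$ in a small ball of $C^{2,\alpha}(\mathcal{N}^+_{g_D})$ around $(\varphi_0,0)$, all of radius $M\sigma$ for $M$ to be chosen large. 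After flattening the free boundary via $(x_1,x_2)\mapsto(x_1,\frac{x_2-g_D(x_1)}{1-g_D(x_1)})$ to the fixed strip, each iteration step produces a new quadruple.

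The iteration would be run in four substeps. First, with velocity $\tilde{\bf u}:=\nabla\tilde\varphi+\nabla^\bot\tilde\psi$, solve the transport equation $\tilde{\bf u}\cdot\nabla S=0$ with datum $S=S_{\rm en}$ on $\Gamma_{\rm en}^+$; smallness of $(\tilde\varphi-\varphi_0,\tilde\psi)$ makes $\tilde{\bf u}$ close to $(u_0,0)$, so characteristics flow from the entrance toward the exit and $S$ is uniquely defined and $C^{2,\alpha}$ up to the boundary. Second, solve the quasilinear elliptic equation $\mathrm{div}\,\mathbb{F}(S,\nabla\varphi,\nabla\tilde\psi)=0$ for $\varphi$ with $\varphi=\varphi_{\rm en}$ on $\Gamma_{\rm en}^+$, $\partial_{x_2}\varphi=0$ on $\Gamma_{\rm w}^+$, and $\nabla\varphi\cdot{\bm\tau}_{g_D}=\nabla\varphi_0\cdot{\bm\tau}_{g_D}$ on $\Gamma_{g_D}$; since the last condition together with $\varphi(0,0)$ fixed integrates to a Dirichlet datum, and subsonicity near the background makes the operator uniformly elliptic, standard mixed boundary value theory for quasilinear elliptic equations yields a unique solution with $C^{2,\alpha}$ estimates. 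Third, solve the Poisson-type equation $\Delta\psi=G(S,\partial_{x_2}S,\nabla\varphi,\nabla\tilde\psi)$ with $\psi=0$ on $\Gamma_{\rm w}^+$, $\partial_{x_1}\psi=0$ on $\Gamma_{\rm en}^+$, and $\nabla\psi\cdot{\bf n}_{g_D}=\mathcal{A}(g_D,S)$ on $\Gamma_{g_D}$; this is a mixed Dirichlet/Neumann problem on a Lipschitz domain, solvable by variational methods after checking integral compatibility on the cut-off. Fourth, update the free boundary by solving the ODE \eqref{Free-BC} with $g_D(0)=0$; here one uses that $(\nabla\varphi+\nabla^\bot\psi)\cdot{\bf e}_1$ stays bounded below away from zero by subsonicity and smallness.

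The main obstacle, as the authors flag in the introduction, is the Rankine--Hugoniot decomposition on $\Gamma_{g_D}$: the Neumann datum $\mathcal{A}(g_D,S)$ for $\psi$ couples $g_D$ and the trace of $S$ on the free boundary, so one must verify that the iteration map is Hölder-continuous (and contractive in a weaker norm) despite this nonlinear coupling. Concretely, the choice \eqref{varphi-psi-BC} is designed so that the equation \eqref{3D-u22}, derived from the Bernoulli law and the pressure jump condition \eqref{con-p-BC}, is automatically verified at the fixed point, but the price is that $\partial\psi/\partial{\bf n}_{g_D}$ depends on $S|_{\Gamma_{g_D}}$ which itself depends on the previous velocity iterate. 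Closing the estimates will require (i) tracking how $S$ propagates from $\Gamma_{\rm en}^+$ to $\Gamma_{g_D}$ along characteristics in a $C^{2,\alpha}$-stable manner, (ii) using the smallness assumption \eqref{enu-sigma} to keep the Lipschitz constant of the iteration map below $1$ after one passes to a slightly weaker norm, and (iii) upgrading to the full $C^{2,\alpha}$ bound \eqref{Th4-est} via Schauder estimates for each elliptic subproblem, with constants uniform in the cut-off length $L$.

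Once the cut-off existence and the uniform estimate \eqref{Th4-est} on $\mathcal{N}_L^+$ are established, I would extract a limit $(g_D,S,\varphi,\psi)$ on the infinite nozzle by Arzelà--Ascoli: the uniform $C^{2,\alpha}$ bound on each compact subset of $\overline{\mathcal{N}}$ yields convergence in $C^{2,\alpha'}_{\rm loc}$ for $\alpha'<\alpha$, and the limit inherits the bound \eqref{Th4-est} by lower semicontinuity. The limit solves \eqref{H-E-S} classically in $\mathcal{N}^+_{g_D}$, satisfies the boundary conditions \eqref{Free-BC} and \eqref{2D_BC_2}, and has $\|g_D\|_{2,\alpha,\mathbb{R}^+}\le C\sigma$, completing the proof of Theorem \ref{3D-HD}.
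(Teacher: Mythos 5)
Your overall architecture --- a cut-off nozzle $\mathcal{N}_L$, a Schauder fixed point argument, uniform $C^{2,\alpha}$ estimates in $L$, and a diagonal limit to $L=\infty$ --- matches the paper's. But the \emph{order} of the substeps in your iteration hides a genuine gap. You solve the transport equation $\tilde{\bf u}\cdot\nabla S=0$ with data on $\Gamma_{\rm en}^+$ \emph{first}, given a candidate velocity $\tilde{\bf u}=\nabla\tilde\varphi+\nabla^\perp\tilde\psi$ and a candidate free boundary $g_D$. The trouble is that for an arbitrary iterate the curve $\Gamma_{g_D}$ need not be a streamline of $\tilde{\bf u}$: if $\tilde{\bf u}\cdot{\bf n}_{g_D}\neq 0$ somewhere on $\Gamma_{g_D}$, characteristics of the transport equation cross the free boundary, so the problem with entrance data alone is either underdetermined (where characteristics enter through $\Gamma_{g_D}$) or does not foliate the whole of $\mathcal{N}^+_{g_D}$. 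The paper flags exactly this obstruction (quoting \cite[Lemma 3.3]{B-D-X}: ``the condition \eqref{2D_BC_Fin2} must hold to be able to solve the transport equation for $S$'') and restructures the iteration to avoid it. The outermost iterate is $S_*\in\mathcal{S}(M_1)$; for fixed $S_*$ a middle-level Schauder iteration over $f_*\in\mathcal{F}(M_2)$ solves the free boundary problem \eqref{S-free}--\eqref{2D_BC_Fin2} (Lemma \ref{S-free-2}), with the innermost level being a contraction for $(\phi,\psi)$ solving a \emph{linearized} elliptic system (Lemma \ref{2D-fix}); only at that stage, once $\Gamma_{\rm cd}^{L,f}$ is guaranteed to be a streamline of $\nabla\varphi+\nabla^\perp\psi$, is the transport equation \eqref{Ite-3D2} solved for a new entropy $S$. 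That nesting is the essential device that your single-level scheme is missing.

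There is a second, smaller gap in the free boundary update. You integrate the ODE \eqref{Free-BC} directly to produce the new $g_D$. The paper instead updates via the mass-flux relation \eqref{New-f}, yielding $f$ explicitly by \eqref{2D-free-BC}; this not only gives the $C^{2,\alpha}$ estimate \eqref{f-est-c2} immediately from $C^{1,\alpha}$ control of $\rho^*u_1^*$, but also delivers for free the corner compatibility conditions $f(0)=f'(0)=f'(L)=0$ that are required for the even-reflection argument producing uniform-in-$L$ $C^{2,\alpha}$ estimates up to $\Gamma_{\rm en}^+$ and $\Gamma_{\rm ex}^L$ (the set $\mathcal{F}(M_2)$ in \eqref{f-set} encodes exactly these). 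With the raw ODE update you would have to verify these compatibility conditions separately, and they are not automatic. Finally, you propose solving the quasilinear equation $\mathrm{div}\,\mathbb{F}(S,\nabla\varphi,\nabla\tilde\psi)=0$ directly, whereas the paper linearizes to the constant-coefficient operator $\mathcal{L}(\phi)=\mathfrak{F}$ and runs a contraction; this last point is a matter of convenience rather than correctness, but the first two do need to be fixed for the argument to close.
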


Hereafter, a constant $C$ is said to be chosen depending only on the data if $C$ is chosen depending only on $(u_0,\rho_0^+,p_0,S_0^+)$.

In the following, we show that Theorem \ref{3D-inf-Thm} (a) directly follows from Theorem \ref{3D-HD}.
\begin{proof}[{\bf Proof of Theorem \ref{3D-inf-Thm} (a)}] Assume that Theorem \ref{3D-HD} holds true.
For $\sigma_3$ from Theorem \ref{3D-HD}, suppose that the functions $(v_{\rm en}, S_{\rm en})$ satisfy \eqref{enu-sigma}. 
By Theorem \ref{3D-HD}, the free boundary problem \eqref{H-E-S} with \eqref{Free-BC} and \eqref{2D_BC_2} has a solution $(g_D,S, \varphi,\psi)$ that satisfies the estimate \eqref{Th4-est}. For such a solution, we define $({\bf u}, \rho,  p)$ by 
\begin{equation*}
{\bf u}:=\nabla\varphi+\nabla^{\bot}\psi,\quad
\rho:=H(S,\nabla\varphi,\nabla\psi),\quad
p:=S H^{\gamma}(S,\nabla\varphi,\nabla\psi)\quad\text{in $\overline{\mathcal{N}_{g_D}^+}$},
\end{equation*}
for $H$ given by \eqref{2D_FGH}. It follows from the estimate \eqref{Th4-est} given in Theorem \ref{3D-HD} that $(g_D, {\bf u},\rho,p)$ satisfy the estimate \eqref{Th3-est}. Then, one can choose a small constant $\sigma_1\in(0, \sigma_3]$ depending only on the data and $\alpha$ such that if $\sigma\le \sigma_1$, then $(g_D, {\bf u},\rho,p)$ satisfy $\rho\ge \frac 12 \rho_0^+>0$ and $c-|{\bf u}|\ge \frac 12 (c_0^+-u_0^+)>0$ in $\overline{\mathcal{N}_{g_D}^+}$, thus solve Problem \ref{3D-Prob}. Here, $c_0^+$ is given by $c_0^+=\sqrt{\frac{\gamma p_0}{\rho_0^+}}$.
\end{proof}

The rest of the paper is devoted to prove Theorem \ref{3D-HD}  and Theorem \ref{3D-Far-Thm} (b). 
To prove Theorem \ref{3D-HD} by a limiting argument, we introduce a free boundary problem in a cut-off domain of the finite length $L$, and solve it by the method of iteration in the next section. More importantly, uniform estimates of
the solutions to the free boundary problems  in cut-off domains are established independently of the length $L$ in the next section. Then, we take a  sequence of the solutions to the free boundary problems, and pass to the limit $L\to \infty$ in \S \ref{subsec-ex}. Then, the limit yields a solution to the free boundary problem \eqref{H-E-S} with boundary conditions \eqref{Free-BC} and \eqref{2D_BC_2}.

\section{Free boundary problems in cut-off domains}\label{sec-cut}

\subsection{Iteration scheme}
Fix a constant $L>0$ and define $\mathcal{N}_L$ by
\begin{equation}\label{def-NL}
\mathcal{N}_L:=\mathcal{N}\cap\{0<x_1<L\}
\end{equation}
for $\mathcal{N}$ given by \eqref{definition-N}.
And, we define  $\Gamma_{\rm w}^{+,L}$ and $\Gamma_{\rm ex}^L$  by
$$\Gamma_{\rm w}^{+,L}:=\partial\mathcal{N}_L\cap\{x_2=1\},\quad \Gamma_{\rm ex}^L:=\partial\mathcal{N}_L\cap\{x_1=L\}.$$
For a function $f:[0,L]\rightarrow (-1,1)$, we set
\begin{equation*}
\begin{split}
&\mathcal{N}_{L,f}^+:=\mathcal{N}_L\cap\{x_2>f(x_1)\},\\
&\Gamma^{L,f}_{\rm ex}:=\partial\mathcal{N}_{L,f}^+\cap\{x_1=L\},\quad
\Gamma_{\rm cd}^{L,f}:=\partial\mathcal{N}_{L,f}^+\cap\{x_2=f(x_1)\}.
\end{split}
\end{equation*}

\begin{problem} \label{Prob-Cut}
Find a solution $(f,S,\varphi,\psi)$ of the following free boundary problem: 
\begin{equation}\label{3D-Cut-Eq}
\eqref{H-E-S}\quad\mbox{in}\quad\mathcal{N}_{L,f}^+
\end{equation}
 with boundary conditions
\begin{equation}\label{2D_BC_Fin}\left\{
\begin{split}
S=S_{\rm en},\quad \varphi=\varphi_{\rm en},\quad\partial_{x_1}\psi=0\quad&\mbox{on}\quad\Gamma_{\rm en}^+,\\
\partial_{x_2}\varphi=0,\quad\psi=0\quad&\mbox{on}\quad\Gamma_{\rm w}^{+,L},\\
\partial_{x_2}\varphi=0,\quad\partial_{x_1}\psi=0\quad&\mbox{on}\quad\Gamma^{L,f}_{\rm ex},\\
\nabla\varphi\cdot{\bm \tau}_f=\nabla\varphi_0\cdot{\bm\tau}_f,\quad\nabla\psi\cdot{\bf n}_{f}=\mathcal{A}(f,S)\quad&\mbox{on}\quad\Gamma_{\rm cd}^{L,f},\\
\end{split}\right.
\end{equation}
\begin{equation}\label{2D_BC_Fin2}
\left\{\begin{split}
&f'(x_1)=\frac{(\nabla\varphi+\nabla^{\bot}\psi)\cdot{\bf e}_2}{(\nabla\varphi+\nabla^{\bot}\psi)\cdot{\bf e}_1}(x_1,f(x_1))\quad\mbox{for}\quad x_1>0,\\
&f(0)=0,
\end{split}\right.
\end{equation}
where ${\bm\tau}_f$ and ${\bf n}_f$ are the unit tangential vector field and unit normal vector field of $\Gamma_{\rm cd}^{L,f}$, respectively.
\end{problem}

The goal of Section \ref{sec-cut} is to prove the following proposition.

\begin{proposition}\label{3D-Prop}
For given functions 
$S_{\rm en}=S_{\rm en}(x_2)$, $v_{\rm en}=v_{\rm en}(x_2)$ on $\Gamma_{\rm en}$, suppose that they satisfy \eqref{en-epsilon}.
For a fixed $\alpha\in(0,1)$, there exists a small constant $\sigma_4>0$ depending only on the data and $\alpha$ so that if
$$\sigma=\|S_{\rm en}-S_0\|_{2,\alpha,\Gamma_{\rm en}^+}+\|v_{\rm en}\|_{1,\alpha,\Gamma_{\rm en}^+}\le\sigma_4,$$
then Problem \ref{Prob-Cut}  has  a unique solution $(f,S,\varphi,\psi)$ satisfying
\begin{equation}\label{3D-Prop-est}
\|f\|_{2,\alpha,(0,L)}+\|\varphi-\varphi_0\|_{2,\alpha,\mathcal{N}_{L,f}^+}+\|\psi\|_{2,\alpha,\mathcal{N}_{L,f}^+}+\|S-S_0^+\|_{2,\alpha,\mathcal{N}_{L,f}^+}\le C\sigma,
\end{equation}
where the constant $C>0$ depends only on the data and $\alpha$ but independent of $L$.
\end{proposition}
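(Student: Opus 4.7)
The plan is to prove Proposition \ref{3D-Prop} by a Schauder fixed-point iteration that exploits the decoupling provided by the Helmholtz reformulation \eqref{H-E-S}. Given an approximation $(\tilde f,\tilde\varphi,\tilde\psi,\tilde S)$ close to the background state $(0,\varphi_0,0,S_0^+)$, the system splits into (i) a transport equation for $S$, (ii) a linear Poisson equation for $\psi$, and (iii) a quasilinear equation for $\varphi$ that is uniformly elliptic in the subsonic regime, which I would solve sequentially, followed by (iv) an ODE update of the free boundary $f$.

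More concretely, for constants $M>0$ and $\sigma_4>0$ to be chosen, after flattening the free boundary by the change of variables $y_1=x_1$, $y_2=(x_2-\tilde f(x_1))/(1-\tilde f(x_1))$ onto the fixed rectangle $R_L=(0,L)\times(0,1)$, I would define the iteration set
\begin{equation*}
\mathcal{K}_{M\sigma}:=\bigl\{(\tilde f,\tilde\varphi,\tilde\psi,\tilde S):\ \|\tilde f\|_{2,\alpha,(0,L)}+\|(\tilde\varphi-\varphi_0,\tilde\psi,\tilde S-S_0^+)\|_{2,\alpha,R_L}\le M\sigma,\ \tilde f(0)=0\bigr\},
\end{equation*}
and the iteration map $\mathcal{J}$ by the following four substeps: solve the transport equation for $S$ along the streamlines of $\tilde{\bf u}=\nabla\tilde\varphi+\nabla^\bot\tilde\psi$ with $S=S_{\rm en}$ on $\Gamma_{\rm en}^+$, noting that every streamline reaches the entrance because $\tilde{\bf u}$ is an $O(\sigma)$-perturbation of $(u_0,0)$; solve the linear Poisson equation for $\psi$ with source $G(\tilde S,\partial_{x_2}\tilde S,\nabla\tilde\varphi,\nabla\tilde\psi)$ under the mixed Dirichlet--Neumann conditions in \eqref{2D_BC_Fin}; solve the quasilinear elliptic equation $\mathrm{div}\,\mathbb{F}(\tilde S,\nabla\varphi,\nabla\tilde\psi)=0$ for $\varphi$ with the boundary data from \eqref{2D_BC_Fin} (the tangential-derivative condition on $\Gamma_{\rm cd}^{L,\tilde f}$ integrates to a Dirichlet condition using $\tilde f(0)=0$); and finally define the updated $f$ by integrating the ODE \eqref{2D_BC_Fin2}.

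Standard Schauder estimates applied to each subproblem bound the $C^{2,\alpha}$-norm of the new quadruple by $C(\sigma+(M\sigma)^2)$; for $M$ large relative to $C$ and $\sigma$ sufficiently small, $\mathcal{J}$ sends $\mathcal{K}_{M\sigma}$ into itself. Since $\mathcal{J}$ is compact in a weaker Hölder norm, Schauder's fixed-point theorem produces a fixed point that is a solution to Problem \ref{Prob-Cut}. Uniqueness would follow from a contraction estimate in a lower-regularity norm such as $C^{1,\alpha/2}$: the difference of two candidate solutions satisfies linearized versions of the three subproblems whose source terms are $O(\sigma)$-multiples of the difference, and can be absorbed for $\sigma$ sufficiently small.

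The main obstacle I anticipate is securing the estimate \eqref{3D-Prop-est} with a constant $C$ \emph{independent of $L$}, which is essential for the limiting argument of Section \ref{subsec-ex}. A naive global Schauder estimate on the long strip $R_L$ would carry a constant depending on $L$; instead, one must combine interior and boundary Schauder estimates on $O(1)$-sized subdomains and exploit the translation invariance in $x_1$ of the background state, stitching local $C^{2,\alpha}$ estimates into an $L$-uniform global bound. The localization hypothesis \eqref{en-epsilon} confines all nontrivial data to a bounded portion of the entrance, so the inhomogeneities decay rapidly in $x_1$ and the perturbation remains uniformly small. A secondary difficulty is compatibility at the four corners of $R_L$; the conditions $v_{\rm en}\equiv 0$ on $\Gamma_{\rm en}^{\epsilon}$ and $\partial_{x_2}\varphi=\partial_{x_1}\psi=0$ on $\Gamma_{\rm ex}^{L}$ are precisely what makes the Dirichlet and Neumann data compatible at every corner and yields $C^{2,\alpha}$ regularity up to each of them.
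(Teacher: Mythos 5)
The architecture you propose---a single Schauder iteration on the full quadruple $(\tilde f,\tilde\varphi,\tilde\psi,\tilde S)$ with sequential subproblems---differs genuinely from the paper, which uses a nested three-level scheme: an outer iteration on the entropy $S_\ast\in\mathcal{S}(M_1)$, a middle iteration on the free boundary $f_\ast\in\mathcal{F}(M_2)$ (Lemma \ref{S-free-2}), and an inner contraction on $(\varphi,\psi)$ (Lemma \ref{2D-fix}). This nesting is not a stylistic choice, and your ordering introduces a genuine gap.

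The gap lies in Substep (i). You solve the transport equation for $S$ along the streamlines of $\tilde{\bf u}=\nabla\tilde\varphi+\nabla^\bot\tilde\psi$ in the domain $\mathcal{N}_{L,\tilde f}^+$ with data only on $\Gamma_{\rm en}^+$, and assert that every streamline reaches the entrance. This requires $\tilde{\bf u}$ to be tangent to the piece $\Gamma_{\rm cd}^{L,\tilde f}$ of the boundary, i.e.\ that $\tilde f$ satisfy the ODE \eqref{2D_BC_Fin2} with respect to $\tilde{\bf u}$. But in your iteration the approximate $(\tilde f,\tilde\varphi,\tilde\psi)$ are independent elements of the iteration set, so $\tilde{\bf u}\cdot{\bf n}_{\tilde f}$ need not vanish; both $\tilde f'$ and $\tilde u_2/\tilde u_1$ are $O(\sigma)$ quantities with no sign relation, so backward streamlines can exit through $\Gamma_{\rm cd}^{L,\tilde f}$ and the transport problem is then underdetermined near that boundary. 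The paper sidesteps this precisely by the nesting: for a fixed $S_\ast$ it first solves the free boundary problem (Problem \ref{FBP-S}) so that the resulting $(f,\varphi,\psi)$ satisfy the tangency condition, and only then is the transport equation \eqref{Ite-3D2} solved in $\mathcal{N}_{L,f}^+$, where well-posedness with entrance data alone is guaranteed (this is the content of the remark citing \cite[Lemma 3.3]{B-D-X}). Reordering your substeps to put (iv) before (i) would resolve this, but then you have essentially rediscovered the paper's nested structure.

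Two secondary points. First, the paper does not update the free boundary by integrating the ODE \eqref{2D_BC_Fin2}; it uses the mass-flux identity \eqref{New-f}--\eqref{2D-free-BC}, which yields an explicit formula for $f$ whose compatibility conditions $f(0)=f'(0)=f'(L)=0$ (needed both for membership in $\mathcal{F}(M_2)$ and for the reflection argument) fall out directly. Your ODE integration could in principle work but needs a separate argument that these corner compatibilities survive. Second, on $L$-uniformity: local Schauder plus translation invariance is necessary but not sufficient; local estimates only control the H\"older seminorm in terms of the $C^0$ norm, and the $L$-independent $C^0$ bound is the genuinely nontrivial ingredient. The paper obtains it by explicit barrier/comparison functions ($\mathfrak{N}$ for $\psi$, $\mathfrak{M}$ for $\phi_{\rm hom}$) together with Hopf's lemma, and then passes to $C^{2,\alpha}$ up to the boundary via even reflection across $\Gamma_{\rm en}^+$ and $\Gamma_{\rm ex}^{L,f}$, which is exactly what the localization \eqref{en-epsilon} and the iteration-set compatibilities $\partial_{x_1}S\equiv 0$, $\partial_{x_1x_1}\phi\equiv 0$ on $\Gamma_{\rm en}^\epsilon\cup\Gamma_{\rm ex}^{L,f}$ are designed to enable. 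Your proposal should spell out the $C^0$ step; without it the $L$-uniform estimate \eqref{3D-Prop-est} is not secured.
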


To find the entropy $S$ of Problem \ref{Prob-Cut}, we have to solve a transport equation in $\mathcal{N}_{L,f}^+$. 
According to \cite[Lemma 3.3]{B-D-X}, the condition \eqref{2D_BC_Fin2} must hold to be able to solve the transport equation for $S$.
Thus we first solve a free boundary problem for a fixed approximated entropy. 

For a fixed constant $\alpha\in(0,1)$ and a constant $M_1>0$ to be determined later, we define an iteration set
\begin{equation}\label{S-set}
\mathcal{S}(M_1):=\left\{S\in C^{2,\alpha}(\overline{\mathcal{N}_{L,-1/2}^+}):
\begin{split}
&\|S-S_0^+\|_{2,\alpha,\mathcal{N}_{L,-1/2}^+}\le M_1\sigma,\\
&\partial_{x_1}S\equiv0\mbox{ on }\Gamma_{\rm en}^\epsilon\cup\Gamma_{\rm ex}^L
\end{split}\right\},
\end{equation}
for $\mathcal{N}_{L,-1/2}^+:=\mathcal{N}_L\cap\left\{x_2>-{1}/{2}\right\}$, and $\Gamma_{\rm en}^\epsilon$ defined by \eqref{en-epsilon}.

\begin{problem}\label{FBP-S}
For each $S_{\ast}\in\mathcal{S}(M_1)$, find $(f, \varphi,\psi)$ satisfying \eqref{2D_BC_Fin2} and
\begin{equation}\label{S-free}
\left\{
\begin{split}
\mbox{div }{\mathbb F}(S_{\ast},\nabla\varphi,\nabla\psi)=0,\quad
	\Delta\psi=G(S_{\ast},\partial_{x_2}S_{\ast},\nabla\varphi,\nabla\psi)\quad&\mbox{in}\quad\mathcal{N}_{L,f}^+,\\
\varphi=\varphi_{\rm en},\quad\partial_{x_1}\psi=0\quad&\mbox{on}\quad\Gamma_{\rm en}^+,\\
\partial_{x_2}\varphi=0,\quad\psi=0\quad&\mbox{on}\quad\Gamma_{\rm w}^{+,L},\\
\partial_{x_2}\varphi=0,\quad\partial_{x_1}\psi=0\quad&\mbox{on}\quad\Gamma^{L,f}_{\rm ex},\\
\nabla\varphi\cdot{\bm \tau}_f=\nabla\varphi_0\cdot{\bm\tau}_f,\quad\nabla\psi\cdot{\bf n}_{f}=\mathcal{A}(f,S_{\ast})\quad&\mbox{on}\quad\Gamma_{\rm cd}^{L,f},
\end{split}\right.
\end{equation}
where $\mathbb{F}$, $G$, and $\mathcal{A}$ are given by \eqref{2D_FGH} and \eqref{A-def}.
\end{problem}

\begin{lemma}\label{S-free-2}
Under the same assumptions on $(S_{\rm en}, v_{\rm en})$ as in {\emph{Proposition \ref{3D-Prop}}},
there exists a small constant $\sigma_5>0$ depending only on the data and $(\alpha, M_1)$ so that if 
$${\sigma}=\|S_{\rm en}-S_0\|_{2,\alpha,\Gamma_{\rm en}^+}+\|v_{\rm en}\|_{1,\alpha,\Gamma_{\rm en}^+}\le\sigma_5,$$
then, for each $S_{\ast}\in\mathcal{S}(M_1)$, Problem \ref{FBP-S}  has a unique solution $(f, \varphi,\psi)$ satisfying
\begin{equation}\label{S-free-est}
 \|f\|_{2,\alpha,(0,L)}+
\|(\varphi,\psi)-(\varphi_0,0)\|_{2,\alpha,\mathcal{N}_{L,f}^+}\le C\left(M_1+1\right)\sigma,
\end{equation}
where the constant $C>0$ depends only on the data and $\alpha$ but independent of $L$.
\end{lemma}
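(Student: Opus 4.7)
My plan is to solve Problem \ref{FBP-S} by a contraction-type iteration after flattening the free boundary. Fix a flattening diffeomorphism $\Phi_{f^*}$ sending the fixed rectangle $\mathcal{D}_L:=(0,L)\times(0,1)$ to $\mathcal{N}_{L,f^*}^+$, for instance $(y_1,y_2)\mapsto(y_1,\,f^*(y_1)+(1-f^*(y_1))y_2)$, and pull the equations and boundary conditions back. I would introduce an iteration set
$$
\mathcal{I}(K):=\Bigl\{(f^*,\tilde\varphi^*,\tilde\psi^*):\|f^*\|_{2,\alpha,(0,L)}+\|\tilde\varphi^*-\varphi_0\|_{2,\alpha,\mathcal{D}_L}+\|\tilde\psi^*\|_{2,\alpha,\mathcal{D}_L}\le K(M_1+1)\sigma,\ f^*(0)=0\Bigr\},
$$
together with the compatibility conditions forced by \eqref{en-epsilon} at the corners of $\mathcal{D}_L$, so that all data match up to the required order at the entrance corners.

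Given $(f^*,\tilde\varphi^*,\tilde\psi^*)\in\mathcal{I}(K)$ I would generate the next iterate $(f,\tilde\varphi,\tilde\psi)$ in three stages. First, freezing $S_\ast$ and $\nabla\tilde\psi^*$ in the nonlinearity and working on $\mathcal{N}_{L,f^*}^+$, solve the quasilinear elliptic equation $\mbox{div }\mathbb{F}(S_\ast,\nabla\varphi,\nabla\tilde\psi^*)=0$ with the mixed boundary conditions in \eqref{S-free}; the background subsonicity $u_0<c_0$ and smallness of $\sigma$ guarantee uniform ellipticity, and the tangential-derivative conditions $\nabla\varphi\cdot\bm\tau_{f^*}=\nabla\varphi_0\cdot\bm\tau_{f^*}$ and $\partial_{x_2}\varphi=0$ integrate (starting from the corner values determined by $\varphi_{\rm en}$) to Dirichlet data along $\Gamma_{\rm cd}^{L,f^*}$ and $\Gamma_{\rm ex}^{L,f^*}$. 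Second, with $\nabla\varphi$ now known, solve the linear Poisson-type equation $\Delta\psi=G(S_\ast,\partial_{x_2}S_\ast,\nabla\varphi,\nabla\tilde\psi^*)$ with the Dirichlet/Neumann data in \eqref{S-free}; the oblique datum $\mathcal{A}(f^*,S_\ast)$ is well defined and $O(\sigma)$. Third, update $f$ by solving the ODE \eqref{2D_BC_Fin2} with the right-hand side evaluated at the freshly computed $(\tilde\varphi,\tilde\psi)$ along the graph; since $\partial_{x_1}\varphi\approx u_0>0$ the denominator is bounded away from zero and the ODE is globally solvable on $[0,L]$. Schauder estimates for mixed oblique-Dirichlet problems with corner regularity (in the style of Lieberman) deliver the $C^{2,\alpha}$ bounds at each stage.

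By choosing $K$ sufficiently large depending only on the data and $\alpha$, and then $\sigma_5$ sufficiently small depending on the data, $\alpha$, and $M_1$, the iteration map sends $\mathcal{I}(K)$ into itself. Estimating differences of two iterates in a weaker norm (say $C^{1,\alpha/2}$), the nonlinear residuals and the variation of the boundary $\Gamma_{\rm cd}^{L,f^*}$ contribute factors of order $\sigma$, so for $\sigma$ small enough the map is a strict contraction in the weaker norm on the (closed, bounded, $C^{2,\alpha}$) set $\mathcal{I}(K)$. Banach's fixed point theorem in the weaker norm combined with weak-$*$ compactness in $C^{2,\alpha}$ produces a unique fixed point lying in $\mathcal{I}(K)$, which, after pullback by $\Phi_f$, gives the solution of Problem \ref{FBP-S} with estimate \eqref{S-free-est}. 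All Schauder and ODE constants are obtained by covering arguments on fixed-size neighborhoods, so they are independent of $L$.

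The main obstacle I anticipate is the $C^{2,\alpha}$ regularity at the four corners of $\mathcal{N}_{L,f}^+$, where the boundary conditions change type. The corner at $(0,0)$, where $\Gamma_{\rm en}^+$ meets $\Gamma_{\rm cd}^{L,f}$, requires matching $\varphi=\varphi_{\rm en}$ with the tangential derivative condition $\nabla\varphi\cdot\bm\tau_f=\nabla\varphi_0\cdot\bm\tau_f$, which is ensured by $v_{\rm en}(0)=0$ from \eqref{en-epsilon}; similar checks at $(0,1)$, $(L,0)$ and $(L,1)$ rely on the vanishing of $v_{\rm en}$ near the endpoints of $\Gamma_{\rm en}^+$ and on the artificial exit conditions. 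A secondary, but crucial, bookkeeping point is arranging $L$-independence: the exit conditions $\partial_{x_2}\varphi=0$ and $\partial_{x_1}\psi=0$ are satisfied exactly by the background $(\varphi_0,0)$, so the linear problems produce only $O(\sigma)$ forcing from the exit, which is what permits the uniform estimates required for passing $L\to\infty$ in \S\ref{subsec-ex}.
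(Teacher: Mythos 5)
Your outline diverges from the paper at the step that turns out to matter most: the update of the free boundary $f$. You propose to obtain the next $f$ by directly integrating the ODE in \eqref{2D_BC_Fin2}, $f'(x_1)=\bigl(u_2/u_1\bigr)(x_1,f(x_1))$, $f(0)=0$. The paper instead updates $f$ through the nonlocal mass-flux relation \eqref{New-f}--\eqref{2D-free-BC}: $f(x_1)$ is chosen so that the horizontal momentum flux $\int_{f(x_1)}^1\rho^{\ast}u_1^{\ast}(x_1,t)\,dt$, corrected by $\rho_0^+u_0(f_\ast(x_1)-f(x_1))$, matches the entrance flux. This is not cosmetic. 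The paper's iteration set $\mathcal{F}(M_2)$ in \eqref{f-set} requires $f'(0)=f'(L)=0$, and these compatibility conditions are what make the even-reflection extensions $f_\ast^e$, $\mathfrak{G}_{\rm ext}$, $\mathfrak{B}_{\rm ext}$, $\mathfrak{F}^{\ast}$ lie in $C^{2,\alpha}$ (respectively $C^\alpha$, $C^{1,\alpha}$), which is the mechanism by which the $C^{2,\alpha}$ estimates \eqref{psi-est-2D} and \eqref{phi-est-2D} are obtained \emph{up to the corners with constants independent of $L$}. With the integral update one computes $f'(x_1)=f_\ast'(x_1)+\frac{1}{\rho_0^+u_0}\bigl(-f_\ast'(x_1)\rho^{\ast}u_1^{\ast}(x_1,f_\ast(x_1))+\rho^{\ast}u_2^{\ast}(x_1,f_\ast(x_1))\bigr)$ (using $\operatorname{div}(\rho^\ast{\bf u}^\ast)=0$ and $u_2^\ast=0$ on $\Gamma_{\rm w}^{+,L}$), and since $u_2^\ast$ is evaluated on the graph of the \emph{current} $f_\ast$, the exit conditions $\partial_{x_2}\varphi=\partial_{x_1}\psi=0$ on $\Gamma^{L,f_\ast}_{\rm ex}$ give $f'(L)=0$ automatically; similarly $v_{\rm en}(0)=0$ gives $f'(0)=0$. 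The nonlinear ODE you propose evaluates $u_2/u_1$ at $(L,f(L))$, which in general is not a point of $\Gamma^{L,f_\ast}_{\rm ex}$ (indeed it may lie outside $\overline{\mathcal{N}^+_{L,f_\ast}}$, so the right-hand side is only defined after an unspecified extension), and there is no reason $u_2(L,f(L))=0$. So the updated $f$ need not satisfy $f'(L)=0$, the reflection across $\{x_1=L\}$ fails, and the $L$-uniform $C^{2,\alpha}$ estimate at the exit corner is lost. Your $\mathcal{I}(K)$ also omits the higher-order compatibility $\partial_{x_1x_1}\phi=0$ on $\Gamma_{\rm en}^\epsilon\cup\Gamma_{\rm ex}^{L,f_\ast}$ that the paper builds into $\mathcal{K}_1^{f_\ast}(M_3)$ precisely so that $\mathfrak{F}^\ast$ vanishes there and the reflection works.

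Beyond that, the remaining differences are legitimate alternatives rather than errors. Flattening by $\Phi_{f^\ast}$ versus working on the curved domain, and solving the quasilinear equation for $\varphi$ in one shot versus the paper's inner Banach contraction in $\mathcal{K}^{f_\ast}(M_3,M_4)$, are both standard. Running a single Banach iteration in a weak norm with weak-$\ast$ compactness in $C^{2,\alpha}$ is also reasonable, though note the paper deliberately uses Schauder's theorem for the free-boundary layer (compactness of $\mathcal{F}(M_2)$ in $C^{2,\alpha/2}$ plus continuity of $\mathcal{I}^{S_\ast}$) and reserves contraction estimates for uniqueness; the explicit formula \eqref{2D-free-BC} is what makes continuity of the map $f_\ast\mapsto f$ easy to check. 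If you want to keep the ODE update, you would need to either linearize it by evaluating the right-hand side on the \emph{previous} graph $x_2=f^\ast(x_1)$ (so that the exit and entrance data kill $f'(0),f'(L)$) or switch to the paper's mass-flux formula; as written, the compatibility at $x_1=L$ is a genuine gap.
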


Once Lemma \ref{S-free-2} is proved, we prove Proposition \ref{3D-Prop} by the following approach:
For a fixed $S_\ast\in\mathcal{S}(M_1)$, let $(f,\varphi,\psi)$ be the unique solution of Problem \ref{FBP-S} with satisfying the estimate \eqref{S-free-est}. For such a solution, we solve  the following initial value problem for $S$:
\begin{equation}\label{Ite-3D2}
\left\{\begin{split}
	&H(S_{\ast},\nabla\varphi,\nabla\psi)(\nabla\varphi+\nabla^{\bot}\psi)\cdot \nabla S=0\quad\mbox{in}\quad\mathcal{N}_{L,f}^+,\\
	&S=S_{\rm en}\quad\mbox{on}\quad\Gamma_{\rm en}^+.
\end{split}\right.
\end{equation}
Take a suitable extension $S^f\in C^{2,\alpha/2}(\overline{\mathcal{N}_{L,-1/2}^+})$ of $S$. 
For such $S^f$, we define an  iteration mapping $\mathcal{J}:\mathcal{S}(M_1)\rightarrow C^{2,\alpha/2}(\overline{\mathcal{N}_{L,-1/2}^+})$ by 
$$\mathcal{J}(S_{\ast})=S^f.$$
Then we choose $M_1$ and $\sigma$ so that the mapping $\mathcal{J}$ maps $\mathcal{S}(M_1)$ into itself.
We show that there exists a fixed point $S^f$ of $\mathcal{J}$ by the Schauder fixed point theorem.
Finally, we prove that $(f,S,\varphi,\psi)$ is the unique solution of Problem \ref{Prob-Cut}, and satisfies the estimate \eqref{3D-Prop-est} stated in Proposition  \ref{3D-Prop}. Details are given in the rest of this section.

\subsection{Proof of Lemma \ref{S-free-2}}
For a constant $M_2>0$ to be determined later with $M_2\sigma\le\frac{1}{4}$, we define an iteration set
\begin{equation}\label{f-set}
\mathcal{F}(M_2):=\left\{f\in C^{2,\alpha}([0,L]):
	\begin{split}
	&\|f\|_{2,\alpha,(0,L)}\le M_2\sigma,\\
	&f'(0)=f'(L)=0, \mbox{ }f(0)=0
	\end{split}\right\}.
\end{equation}
We fix $f_{\ast}\in\mathcal{F}(M_2)$, and solve the following boundary value problem in $\mathcal{N}_{L,f_{\ast}}^+$:
\begin{equation}\label{Ite-3D}
\left\{
\begin{split}
\mbox{div}{\mathbb F}(S_{\ast},\nabla\varphi,\nabla\psi)=0,\quad
	\Delta\psi=G(S_{\ast},\partial_{x_2}S_{\ast},\nabla\varphi,\nabla\psi)\quad&\mbox{in}\quad\mathcal{N}_{L,f_{\ast}}^+,\\
\varphi=\varphi_{\rm en},\quad\partial_{x_1}\psi=0\quad&\mbox{on}\quad\Gamma_{\rm en}^+,\\
\partial_{x_2}\varphi=0,\quad\psi=0\quad&\mbox{on}\quad\Gamma_{\rm w}^{+,L},\\
\partial_{x_2}\varphi=0,\quad\partial_{x_1}\psi=0\quad&\mbox{on}\quad\Gamma^{L,f_{\ast}}_{\rm ex},\\
\nabla\varphi\cdot{\bm \tau}_{f_{\ast}}=\nabla\varphi_0\cdot{\bm\tau}_{f_{\ast}},\quad\nabla\psi\cdot{\bf n}_{f_{\ast}}=\mathcal{A}(f_{\ast},S_{\ast})\quad&\mbox{on}\quad\Gamma_{\rm cd}^{L,f_{\ast}},
\end{split}\right.
\end{equation}
for $\mathbb{F}$, $G$, and $\mathcal{A}$ given by \eqref{2D_FGH} and \eqref{A-def}.

\begin{lemma}\label{2D-fix}
Under the same assumptions on $(S_{\rm en}, v_{\rm en})$ as in {\emph{Proposition \ref{3D-Prop}}}, there exists a small constant $\sigma_6>0$ depending only on the data and $(\alpha, M_1, M_2)$ so that if 
$${\sigma}=\|S_{\rm en}-S_0\|_{2,\alpha,\Gamma_{\rm en}^+}+\|v_{\rm en}\|_{1,\alpha,\Gamma_{\rm en}^+}\le\sigma_6,$$
then the boundary value problem \eqref{Ite-3D} has a unique solution $(\varphi,\psi)$ satisfying
\begin{equation}\label{2D-Fix-est}
\begin{split}
\|(\varphi,\psi)-(\varphi_0,0)\|_{2,\alpha,\mathcal{N}_{L,f_\ast}^+}&\le C\left(M_1+1\right)\sigma,
\end{split}
\end{equation}
where the constant $C>0$ depends only on the data and $\alpha$ but independent of $L$.
\end{lemma}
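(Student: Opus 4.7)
The plan is to solve \eqref{Ite-3D} by a contraction mapping argument on a small closed ball in $C^{2,\alpha}$ around the background $(\varphi_0,0)$, after flattening the fixed contact boundary. As a first step I would transform $\mathcal{N}_{L,f_\ast}^+$ onto the rectangle $R_L:=(0,L)\times(0,1)$ via the diffeomorphism $(x_1,x_2)\mapsto(x_1,\tfrac{x_2-f_\ast(x_1)}{1-f_\ast(x_1)})$. Because $f_\ast\in\mathcal{F}(M_2)$ with $f_\ast'(0)=f_\ast'(L)=0$, this map is a $C^{2,\alpha}$-perturbation of the identity of size $O(M_2\sigma)$ that preserves right-angle corners, which is crucial for corner regularity. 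On $R_L$ the equations \eqref{Ite-3D} become second-order divergence-form PDEs whose principal parts reduce, at $\sigma=0$, to $\mathrm{div}(\rho_0^+\nabla(\cdot))$ and $\Delta$, both uniformly elliptic. All the nonlinear and lower-order terms are small (of order $(M_1+1)\sigma$) and vanish at the background.

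Next I would introduce $\phi:=\varphi-\varphi_0$ and set up an iteration map $\mathcal{K}:(\tilde\phi,\tilde\psi)\mapsto(\phi,\psi)$ on
\begin{equation*}
\mathcal{B}:=\{(\tilde\phi,\tilde\psi)\in C^{2,\alpha}(\overline{R_L})^{\,2}:\|\tilde\phi\|_{2,\alpha,R_L}+\|\tilde\psi\|_{2,\alpha,R_L}\le N(M_1+1)\sigma\},
\end{equation*}
where $(\phi,\psi)$ solves the two \emph{decoupled linear} problems obtained by inserting $(\tilde\phi,\tilde\psi)$ into all nonlinear coefficients and data, and linearizing the principal part at the background. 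For $\phi$, this yields a uniformly elliptic divergence-form equation with a Dirichlet condition on $\Gamma_{\rm en}^+$, Neumann conditions on $\Gamma_{\rm w}^{+,L}$, and tangential-derivative (hence, up to a uniquely determined constant, Dirichlet) conditions on $\Gamma_{\rm cd}^{L,f_\ast}$ and $\Gamma_{\rm ex}^{L,f_\ast}$; the constant on each of the latter is pinned down by continuity with the Dirichlet value at the entrance corners. For $\psi$, one obtains a Poisson-type equation with Dirichlet data on $\Gamma_{\rm w}^{+,L}$ and Neumann data on the remaining three sides (with the oblique datum $\mathcal{A}(f_\ast,S_\ast)$ on the contact). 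Corner compatibility at all four corners holds because $v_{\rm en}\equiv 0$ on $\Gamma_{\rm en}^\epsilon$ from \eqref{en-epsilon}, $f_\ast'(0)=f_\ast'(L)=0$, and the boundary data of $\psi$ match ($\psi=0$ on the wall versus the homogeneous Neumann conditions on entrance and exit).

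The main obstacle is obtaining the $C^{2,\alpha}$ Schauder estimate for both linear problems with a constant \emph{independent of $L$}. I would cover $R_L$ by overlapping balls of uniform radius: interior balls away from the boundary, boundary half-balls along $\Gamma_{\rm w}^{+,L}$ and $\Gamma_{\rm cd}^{L,f_\ast}$, and corner quarter-balls at the four corners. The interior and local boundary Schauder estimates (including those for mixed-type oblique/Neumann problems in wedges, e.g.\ Lieberman's theory) are translation-invariant in $x_1$, so their constants do not depend on $L$; near the exit one may alternatively perform an even reflection across $\{x_1=L\}$ for $\psi$ (which preserves $\partial_{x_1}\psi=0$) and a similar symmetric extension for $\phi$, converting the exit boundary into an interior region. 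Combining the local bounds by a standard patching argument yields a global estimate
\begin{equation*}
\|\phi\|_{2,\alpha,R_L}+\|\psi\|_{2,\alpha,R_L}\le C\bigl(\|\text{RHS}\|_{0,\alpha}+\|\text{boundary data}\|_{1,\alpha}\bigr),
\end{equation*}
with $C$ independent of $L$, where the right-hand sides are controlled by $C(M_1+1)\sigma$ using \eqref{en-epsilon}, $S_\ast\in\mathcal{S}(M_1)$, $f_\ast\in\mathcal{F}(M_2)$, and $(\tilde\phi,\tilde\psi)\in\mathcal{B}$.

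Finally, choosing $N$ large enough and then $\sigma_6$ small enough (depending on $M_1,M_2$ and the data) forces $\mathcal{K}$ to map $\mathcal{B}$ into itself. To obtain a fixed point I would verify that $\mathcal{K}$ is a contraction in the weaker norm $C^{1,\alpha/2}$: the difference of two images satisfies linear problems whose data are differences of quadratic terms in $(\tilde\phi,\tilde\psi)$ and hence of size $O(\sigma)\|(\tilde\phi_1-\tilde\phi_2,\tilde\psi_1-\tilde\psi_2)\|_{1,\alpha/2}$; by the corresponding $C^{1,\alpha/2}$ Schauder estimate (again $L$-independent), for $\sigma$ sufficiently small this yields a contraction factor $<1$. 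Banach's theorem produces a unique fixed point, which by the $L$-independent $C^{2,\alpha}$ a priori estimate lies in $\mathcal{B}$ and satisfies \eqref{2D-Fix-est}. Uniqueness in the full $C^{2,\alpha}$ class at this size follows from the same difference argument.
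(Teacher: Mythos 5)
Your overall architecture — iterate on a ball around $(\varphi_0,0)$, decouple the two linear problems by freezing the nonlinear coefficients, obtain an $L$-independent Schauder bound, then close by contraction — is the same architecture the paper uses; the paper simply works in the curved domain $\mathcal{N}_{L,f_\ast}^+$ rather than flattening, and does the contraction in the full $C^{2,\alpha}$ norm rather than in $C^{1,\alpha/2}$. Both of those are cosmetic differences.

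The substantive gap in your argument is in the claim that translation-invariance of the local Schauder constants plus a patching argument gives a global $C^{2,\alpha}$ estimate with a constant independent of $L$. Patching uniform-radius local estimates $\|u\|_{2,\alpha,B_r}\le C(\|u\|_{0,B_{2r}}+\|\text{data}\|)$ only produces $\|u\|_{2,\alpha,R_L}\le C(\|u\|_{0,R_L}+\|\text{data}\|)$; one still needs an $L$-independent bound on $\|u\|_{0,R_L}$ to close, and that bound is not free. For the Poisson problem for $\psi$, whose boundary data are Dirichlet only on the top wall and Neumann on the other three sides, neither the maximum principle alone nor translation invariance supplies a uniform sup bound; the paper produces one by constructing an explicit supersolution $\mathfrak{N}(x_2)=2e\mathfrak{a}(-e^{x_2}-x_2+1+e)$ (depending only on $x_2$, hence working uniformly in $L$) and invoking the comparison principle together with Hopf's lemma on the Neumann portions. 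The same issue arises for the $\varphi$-equation, where a Neumann condition is imposed on the wall; the paper again builds an explicit quadratic barrier $\mathfrak{M}(x_2)$ depending only on $x_2$. Without these (or equivalent) barriers, your claimed $L$-independent global estimate is not established, and the entire argument collapses when you pass $L\to\infty$ in the next section. You should either construct such barriers or give another uniform $C^0$ estimate before asserting the patched Schauder bound.

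A secondary point worth noting: in the linear problem for $\phi$ in the cut-off domain the paper does not retain $\partial_{x_2}\phi=0$ at the exit — it replaces it with the artificial Dirichlet condition $\phi=0$ on $\Gamma_{\rm ex}^{L,f_\ast}$, which is also built into the iteration set $\mathcal{K}_1^{f_\ast}(M_3)$ (together with the compatibility conditions $\partial_{x_1x_1}\phi=0$ on $\Gamma_{\rm en}^\epsilon\cup\Gamma_{\rm ex}^{L,f_\ast}$ that make the even reflections across $\{x_1=0\}$ and $\{x_1=L\}$ legitimate). Your proposal keeps the original oblique/tangential condition on the exit side, which would require you to separately verify that the wedge regularity near the exit corners holds with $L$-uniform constants; the paper sidesteps this by choosing the artificial Dirichlet data and a reflection. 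If you keep your choice you need to supply that corner argument explicitly.
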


\begin{proof}
{\bf 1.} 
For two constants $M_3,M_4>0$  to be determined later, we define an iteration set 
$$\mathcal{K}^{f_\ast}(M_3,M_4):=\mathcal{K}_1^{f_\ast}(M_3)\times\mathcal{K}_2^{f_{\ast}}(M_4)$$
for 
\begin{equation}\label{phi-set}
\left.\begin{split}
&\mathcal{K}_1^{f_\ast}(M_3):=\left\{\phi\in C^{2,\alpha}(\overline{\mathcal{N}_{L,f_\ast}^+}): 
	\begin{split}
	&\|\phi\|_{2,\alpha,\mathcal{N}_{L,f_\ast}^+}\le M_3\sigma,\mbox{ }\phi=\varphi_{\rm en}\mbox{ on }\Gamma_{\rm en}^+,\\
	&\partial_{x_1x_1}\phi=0\mbox{ on }\Gamma_{\rm en}^{\epsilon}\cup\Gamma^{L,f_{\ast}}_{\rm ex},\\
	&\phi=0\mbox{ on }\Gamma^{L,f_{\ast}}_{\rm ex}
	\end{split}\right\},\\
&\mathcal{K}_2^{f_\ast}(M_4):=\left\{\psi\in C^{2,\alpha}(\overline{\mathcal{N}_{L,f_\ast}^+}):
	\begin{split}
	&\|\psi\|_{2,\alpha,\mathcal{N}_{L,f_\ast}^+}\le M_4\sigma,\\
	&\partial_{x_1}\psi=0\mbox{ on }\Gamma_{\rm en}^+\cup\Gamma^{L,f_{\ast}}_{\rm ex}
	\end{split}\right\},
\end{split}\right.
\end{equation}
where $\Gamma_{\rm en}^{\epsilon}$ is given by \eqref{en-epsilon}.

{\bf 2.} 
For a fixed $(\tilde{\phi},\tilde{\psi})\in\mathcal{K}^{f_\ast}(M_3,M_4)$, let
\begin{equation}\label{GB-def}
\mathfrak{G}:=G(S_{\ast},\partial_{x_2}S_{\ast},\nabla\tilde{\phi}+\nabla\varphi_0,\nabla\tilde{\psi}),\quad
\mathfrak{B}:=\mathcal{A}(f_{\ast},S_{\ast}),
\end{equation}
where $G$ and $\mathcal{A}$ are given by \eqref{2D_FGH} and \eqref{A-def}, respectively. 
By the standard elliptic theory, the linear boundary value problem 
\begin{equation}\label{lin-psi}
\left\{
\begin{split}
\Delta\psi=\mathfrak{G}\quad&\mbox{in}\quad\mathcal{N}_{L,f_{\ast}}^+,\\
\psi=0\quad&\mbox{on}\quad\Gamma_{\rm w}^{+,L},\\
\partial_{x_1}\psi=0\quad&\mbox{on}\quad\Gamma_{\rm en}^+\cup\Gamma^{L,f_{\ast}}_{\rm ex},\\
\nabla\psi\cdot{\bf n}_{f_\ast}=\mathfrak{B}\quad&\mbox{on}\quad\Gamma_{\rm cd}^{L,f_{\ast}},
\end{split}\right.
\end{equation}
has a unique solution $\psi\in C^{1,\alpha}(\overline{\mathcal{N}_{L,f_{\ast}}^+})\cap C^{2,\alpha}({\mathcal{N}_{L,f_\ast}^+})$.

{\bf Claim:} $\psi$ satisfies the estimate
\begin{equation}\label{psi-est-2D}
\|\psi\|_{2,\alpha,\mathcal{N}_{L,f_\ast}^+}\le C\left(\|\mathfrak{G}\|_{\alpha,\mathcal{N}_{L,f_\ast}^+}+\|\mathfrak{B}\|_{1,\alpha,\Gamma_{\rm cd}^{L,f_{\ast}}}\right),
\end{equation}
where the constant $C>0$ depends only on the data and $\alpha$ but independent of $L$.

Hereafter, any estimate constant $C$ is regarded to be chosen depending only on the data and $\alpha$ but independent of $L$ unless specified otherwise.

\begin{proof}[Proof of Claim]
Set
$$\mathfrak{a}:=\|\mathfrak{G}\|_{0,\mathcal{N}_{L,f_\ast}^+}+\|\mathfrak{B}\|_{0,\Gamma_{\rm cd}^{L,f_{\ast}}},$$
and define a function $\mathfrak{N}$ by 
$$\mathfrak{N}(x_1,x_2):=2e\mathfrak{a}\left(-e^{x_2}-{x_2}+1+e\right) \quad\mbox{for}\quad(x_1,x_2)\in\overline{\mathcal{N}_{L,f_\ast}^+}.$$
By a direct computation, one can directly check that $\mathfrak{N}\pm\psi$ satisfies 
\begin{equation*}
\left\{\begin{split}
\Delta(\mathfrak{N}\pm\psi)=-\mathfrak{a}e^{1+{x_2}}\pm\mathfrak{G}\le 0\quad&\mbox{in}\quad\mathcal{N}_{L,f_\ast}^+,\\
\mathfrak{N}\pm\psi=0\quad&\mbox{on}\quad\Gamma_{\rm w}^{+,L},\\
\partial_{x_1}(\mathfrak{N}\pm \psi)=0\quad&\mbox{on}\quad\Gamma_{\rm en}^+\cup\Gamma^{L,f_{\ast}}_{\rm ex},\\
\nabla(\mathfrak{N}\pm\psi)\cdot{\bf n}_{f_\ast}=-\frac{2e\mathfrak{a}(-e^{x_2}-1)}{\sqrt{1+|f'_{\ast}(x_1)|^2}}\pm\mathfrak{B}\ge 0\quad&\mbox{on}\quad\Gamma_{\rm cd}^{L,f_{\ast}}.
\end{split}\right.
\end{equation*}
By the comparison principle and Hopf's lemma, we have
$-\mathfrak{N}\le\psi\le\mathfrak{N}.$
Thus we have
$$\|\psi\|_{0,\mathcal{N}_{L,f_\ast}^+}\le C\left(\|\mathfrak{G}\|_{0,\mathcal{N}_{L,f_\ast}^+}+\|\mathfrak{B}\|_{0,\Gamma_{\rm cd}^{L,f_{\ast}}}\right).$$

To obtain $C^{2,\alpha}$ estimate up to the boundary, 
we use the method of reflection. Define an extension of $f_{\ast}\in\mathcal{F}(M_2)$ into $-1\le x_1\le L+1$ by 
\begin{equation*}
f_{\ast}^e(x_1):=\left\{\begin{split}
f_{\ast}(-x_1)\quad\mbox{for}\quad&-1\le x_1<0,\\
f_{\ast}(x_1)\quad\mbox{for}\quad&0\le x_1\le L,\\
f_{\ast}(2L-x_1)\quad\mbox{for}\quad &L<x_1\le L+1.
\end{split}\right.
\end{equation*}
Then $f_{\ast}^e\in C^{2,\alpha}([-1,L+1])$ since $f_{\ast}'(0)=f_{\ast}'(L)=0$.
We define an extended domain
\begin{equation*}\label{2D-N-ext}
\mathcal{N}_{\rm ext}:=\left\{(x_1,x_2)\in\mathbb{R}^2: -1<x_1<L+1, f_{\ast}^e(x_1)<x_2<1\right\},
\end{equation*}
and even extensions of $\psi$, $\mathfrak{G}$, and $\mathfrak{B}$ into $\mathcal{N}_{\rm ext}$ as follows:
\begin{equation*}
\begin{split}
\left(\psi_{\rm ext},\mathfrak{G}_{\rm ext}\right)(x_1,x_2)
&:=\left\{\begin{split}
	\left(\psi,\mathfrak{G}\right)(-x_1,x_2)\quad&\mbox{for }-1\le x_1\le0,\\
	\left(\psi,\mathfrak{G}\right)(x_1,x_2)\quad&\mbox{for }0\le x_1\le L,\\
	\left(\psi,\mathfrak{G}\right)(2L-x_1,x_2)\quad&\mbox{for }L\le x_1\le L+1,\\
	\end{split}\right.\\
\mathfrak{B}_{\rm ext}(x_1,x_2)
&:=\left\{\begin{split}
	\mathfrak{B}(-x_1,f_{\ast}^e(x_1))\quad&\mbox{for }-1\le x_1\le0,\\
	\mathfrak{B}(x_1,f_{\ast}^e(x_1))\quad&\mbox{for }0\le x_1\le L,\\
	\mathfrak{B}(2L-x_1,f_{\ast}^e(x_1))\quad&\mbox{for }L\le x_1\le L+1.
	\end{split}\right.
\end{split}
\end{equation*}
Then $\mathfrak{G}_{\rm ext}\in C^{\alpha}(\overline{\mathcal{N}_{\rm ext}})$ satisfies
\begin{equation}\label{2D-G-est}
\|\mathfrak{G}_{\rm ext}\|_{\alpha,\mathcal{N}_{\rm ext}}\le C\|\mathfrak{G}\|_{\alpha,\mathcal{N}_{L,f_\ast}^+}.
\end{equation}
By the compatibility conditions of $(S_{\ast},f_{\ast})$ given in \eqref{S-set} and \eqref{f-set}, 
$$\nabla\mathfrak{B}_{\rm ext}\cdot{\bm \tau}_{f_{\ast}}(0,0)=\nabla\mathfrak{B}_{\rm ext}\cdot{\bm \tau}_{f_\ast}(L,f_{\ast}(L))=0,$$
where ${\bm \tau}_{f_\ast}$ is the unit tangential vector field of $\partial\mathcal{N}_{\rm ext}\cap\{x_2=f_{\ast}^e(x_1)\}$.
From this and the definition of $\mathfrak{B}_{\rm ext}$, we have the estimate
\begin{equation}\label{2D-B-est}
\|\mathfrak{B}_{\rm ext}\|_{1,\alpha,\partial\mathcal{N}_{\rm ext}\cap\{x_2=f_{\ast}^e(x_1)\}}\le C\|\mathfrak{B}\|_{1,\alpha,\Gamma_{\rm cd}^{L,f_{\ast}}}.
\end{equation}

Consider a connected subdomain $\mathcal{N}_l$ of ${\mathcal{N}_{\rm ext}}$ such that 
\begin{equation*}\label{2D-NL}
\mathcal{N}_{\rm ext}\cap\left\{-\frac{1}{2}\le x_1\le\frac{1}{2}\right\}\subset{\mathcal{N}_l}\subset\mathcal{N}_{\rm ext}\cap\left\{-{1}\le x_1\le{1}\right\}
\end{equation*}
and $\partial{\mathcal{N}_l}$ is a simple closed smooth curve.
By the standard elliptic theory, the boundary value problem 
\begin{equation}\label{ref-Psi}
\left\{\begin{split}
\Delta\Psi=\mathfrak{G}_{\rm ext}\quad&\mbox{in}\quad{\mathcal{N}_{l}},\\
\Psi=0\quad&\mbox{on}\quad\partial{\mathcal{N}_{l}}\cap\{x_2=1\},\\
\nabla\Psi\cdot{\bf n}_{f_\ast}=\mathfrak{B}_{\rm ext}\quad&\mbox{on}\quad\partial{\mathcal{N}_{l}}\cap\{x_2=f^e_{\ast}(x_1)\},\\
\Psi=\psi_{\rm ext}\quad&\mbox{on}\quad\partial{\mathcal{N}_{l}}\backslash\{x_2=1,f^e_{\ast}(x_1)\},\\
\end{split}\right.
\end{equation}
 has a unique solution $\Psi\in C^{2,\alpha}(\overline{\mathcal{N}_{l}})$ satisfying 
\begin{equation*}
\|\Psi\|_{2,\alpha,{\mathcal{N}_{l}}}\le C\left(\|\mathfrak{G}_{\rm ext}\|_{\alpha,{\mathcal{N}_{l}}}+\|\mathfrak{B}_{\rm ext}\|_{1,\alpha,\partial{\mathcal{N}_{l}}\cap\Gamma_{\rm cd}^{L,f_{\ast}}}\right).
\end{equation*}
It follows from \eqref{2D-G-est}-\eqref{2D-B-est} that
\begin{equation*}
\|\Psi\|_{2,\alpha,{\mathcal{N}_{l}}}\le C\left(\|\mathfrak{G}\|_{\alpha,\mathcal{N}_{L,f_\ast}^+}+\|\mathfrak{B}\|_{1,\alpha,\Gamma_{\rm cd}^{L,f_{\ast}}}\right).
\end{equation*}
By the definitions of $(\mathfrak{G}_{\rm ext}, \mathfrak{B}_{\rm ext}, \psi_{\rm ext})$ and the uniqueness of a solution to \eqref{ref-Psi}, we have 
 $\Psi(x_1,x_2)=\Psi(-x_1,x_2)$ and $\partial_{x_1}\Psi(0,x_2)=0$.
 Then
$\Psi(x_1,x_2)=\psi(x_1,x_2)$ for $(x_1,x_2)\in\mathcal{N}_l\cap\{x_1\ge0\}$
by the uniqueness of a solution to \eqref{lin-psi}.
Thus we get the estimate
\begin{equation}\label{psi-left}
\|\psi\|_{2,\alpha,\mathcal{N}_{l}\cap\mathcal{N}_{L,f_\ast}^+}\le C\left(\|\mathfrak{G}\|_{\alpha,\mathcal{N}_{L,f_\ast}^+}+\|\mathfrak{B}\|_{1,\alpha,\Gamma_{\rm cd}^{L,f_{\ast}}}\right).
\end{equation}
One can also similarly check that 
\begin{equation}\label{psi-right}
\|\psi\|_{2,\alpha,\mathcal{N}_{L,f_\ast}^+\cap\{x_1\ge L-1/2\}}\le C\left(\|\mathfrak{G}\|_{\alpha,\mathcal{N}_{L,f_\ast}^+}+\|\mathfrak{B}\|_{1,\alpha,\Gamma_{\rm cd}^{L,f_{\ast}}}\right).
\end{equation}
By \eqref{psi-left} and \eqref{psi-right}, we have the $C^{2,\alpha}$ estimate
\begin{equation*}
\|\psi\|_{2,\alpha,\mathcal{N}_{L,f_\ast}^+}\le C\left(\|\mathfrak{G}\|_{\alpha,\mathcal{N}_{L,f_\ast}^+}+\|\mathfrak{B}\|_{1,\alpha,\Gamma_{\rm cd}^{L,f_{\ast}}}\right).
\end{equation*}
The proof of Claim is completed.
\end{proof}

{\bf 3.} 
For $\xi\in\mathbb{R}$, ${\bf q}=(q_1,q_2)\in\mathbb{R}^2$, and ${\bf r}\in\mathbb{R}^2$,
define ${\bf A}=(A_1,A_2)$ by 
\begin{equation*}\left.
\begin{split}
&A_j(\xi,{\bf q},{\bf r}):=H(\xi,{\bf q},{\bf r})q_j\quad\mbox{for}\quad j=1,2,
\end{split}\right.
\end{equation*}
where $H$ is given by \eqref{2D_FGH}.
Then the first equation $\mbox{div}\,{\mathbb F}(S_{\ast},\nabla\varphi,\nabla\psi)=0$ in \eqref{Ite-3D} can be rewritten as 
\begin{equation}\label{varphi-eq-1}
\mbox{div}\left({\bf A}(S_{\ast},\nabla\varphi,\nabla\psi)\right)=-\mbox{div}\left(H(S_{\ast},\nabla\varphi,\nabla\psi)(\nabla^{\bot}\psi)\right).
\end{equation}
For $\varphi_0$ given by \eqref{2D-varphi0}, denote ${\bf V}_0:=(S_0^+,\nabla\varphi_0,{\bf 0})$ and set
\begin{equation}\label{2D-aij}
a_{ij}:=\partial_{q_j}A_i({\bf V}_0)\quad\mbox{for}\quad i,j=1,2.
\end{equation}
Then the matrix $[a_{ij}]_{i,j=1}^2$ is strictly positive and diagonal, and there exists a constant $\nu\in(0,{1}/{10}]$ satisfying 
\begin{equation}\label{nu}
\nu \mathbb{I}_2\le[a_{ij}]_{i,j=1}^2\le \frac{1}{\nu}\mathbb{I}_2.
\end{equation}

Set $\phi:=\varphi-\varphi_0$ for $\varphi_0$ given by \eqref{2D-varphi0}.
Then \eqref{varphi-eq-1} can be rewritten as 
$$\mathcal{L}(\phi)=\mbox{div}{\bf F}(S_{\ast}-S_0^+,\nabla\phi,\nabla\psi),$$
where $\mathcal{L}$ and ${\bf F}=(F_1,F_2)$ are defined as follows:
\begin{equation}\label{2D-def-F}
\left.
\begin{split}
&\mathcal{L}(\phi)
:=\sum_{i=1}^2a_{ii}\partial_{ii}\phi,\\
&\begin{split}F_i(Q):=&-\int_0^1 D_{(\xi,\bf r)}A_i({\bf V}_0+tQ)dt\cdot(\xi,{\bf r})\\
				&-\int_0^1 D_{\bf q}A_i({\bf V}_0+tQ)-D_{\bf q}A_i({\bf V}_0)dt\cdot{\bf q}-H({\bf V}_0+Q)({\bf r}^{\bot})_i,
				\end{split}
\end{split}\right.
\end{equation}
with $Q=(\xi,{\bf q},{\bf r})\in\mathbb{R}\times(\mathbb{R}^2)^2$. 
Here, $\partial_{x_i}$ is abbreviated as $\partial_i$.

By the boundary conditions for $\varphi$ given in \eqref{Ite-3D} and the definition of $\varphi_0$ given by \eqref{2D-varphi0},  the boundary conditions for $\phi$ on 
$\partial \mathcal{N}_{L, f_*}^+\setminus \Gamma_{\rm cd}^{L, f_*}$ become
\begin{equation*}
\phi=\varphi_{en}\quad\mbox{on}\quad \Gamma_{\rm en}^+,\quad
\partial_{x_2}\phi=0\quad\mbox{on}\quad \Gamma_{\rm w}^{+,L},\quad
\phi=0\quad\mbox{on}\quad \Gamma_{\rm ex}^{L,f_{\ast}}.
\end{equation*}
On $\Gamma_{\rm cd}^{L,f_{\ast}}$, the boundary condition for $\varphi$  given in \eqref{Ite-3D} implies that $\phi$ should be a constant along $\Gamma_{\rm cd}^{L,f_{\ast}}$. Since we seek a solution $\phi$ to be continuous up to the boundary, and since $\varphi_{\rm en}(0,0)=0$ by the definition \eqref{def-phi-en}, we prescribe the boundary condition for $\phi$ on $\Gamma_{\rm cd}^{L,f_{\ast}}$ as
\begin{equation*}
\phi=0\quad\mbox{on}\quad \Gamma_{\rm cd}^{L,f_{\ast}}.
\end{equation*}

{\bf 4.} With the unique solution $\psi\in C^{2,\alpha}(\overline{\mathcal{N}_{L,f_\ast}^+})$ to \eqref{lin-psi} associated with $(\tilde{\phi},\tilde{\psi})\in\mathcal{K}^{f_\ast}(M_3,M_4)$, set
\begin{equation}\label{FF_def}
\mathfrak{F}:=\mbox{div}{\bf F}(S_{\ast}-S_0^+,\nabla\tilde{\phi},\nabla\psi),
\end{equation}
where ${\bf F}$ is given by \eqref{2D-def-F}.

{\bf Claim:}
The linear boundary value problem
\begin{equation}\label{lin-phi}
\left\{\begin{split}
\mathcal{L}(\phi)=\mathfrak{F}\quad&\mbox{in}\quad \mathcal{N}_{L,f_\ast}^+,\\
\phi=\varphi_{\rm en}\quad&\mbox{on}\quad\Gamma_{\rm en}^+,\\
\phi=0\quad&\mbox{on}\quad \Gamma_{\rm cd}^{L,f_{\ast}}\cup\Gamma^{L,f_{\ast}}_{\rm ex},\\
\partial_{x_2}\phi=0\quad&\mbox{on}\quad \Gamma_{\rm w}^{+,L},
\end{split}\right.
\end{equation}
has a unique solution $\phi\in C^{2,\alpha}(\overline{\mathcal{N}_{L,f_\ast}^+})$. 
Moreover, the solution $\phi$ satisfies
$$\partial_{x_1x_1}\phi\equiv 0\quad\mbox{on}\quad\Gamma_{\rm en}^\epsilon\cup\Gamma^{L,f_{\ast}}_{\rm ex},$$ 
where $\Gamma_{\rm en}^{\epsilon}$ is given by \eqref{en-epsilon},
and the estimate
\begin{equation}\label{phi-est-2D}
\|\phi\|_{2,\alpha,\mathcal{N}_{L,f_\ast}^+}
\le C\left(\|\mathfrak{F}\|_{\alpha,\mathcal{N}_{L,f_\ast}^+}+\|{\varphi_{\rm en}}\|_{2,\alpha,\Gamma_{\rm en}^+}\right),
\end{equation}
where the constant $C>0$ depends only on the data and $\alpha$ but independent of $L$.

\begin{proof}[Proof of Claim]
For $\varphi_{\rm en}$ given by \eqref{def-phi-en}, define a function ${\varphi_{\rm en}^{\ast}}$ by 
\begin{equation}\label{def-var-en}
{\varphi_{\rm en}^{\ast}}(x_1,x_2):=\eta(x_1)\varphi_{\rm en}\left(\frac{x_2-f_{\ast}(x_1)}{1-f_{\ast}(x_1)}\right)\quad\mbox{for}\quad(x_1,x_2)\in\mathcal{N}_{L,f_{\ast}}^+,
\end{equation}
where $\eta$ is a $C^{\infty}$ function satisfying 
\begin{equation}\label{def-eta}
\eta=1\quad\mbox{for } x_1<\frac{L}{3},\quad\eta=0\quad\mbox{for } x_1>\frac{2L}{3},\quad |\eta'(x_1)|\le 2,\quad|\eta''(x_1)|\le2.
\end{equation}
Set $\phi_{\rm hom}:=\phi-{\varphi_{\rm en}^{\ast}}$.
Then \eqref{lin-phi} can be rewritten as 
\begin{equation}\label{lin-phi-hom}
\left\{\begin{split}
\mathcal{L}(\phi_{\rm hom})={\mathfrak{F}^{\ast}}&\quad\mbox{in}\quad\mathcal{N}_{L,f_{\ast}}^+,\\
\phi_{\rm hom}=0&\quad\mbox{on}\quad \partial\mathcal{N}_{L,f_{\ast}}^+\backslash\Gamma_{\rm w}^{+,L},\\
\partial_{x_2}\phi_{\rm hom}=0&\quad\mbox{on}\quad \Gamma_{\rm w}^{+,L},\\
\end{split}\right.
\end{equation}
where the function ${\mathfrak{F}^{\ast}}$ is defined by 
\begin{equation}\label{Fast-def}
{\mathfrak{F}^{\ast}}:=\mathfrak{F}-\sum_{i=1}^2 a_{ii}\partial_{ii}\varphi_{\rm en}^{\ast},
\end{equation}
where $a_{ii}$ ($i=1,2$) are given by \eqref{2D-aij}.
By the standard elliptic theory, we know that the linear boundary problem \eqref{lin-phi-hom} has a unique solution $\phi_{\rm hom}\in C^{1,\alpha}(\overline{\mathcal{N}_{L,f_{\ast}}^+})\cap C^{2,\alpha}(\mathcal{N}_{L,f_{\ast}}^+)$. 

To obtain a uniform $C^0$ estimate of $\phi_{\rm hom}$ for all $L$, we define a function $\mathfrak{M}$ by 
$$\mathfrak{M}(x_1,x_2):=-\left(\frac{\|\mathfrak{F}^{\ast}\|_{0,\alpha,\mathcal{N}_{L,f_\ast}^+}}{a_{22}}\right)x_2^2+\left(\frac{3\|\mathfrak{F}^{\ast}\|_{0,\alpha,\mathcal{N}_{L,f_\ast}^+}}{a_{22}}\right)x_2+\frac{5\|\mathfrak{F}^{\ast}\|_{0,\alpha,\mathcal{N}_{L,f_\ast}^+}}{a_{22}}.$$
Since $a_{22}>\nu>0$ in $\mathcal{N}_{L,f_{\ast}}^+$ by \eqref{nu}, $\mathfrak{M}$ is well-defined.
And, by a direct computation, one can easily check that $\mathfrak{M}\pm\phi_{\rm hom}$ satisfies
\begin{equation*}
\left\{\begin{split}
\mathcal{L}(\mathfrak{M}\pm\phi_{\rm hom})=-2\|\mathfrak{F}^{\ast}\|_{0,\alpha,\mathcal{N}_{L,f_\ast}^+}\pm \mathcal{L}(\phi_{\rm hom})\le0\quad&\mbox{in}\quad\mathcal{N}_{L,f_\ast}^+,\\
\mathfrak{M}\pm\phi_{\rm hom}\ge0\quad&\mbox{on}\quad\Gamma_{\rm en}^+,\\
\mathfrak{M}\pm\phi_{\rm hom}=\mathfrak{M}\ge0\quad&\mbox{on}\quad\Gamma_{\rm cd}^{L,f_{\ast}}\cup\Gamma^{L,f_{\ast}}_{\rm ex},\\
\partial_{x_2}(\mathfrak{M}\pm\phi_{\rm hom})=\partial_{x_2}\mathfrak{M}\ge0\quad&\mbox{on}\quad\Gamma_{\rm w}^{+,L}.
\end{split}\right.
\end{equation*}
Since $\mathcal{L}$ is uniformly elliptic, the comparison principle and Hopf's lemma imply
$-\mathfrak{M}\le\phi_{\rm hom}\le\mathfrak{M}$ in $\mathcal{N}_{L,f_\ast}^+$.
Therefore we get 
$$\|\phi_{\rm hom}\|_{0,\mathcal{N}_{L,f_\ast}^+}\le C\|\mathfrak{F}^{\ast}\|_{\alpha,\mathcal{N}_{L,f_\ast}^+}.$$

To obtain $C^{2,\alpha}$ estimate of $\phi$ up to the boundary, 
we use the method of reflection.
By the compatibility conditions of $(S_\ast,\tilde\phi)$ given in \eqref{S-set} and \eqref{phi-set}, and $\partial_{x_1}\psi\equiv0$ on $\Gamma_{\rm en}^{\epsilon}\cup\Gamma^{L,f_\ast}_{\rm ex}$ given from \eqref{lin-psi}, we have 
\begin{equation}\label{F-0-ex}
\mathfrak{F}=\mbox{div}{\bf F}(S_{\ast}-S_0^+,\nabla\tilde{\phi},\nabla{\psi})\equiv0\quad\mbox{on}\quad \Gamma_{\rm en}^{\epsilon}\cup\Gamma^{L,f_{\ast}}_{\rm ex}.
\end{equation}
From the definition of $\varphi_{\rm en}^{\ast}$ given in \eqref{def-var-en}, the compatibility condition of $f_\ast$ given in \eqref{f-set}, and the definition of $\eta$ given in \eqref{def-eta}, it can be directly checked that 
\begin{equation}\label{varphi-ex}
\partial_{x_1x_1}\varphi_{\rm en}^{\ast}=\partial_{x_2x_2}\varphi_{\rm en}^{\ast}\equiv0\quad\mbox{on}\quad \Gamma_{\rm en}^{\epsilon}\cup\Gamma^{L,f_{\ast}}_{\rm ex}.
\end{equation}
By \eqref{F-0-ex}, \eqref{varphi-ex}, and the definition of $\mathfrak{F}^{\ast}$ given in \eqref{Fast-def}, we have 
${\mathfrak{F}^{\ast}}\equiv 0$ on $\Gamma_{\rm en}^\epsilon\cup\Gamma^{L,f_{\ast}}_{\rm ex}$.
By using this compatibility condition of $\mathfrak{F}^{\ast}$ and the method of reflection, we get 
$$\|\phi_{\rm hom}\|_{2,\alpha,\mathcal{N}_{L,f_{\ast}}^+}\le C\|\mathfrak{F}^{\ast}\|_{\alpha,\mathcal{N}_{L,f_{\ast}}^+}.$$
Therefore the linear boundary value problem \eqref{lin-phi} has a unique solution $\phi=\phi_{\rm hom}+{\varphi_{\rm en}^{\ast}}\in C^{2,\alpha}(\overline{\mathcal{N}_{L,f_{\ast}}^+})$, and $\phi$ satisfies
\begin{equation*}
\begin{split}
\|\phi\|_{2,\alpha,\mathcal{N}_{L,f_{\ast}}^+}
&\le C\left(\|\mathfrak{F}\|_{\alpha,\mathcal{N}_{L,f_{\ast}}^+}+\|\varphi_{\rm en}\|_{2,\alpha,\Gamma_{\rm en}^+}\right).
\end{split}
\end{equation*}

It follows from \eqref{lin-phi-hom} and \eqref{varphi-ex} that
\begin{equation}\label{phi-0-ex}
\partial_{x_2x_2}\phi\equiv0 \quad\mbox{on}\quad\Gamma_{\rm en}^{\epsilon}\cup\Gamma^{L,f_{\ast}}_{\rm ex}.
\end{equation}
By \eqref{F-0-ex} and \eqref{phi-0-ex}, we have 
$\mathcal{L}(\phi)=a_{11}\partial_{x_1x_1}\phi\equiv0$ on $\Gamma_{\rm en}^{\epsilon}\cup\Gamma^{L,f_{\ast}}_{\rm ex}.$
Since $a_{11}>0$, we conclude that 
$\partial_{x_1x_1}\phi=0$ on $\Gamma_{\rm en}^{\epsilon}\cup\Gamma^{L,f_{\ast}}_{\rm ex}.$
The proof of Claim is completed.
\end{proof}

{\bf 5.} For fixed $(S_\ast,f_\ast)\in\mathcal{S}(M_1)\times\mathcal{F}(M_2)$, define an iteration mapping $\mathcal{I}^{f_{\ast},S_{\ast}}:\mathcal{K}^{f_\ast}(M_3,M_4)\rightarrow \left[C^{2,\alpha}(\overline{\mathcal{N}_{L,f_\ast}^+})\right]^2$ by 
\begin{equation*}
\mathcal{I}^{f_\ast,S_\ast}(\tilde{\phi},\tilde{\psi})=(\phi,\psi),
\end{equation*}
where $(\phi,\psi)$ is the solution to \eqref{lin-psi} and \eqref{lin-phi} associated with $(\tilde{\phi},\tilde{\psi})$.

By a direct computation, one can easily check that there exists a constant $\epsilon_1\in(0,\frac{1}{4})$ depending only on the data so that if 
\begin{equation}\label{def-ep1}
M_1\sigma+M_2\sigma+M_3\sigma+M_4\sigma\le\epsilon_1,
\end{equation}
 then we have 
\begin{equation}\label{FAG}
\left.\begin{split}
&\|\mathfrak{F}\|_{\alpha,\mathcal{N}_{L,f_{\ast}}^+}\le C\left(M_1\sigma+(M_3\sigma)^2+M_4\sigma\right),\\
&\|\mathfrak{B}\|_{1,\alpha,\Gamma_{\rm cd}^{L,f_{\ast}}}\le C\left(M_1\sigma+(M_2\sigma)^2\right),\\
&\|\mathfrak{G}\|_{\alpha,\mathcal{N}_{L,f_{\ast}}^+}\le C M_1\sigma,
\end{split}\right.
\end{equation}
where $\mathfrak{F}$, $\mathfrak{B}$, and $\mathfrak{G}$ are given by \eqref{FF_def} and \eqref{GB-def}.
Then it follows from \eqref{psi-est-2D}, \eqref{phi-est-2D}, and \eqref{FAG} that
\begin{equation}\label{2D-pp-est1}
\left.\begin{split}
&\|\psi\|_{2,\alpha,\mathcal{N}_{L,f_{\ast}}^+}\le C_1^{\flat}\left(M_1\sigma+(M_2\sigma)^2\right),\\
&\|\phi\|_{2,\alpha,\mathcal{N}_{L,f_{\ast}}^+}\le C_1^{\flat}\left(M_1\sigma+(M_3\sigma)^2+M_4\sigma+\sigma\right),
\end{split}\right.
\end{equation}
where the constant $C_1^{\flat}>0$ depends only on the data and $\alpha$ but independent of $L$.
Choose constants $M_3$, $M_4$, and $\sigma_6^{\ast}$ as
\begin{equation}\label{2D-sigma8}
\begin{split}
&M_3=\max\left\{4C_1^{\flat},4C_1^{\flat}M_1,4C_1^{\flat}M_4\right\},\quad M_4=2C_1^{\flat}M_1,\\
&\mbox{and}\quad\sigma_6^{\ast}=\min\left\{\frac{\epsilon_1}{M_1+M_2+M_3+M_4},\frac{1}{4C_1^{\flat}M_3},\frac{M_4}{2C_1^{\flat}M_2^2}\right\},
\end{split}
\end{equation}
where $\epsilon_1$ is given by \eqref{def-ep1},
so that \eqref{2D-pp-est1} implies that 
$(\phi,\psi)\in\mathcal{K}^{f_\ast}(M_3,M_4)$ for $\sigma\le\sigma_6^{\ast}$.
Under such choices of $(M_3,M_4,\sigma_6^{\ast})$, the iteration mapping $\mathcal{I}^{f_\ast,S_\ast}$ maps $\mathcal{K}^{f_\ast}(M_3,M_4)$ into itself if  $\sigma\le\sigma_6^{\ast}$.
Furthermore, $(\phi,\psi)$ satisfies the estimate
\begin{equation*}
\|\phi\|_{2,\alpha,\mathcal{N}_{L,f_{\ast}}^+}+\|\psi\|_{2,\alpha,\mathcal{N}_{L,f_{\ast}}^+}\le (M_3+M_4)\sigma\le C(M_1+1)\sigma.
\end{equation*}

 Now we show that $\mathcal{I}^{f_\ast,S_{\ast}}$ is a contraction mapping if $\sigma$ is small.
 
For each $j=1,2$, set
\begin{equation*}
\left\{\begin{split}
&(\phi^{(j)},\psi^{(j)}):=\mathcal{I}^{f_\ast,S_\ast}(\tilde{\phi}^{(j)},\tilde{\psi}^{(j)})\quad\mbox{for}\quad (\tilde{\phi}^{(j)},\tilde{\psi}^{(j)})\in\mathcal{K}^{f_\ast}(M_3,M_4),\\
&{\bf F}^{(j)}:=\mbox{div}{\bf F}(S_{\ast}-S_0^+,\nabla\tilde{\phi}^{(j)},\nabla\psi^{(j)}),\\
&G^{(j)}:=G(S_{\ast},\partial_{x_2}S_{\ast},\nabla\tilde{\phi}^{(j)}+\nabla\varphi_0,\nabla\tilde{\psi}^{(j)}),
\end{split}\right.
\end{equation*}
where ${\bf F}$ and $G$ are given by \eqref{2D-def-F} and \eqref{2D_FGH}, respectively. 
By a direct computation, it can be checked that there exists a constant $\epsilon_2\in(0,\epsilon_1]$ depending only on the data so that if 
$$M_1\sigma+M_2\sigma+M_3\sigma+M_4\sigma\le \epsilon_2,$$
then we have 
\begin{equation}\label{FG-diff}
\left.\begin{split}
&\|{\bf F}^{(1)}-{\bf F}^{(2)}\|_{\alpha,\mathcal{N}_{L,f_\ast}^+}\le C(M_1+1)\sigma\|\tilde{\phi}^{(1)}-\tilde{\phi}^{(2)}\|_{2,\alpha,\mathcal{N}_{L,f_\ast}^+}+C\|\psi^{(1)}-\psi^{(2)}\|_{2,\alpha,\mathcal{N}_{L,f_\ast}^+},\\
&\|G^{(1)}-G^{(2)}\|_{\alpha,\mathcal{N}_{L,f_\ast}^+}\le CM_1\sigma\left(\|\tilde{\phi}^{(1)}-\tilde{\phi}^{(2)}\|_{2,\alpha,\mathcal{N}_{L,f_\ast}^+}+\|\tilde{\psi}^{(1)}-\tilde{\psi}^{(2)}\|_{2,\alpha,\mathcal{N}_{L,f_\ast}^+}\right).
\end{split}\right.
\end{equation}
Then, by \eqref{psi-est-2D}, \eqref{phi-est-2D}, and \eqref{FG-diff}, we have
\begin{equation*}
\begin{split}
\|\phi^{(1)}-&\phi^{(2)}\|_{2,\alpha,\mathcal{N}_{L,f_{\ast}}^+}+\|\psi^{(1)}-\psi^{(2)}\|_{2,\alpha,\mathcal{N}_{L,f_{\ast}}^+}\\
&\le C_2^{\flat}(M_1+1)\sigma\left(\|\tilde{\phi}^{(1)}-\tilde{\phi}^{(2)}\|_{2,\alpha,\mathcal{N}_{L,f_{\ast}}^+}+\|\tilde{\psi}^{(1)}-\tilde{\psi}^{(2)}\|_{2,\alpha,\mathcal{N}_{L,f_{\ast}}^+}\right)
\end{split}
\end{equation*}
for a constant $C_2^{\flat}>0$ depending only on the data and $\alpha$ but independent of $L$. 
Choose $\sigma_6$ as 
\begin{equation}\label{Sigma6}
\sigma_6=\min\left\{\sigma_6^{\ast},\frac{\epsilon_2}{M_1+M_2+M_3+M_4},\frac{1}{2C_2^{\flat}(M_1+1)}\right\}
\end{equation}
for $\sigma_6^{\ast}$ defined in \eqref{2D-sigma8}. Thus if $\sigma\le\sigma_6$, then the mapping $\mathcal{I}^{f_\ast,S_\ast}$ is a contraction mapping so that $\mathcal{I}^{f_\ast,S_\ast}$ has a unique fixed point in $\mathcal{K}^{f_\ast}(M_3,M_4)$.
This gives the unique existence of a solution to \eqref{Ite-3D}.
The proof of Lemma \ref{2D-fix} is completed.
\end{proof}

Now we are ready to prove the unique solvability of Problem \ref{FBP-S}.

\begin{proof}[Proof of Lemma \ref{S-free-2}]
{\bf 1.} 
Let $(\varphi,\psi)\in\left[C^{2,\alpha}(\overline{\mathcal{N}_{L,f_\ast}^+})\right]^2$ be a solution of the fixed boundary value problem \eqref{Ite-3D} associated with $S_{\ast}\in\mathcal{S}(M_1)$ 
that satisfies the estimate \eqref{2D-Fix-est} given  in Lemma \ref{2D-fix}.
For simplicity, we set 
\begin{equation*}
\rho^{\ast}:=H(S_{\ast},\nabla\varphi,\nabla\psi),
\quad (u_1^{\ast},u_2^{\ast}):=\nabla\varphi+\nabla^{\bot}\psi,
\end{equation*} 
where $H$ is given by \eqref{2D_FGH}.
As in the proof of Lemma \ref{2D-fix}, there exists a constant $\epsilon_3>0$ depending only on the data so that if 
\begin{equation}\label{epsilon3}
M_1\sigma+M_2\sigma+M_3\sigma+M_4\sigma\le \epsilon_3,
\end{equation}
then we have 
\begin{equation*}
\|\rho^{\ast}(u_1^{\ast},u_2^{\ast})-\rho_0^+(u_0,0)\|_{1,\alpha,\mathcal{N}_{L,f_\ast}^+}\le C_{\star}\left(M_1+1\right)\sigma,
\end{equation*}
where the constant $C_{\star}>0$ depends only on the data and $\alpha$ but independent of $L$.
If it holds that 
$$\sigma\le \frac{\rho_0^+u_0}{2C_{\star}\left(M_1+1\right)},$$
then we have
\begin{equation*}\label{rhouest}
\|\rho^{\ast}(u_1^{\ast},u_2^{\ast})-\rho_0^+(u_0,0)\|_{1,\alpha,\mathcal{N}_{L,f_\ast}^+}\le  \frac{\rho_0^+u_0}{2}.
\end{equation*}

For each $x_1\in[0,L]$, we choose $f(x_1)\in\mathbb{R}$ to satisfy 
\begin{equation}\label{New-f}
\int_{f(x_1)}^{f_\ast(x_1)}\rho_0^+u_0dt
=\int_0^1\rho^{\ast}u_1^{\ast}(0,t)dt-\int_{f_{\ast}(x_1)}^1\rho^{\ast}u_1^{\ast}(x_1,t)dt.
\end{equation}
If $f\equiv f_\ast$, then 
\begin{equation}\label{fix-f}
\int_0^1\rho^{\ast}u_1^{\ast}(0,t)dt=\int_{f(x_1)}^1\rho^{\ast}u_1^{\ast}(x_1,t)dt\quad\mbox{for all}\quad 0\le x_1\le L.
\end{equation} 
Differentiating the equation in \eqref{fix-f} with respect to $x_1$ and using the equation $\partial_{x_1}(\rho^{\ast}u_1^{\ast})+\partial_{x_2}(\rho^{\ast}u_2^{\ast})=0$, we have
\begin{equation*}
f'(x_1)=\frac{u_2^{\ast}}{u_1^{\ast}}(x_1,f(x_1))=\frac{(\nabla\varphi+\nabla^{\bot}\psi)\cdot{\bf e}_2}{(\nabla\varphi+\nabla^{\bot}\psi)\cdot{\bf e}_1}(x_1,f(x_1)).
\end{equation*}
Also, we have $f(0)=0$. 
Thus $f$ satisfies the free boundary condition \eqref{2D_BC_Fin2} for $0<x_1<L$. 
Since $\rho_0^+u_0>0$, \eqref{New-f} directly yields that 
\begin{equation}\label{2D-free-BC}
f(x_1)=f_\ast(x_1)-\frac{1}{\rho_0^+u_0}\int_0^1\rho^{\ast}u_1^{\ast}(0,t)dt+\frac{1}{\rho_0^+u_0}\int_{f_{\ast}(x_1)}^1\rho^{\ast}u_1^{\ast}(x_1,t)dt.
\end{equation}
By a direct computation with using \eqref{2D-free-BC}, we obtain the compatibility conditions $f(0)=f'(0)=f'(L)=0$ and 
the estimate
\begin{equation}\label{f-est-c2}
\|f\|_{2,\alpha,(0,L)}\le C_{\star\star}(M_1+1)\sigma
\end{equation}
for a constant $C_{\star\star}>0$ depending only on the data and $\alpha$ but independent of $L$.

We define an iteration mapping $\mathcal{I}^{S_\ast}:\mathcal{F}(M_2)\rightarrow C^{2,\alpha}([0,L])$  by
$$\mathcal{I}^{S_{\ast}}(f_{\ast})=f$$
for $f$ given by \eqref{2D-free-BC}.
Choose $M_2$ and $\sigma_5^{\ast}$ as 
\begin{equation}\label{2D-sigma4}
\begin{split}
&M_2=C_{\star\star}(M_1+1)\\
&\mbox{and}\quad\sigma_5^{\ast}=\min\left\{\sigma_6,\frac{\rho_0^+u_0}{2C_{\star}\left(M_1+1\right)},\frac{\epsilon_3}{M_1+M_2+M_3+M_4}\right\}
\end{split}
\end{equation}
with $\sigma_6$ defined in \eqref{Sigma6} and $\epsilon_3$ given in \eqref{epsilon3}.
Then the mapping $\mathcal{I}^{S_\ast}$ maps $\mathcal{F}(M_2)$ into itself if $\sigma\le \sigma_5^{\ast}$.

{\bf 2.} 
The iteration set $\mathcal{F}(M_2)$ given by \eqref{f-set}  is a convex and compact subset of $C^{2,\alpha/2}([0,L])$.
For each fixed $S_{\ast}\in\mathcal{S}(M_1)$, the iteration map $\mathcal{I}^{S_\ast}$ maps $\mathcal{F}(M_2)$ into itself where $M_2$ is chosen by \eqref{2D-sigma4}, and $\sigma\le\sigma_5^{\ast}$ for $\sigma_5^{\ast}$ from \eqref{2D-sigma4}.

Suppose that a sequence $\{f_{\ast}^{(k)}\}_{k=1}^{\infty}\subset\mathcal{F}(M_2)$ converges in $C^{2,\alpha/2}([0,L])$ to $f_{\ast}^{(\infty)}\in\mathcal{F}(M_2)$. 
For each $k\in\mathbb{N}\cup\{\infty\}$, set
\begin{equation*}
f^{(k)}:=\mathcal{I}^{S_{\ast}}(f_{\ast}^{(k)}).
\end{equation*}
And, let $\mathcal{M}^{(k)}:=(\varphi^{(k)},\psi^{(k)})\in\left[C^{2,\alpha}(\overline{\mathcal{N}_{L,f_\ast^{(k)}}^+})\right]^2$ be the unique solution of \eqref{Ite-3D} associated with $f_{\ast}=f_{\ast}^{(k)}$.
Define a transformation $T^{(k)}:\overline{\mathcal{N}_{L,f_{\ast}^{(\infty)}}^+}\rightarrow\overline{\mathcal{N}_{L,f_{\ast}^{(k)}}^+}$ by 
\begin{equation*}
T^{(k)}(x_1,x_2)=\left(x_1,\frac{1-f_\ast^{(k)}(x_1)}{1-f_{\ast}^{(\infty)}(x_1)}(x_2-1)+1\right)\quad\mbox{for}\quad (x_1,x_2)\in\overline{\mathcal{N}_{L,f_{\ast}^{(\infty)}}^+}.
\end{equation*}
Then $\{\mathcal{M}^{(k)}\circ T^{(k)}\}_{k=1}^{\infty}$ is sequentially compact in $\left[C^{2,\alpha/2}(\overline{\mathcal{N}_{L,f_\ast^{(\infty)}}^+})\right]^2$ and the limit of each convergent subsequence of $\{\mathcal{M}^{(k)}\circ T^{(k)}\}_{k=1}^{\infty}$ in $ \left[C^{2,\alpha/2}(\overline{\mathcal{N}_{L,f_\ast^{(\infty)}}^+})\right]^2$ solves \eqref{Ite-3D} associated with $f_{\ast}=f_{\ast}^{(\infty)}$. 
By the uniqueness of a solution for the problem \eqref{Ite-3D},  $\{\mathcal{M}^{(k)}\circ T^{(k)}\}_{k=1}^{\infty}$ is convergent in $\left[C^{2,\alpha/2}(\overline{\mathcal{N}_{L,f_\ast^{(\infty)}}^+})\right]^2$. 
It follows from \eqref{2D-free-BC} and \eqref{f-est-c2} that $f^{(k)}$ converges to $f^{(\infty)}$ in $C^{2,\alpha/2}([0,L])$.
This implies that $\mathcal{I}^{S_\ast}(f_{\ast}^{(k)})$ converges to $\mathcal{I}^{S_\ast}(f_{\ast}^{(\infty)})$ in $C^{2,\alpha/2}([0,L])$.
Thus $\mathcal{I}^{S_\ast}$ is continuous in $C^{2,\alpha/2}([0,L])$.
Applying the Schauder fixed point theorem yields that $\mathcal{I}^{S_\ast}$ has a fixed point $f\in\mathcal{F}(M_2)$.
For such $f$, let $(\varphi,\psi)\in\left[C^{2,\alpha}(\overline{\mathcal{N}_{L,f}^+})\right]^2$ be the unique solution to the fixed boundary problem \eqref{Ite-3D} associated with $f_{\ast}=f$. 
Then $(f,\varphi,\psi)$ is a solution to Problem \ref{FBP-S}.
It follows from  \eqref{2D-Fix-est} and \eqref{f-est-c2} that $(f,\varphi,\psi)$ satisfies the estimate
\begin{equation*}
\|f\|_{2,\alpha,(0,L)}+\|\varphi-\varphi_0\|_{2,\alpha,\mathcal{N}_{L,f_{\ast}}^+}+\|\psi\|_{2,\alpha,\mathcal{N}_{L,f_{\ast}}^+}\le  C(M_1+1)\sigma.
\end{equation*}

{\bf 3.}
Finally, it remains to prove the uniqueness of a solution to Problem \ref{FBP-S}.
For a fixed $S_{\ast}\in\mathcal{S}(M_1)$, let 
$(f^{(1)},\varphi^{(1)},\psi^{(1)})$ and $(f^{(2)},\varphi^{(2)},\psi^{(2)})$ be two solutions to Problem \ref{FBP-S}, and suppose that each solution satisfies the estimate given in \eqref{S-free-est} of Lemma \ref{S-free-2}.
Define a transformation $\mathfrak{T}:\overline{\mathcal{N}_{L,f^{(1)}}^+}\rightarrow\overline{\mathcal{N}_{L,f^{(2)}}^+}$ by
\begin{equation}\label{Trans-12}
\mathfrak{T}(x_1,x_2)=\left(x_1,\frac{(1-f^{(2)}(x_1))x_2+(f^{(2)}(x_1)-f^{(1)}(x_1))}{1-f^{(1)}(x_1)}\right).
\end{equation}
Since $1-f^{(1)}>0$, the transformation $\mathfrak{T}$ is invertible and 
\begin{equation*}
\mathfrak{T}^{-1}(y_1,y_2)=\left(y_1,\frac{(1-f^{(1)}(y_1))y_2+(f^{(1)}(y_1)-f^{(2)}(y_1))}{1-f^{(2)}(y_1)}\right).
\end{equation*}
Set
\begin{equation*}
\left\{\begin{split}
&\widetilde{\phi}:=\varphi^{(1)}-\left(\varphi^{(2)}\circ\mathfrak{T}\right),\quad \widetilde{\psi}:=\psi^{(1)}-\left(\psi^{(2)}\circ\mathfrak{T}\right),\\
&\widetilde{S}:=S_{\ast}-\left(S_{\ast}\circ\mathfrak{T}\right),\quad \widetilde{f}:=f^{(1)}-f^{(2)}.
\end{split}\right.
\end{equation*}
We first rewrite the nonlinear boundary value problem \eqref{S-free} for $(\varphi^{(2)},\psi^{(2)})$ in $\mathcal{N}_{L,f^{(2)}}^+$ as a nonlinear boundary value problem for $(\varphi^{(2)}\circ\mathfrak{T},\psi^{(2)}\circ\mathfrak{T})$ in $\mathcal{N}_{L,f^{(1)}}^+$, and subtract the resultant equations and boundary conditions from the nonlinear boundary value problem \eqref{S-free} for $(\varphi^{(1)},\psi^{(1)})$ in $\mathcal{N}_{L,f^{(1)}}^+$.
Then we get a nonlinear boundary value problem for $(\widetilde{\phi},\widetilde{\psi})$ in $\mathcal{N}_{L,f^{(1)}}^+$.
By adjusting the proof of Lemma \ref{2D-fix} with using the estimate 
\begin{equation*}
\|\widetilde{S}\|_{1,\alpha,\mathcal{N}_{L,f^{(1)}}^+}\le CM_1\sigma\|\widetilde{f}\|_{2,\alpha,(0,L)},
\end{equation*} we obtain
\begin{equation*}
\begin{split}
\|\widetilde{\phi}\|_{2,\alpha,\mathcal{N}_{L,f^{(1)}}^+}+\|\widetilde{\psi}\|_{2,\alpha,\mathcal{N}_{L,f^{(1)}}^+}
\le& C_1^{\ast}(M_1+1)\sigma\left(\|\widetilde{\phi}\|_{2,\alpha,\mathcal{N}_{L,f^{(1)}}^+}+\|\widetilde{\psi}\|_{2,\alpha,\mathcal{N}_{L,f^{(1)}}^+}\right)\\
&+C(M_1+1)\sigma\|\widetilde{f}\|_{2,\alpha,(0,L)},
\end{split}
\end{equation*}
where the constant $ C_1^{\ast}>0$ depends only on the data and $\alpha$ but independent of $L$.
If it holds that 
$$\sigma\le \frac{1}{2C_1^{\ast}(M_1+1)},$$
then we obtain from the previous estimate that
\begin{equation}\label{R-g}
\|\widetilde{\phi}\|_{2,\alpha,\mathcal{N}_{L,f^{(1)}}^+}+\|\widetilde{\psi}\|_{2,\alpha,\mathcal{N}_{L,f^{(1)}}^+}\le C(M_1+1)\sigma\|\widetilde{f}\|_{2,\alpha,(0,L)}.
\end{equation}
By using the free boundary condition \eqref{2D_BC_Fin2}, we can express $\widetilde{f}'$ in terms of $(\widetilde{\phi},\widetilde{\psi},\mathfrak{T},D_{(x_1,x_2)}\mathfrak{T})$.
Then we apply \eqref{R-g} to obtain the estimate 
\begin{equation}\label{f-1-alpha}
\begin{split}
\|{\widetilde{f}}'\|_{1,\alpha,(0,L)}&\le C\left(\|\widetilde{\phi}\|_{2,\alpha,\mathcal{N}_{L,f^{(1)}}^+}+\|\widetilde{\psi}\|_{2,\alpha,\mathcal{N}_{L,f^{(1)}}^+}\right)+C(M_1+1)\sigma\|\widetilde{f}\|_{2,\alpha,(0,L)}\\
&\le C(M_1+1)\sigma\|\widetilde{f}\|_{2,\alpha,(0,L)}.
\end{split}
\end{equation}

Now we  estimate $\|\widetilde{f}\|_{0,(0,L)}$.
Define $\rho^{(1)}$, $u_1^{(1)}$, $\rho^{(2)}$, and $u_1^{(2)}$ by 
\begin{equation*}
\begin{split}
&\rho^{(1)}:=H(S_{\ast},\nabla\varphi^{(1)},\nabla\psi^{(1)}),\quad u_1^{(1)}:=\partial_{x_1}\varphi^{(1)}+\partial_{x_2}\psi^{(1)},\\
&\rho^{(2)}:=H(S_{\ast},D\varphi^{(2)},D\psi^{(2)}),\quad u_1^{(2)}:=\partial_{y_1}\varphi^{(2)}+\partial_{y_2}\psi^{(2)},
\end{split}
\end{equation*}
where $H$ is given in \eqref{2D_FGH}, $D=(\partial_{y_1},\partial_{y_2})$.
By using \eqref{fix-f}, we get 
\begin{equation}\label{F12}
\begin{split}
\int_0^1&\left(\rho^{(1)}u_1^{(1)}-\rho^{(2)}u_1^{(2)}\right)(0,t)dt\\
&=\int_{f^{(1)}(x_1)}^1\rho^{(1)}u_1^{(1)}(x_1,t)dt-\int_{f^{(2)}(x_1)}^1\rho^{(2)}u_1^{(2)}(x_1,t)dt.
\end{split}
\end{equation}
Fix $x_0\in[0,L]$. 
Without loss of generality, we may assume that 
$$f^{(1)}(x_0)\ge f^{(2)}(x_0).$$
Then \eqref{F12} can be rewritten as 
\begin{equation*}\label{F12-re}
\begin{split}
\int_0^1&\left(\rho^{(1)}u_1^{(1)}-\rho^{(2)}u_1^{(2)}\right)(0,t)dt\\
&=\int_{f^{(1)}(x_0)}^1\left(\rho^{(1)}u_1^{(1)}-\rho^{(2)}u_1^{(2)}\right)(x_0,t)dt-\int_{f^{(2)}(x_0)}^{f^{(1)}(x_0)}\rho^{(2)}u_1^{(2)}(x_0,t)dt.
\end{split}
\end{equation*}
By applying \eqref{R-g}, we have
\begin{equation*}
0\le f^{(1)}(x_0)-f^{(2)}(x_0)\le C(M_1+1)\sigma\|\widetilde{f}\|_{2,\alpha,(0,L)}.
\end{equation*}
Combining this with \eqref{f-1-alpha}, we finally get 
\begin{equation}\label{Final-f-est}
\|\widetilde{f}\|_{2,\alpha,(0,L)}\le C_2^{\ast}(M_1+1)\sigma\|\widetilde{f}\|_{2,\alpha,(0,L)}.
\end{equation}
where the constant $C_2^{\ast}>0$ depends only on the data and $\alpha$ but independent of $L$.
Choose $\sigma_5$ as
\begin{equation}\label{sigma5}
\sigma_5=\min\left\{\sigma_5^{\ast},\frac{1}{2C_1^{\ast}(M_1+1)},\frac{1}{2C_2^{\ast}(M_1+1)}\right\}
\end{equation}
for $\sigma_5^{\ast}$ defined in \eqref{2D-sigma4}.
Then \eqref{Final-f-est} implies that $f^{(1)}=f^{(2)}$. 
By Lemma \ref{2D-fix}, $(\varphi^{(1)},\psi^{(1)})=(\varphi^{(2)},\psi^{(2)}).$
The proof of Lemma \ref{S-free-2} is completed.
\end{proof}

\subsection{Proof of Proposition \ref{3D-Prop}}\label{Prop-sec}


For a fixed $S_{\ast}\in\mathcal{S}(M_1)$, let $(f,\varphi,\psi)\in C^{2,\alpha}([0,L])\times\left[C^{2,\alpha}(\overline{\mathcal{N}_{L,f}^+})\right]^2$ be a solution to Problem \ref{FBP-S}. 
By Lemma \ref{S-free-2}, if $\sigma\le \sigma_5$ for $\sigma_5$ given in \eqref{sigma5}, then there exists a unique solution $(f,\varphi,\psi)$ that satisfies the estimate \eqref{S-free-est}.

\begin{lemma}\label{2D-trans}
Under the same assumptions on $(S_{\rm en}, v_{\rm en})$ as in {\emph{Proposition \ref{3D-Prop}}}, 
there exists a small constant $\sigma_4^{\ast\ast}\in(0,\sigma_5]$ depending only on the data and $\alpha$ so that if 
$$\sigma=\|S_{en}-S_0\|_{2,\alpha,\Gamma_{\rm en}^+}+\|v_{\rm en}\|_{1,\alpha,\Gamma_{\rm en}^+}\le\sigma_4^{\ast\ast},$$ then 
the initial value problem \eqref{Ite-3D2} has a unique solution $S\in C^{2,\alpha}(\overline{\mathcal{N}_{L,f}^+})$ satisfying
\begin{equation}\label{2D-def-C}
\|S-S_0^+\|_{2,\alpha,\mathcal{N}_{L,f}^+}\le C^{\ast}\|S_{\rm en}-S_0\|_{2,\alpha,\Gamma_{\rm en}^+},
\end{equation}
where the constant $C^{\ast}>0$ depends only on the data and $\alpha$ but independent of $L$.
\end{lemma}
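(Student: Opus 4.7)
The plan is to solve the transport equation in \eqref{Ite-3D2} by the method of characteristics. Since the estimate \eqref{S-free-est} forces $(\varphi,\psi)$ to be a small $C^{2,\alpha}$-perturbation of $(\varphi_0,0)$ for $\sigma$ small, one has $H(S_{\ast},\nabla\varphi,\nabla\psi)>0$ and $u_1^{\ast}:=\partial_{x_1}\varphi+\partial_{x_2}\psi\ge u_0/2$ uniformly in $\overline{\mathcal{N}_{L,f}^+}$, so the equation is equivalent to $\mathbf{u}_{\ast}\cdot\nabla S=0$ with $\mathbf{u}_{\ast}:=\nabla\varphi+\nabla^{\bot}\psi$. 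The slip conditions on $\Gamma_{\rm w}^{+,L}$ in \eqref{S-free} and the free boundary equation \eqref{2D_BC_Fin2} for $\Gamma_{\rm cd}^{L,f}$ say exactly that the upper wall and the contact discontinuity are themselves integral curves of $\mathbf{u}_{\ast}$, so no data on $\Gamma_{\rm w}^{+,L}$, $\Gamma_{\rm cd}^{L,f}$, or $\Gamma_{\rm ex}^{L,f}$ is needed.

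First, I would introduce the flow $\mathcal{X}(\,\cdot\,;x_1^0,x_2^0)$ solving the scalar ODE
\begin{equation*}
\frac{dx_2}{dx_1}=\frac{u_2^{\ast}}{u_1^{\ast}}(x_1,x_2(x_1)),\qquad x_2(x_1^0)=x_2^0,
\end{equation*}
and define the entry-point map $\tau:\overline{\mathcal{N}_{L,f}^+}\to[0,1]$ by $\tau(x_1,x_2):=\mathcal{X}(0;x_1,x_2)$. The characterization above of $\Gamma_{\rm w}^{+,L}$ and $\Gamma_{\rm cd}^{L,f}$ as characteristics, together with $u_1^{\ast}>0$, guarantees that $\tau$ is well-defined, single-valued, and surjective onto $[0,1]$, with $\tau(0,x_2)=x_2$. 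I would then set $S(x_1,x_2):=S_{\rm en}(\tau(x_1,x_2))$; this $S$ is constant along characteristics, satisfies $S=S_{\rm en}$ on $\Gamma_{\rm en}^+$, and is the unique $C^1$ solution of \eqref{Ite-3D2} by the standard characteristic uniqueness argument.

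Second, for the $C^{2,\alpha}$ estimate I would invoke classical parameter regularity for ODEs: since $u_2^{\ast}/u_1^{\ast}\in C^{1,\alpha}$ with norm bounded by $C(M_1+1)\sigma$ by \eqref{S-free-est}, a Gr\"onwall-type argument yields $\|\tau(x_1,x_2)-x_2\|_{2,\alpha,\mathcal{N}_{L,f}^+}\le C(M_1+1)\sigma$. Applying the chain rule and using $\tau(0,\cdot)=\mathrm{id}$, together with $S_0\equiv S_0^+$ on $\Gamma_{\rm en}^+$, gives
\begin{equation*}
\|S-S_0^+\|_{2,\alpha,\mathcal{N}_{L,f}^+}\le C\bigl(1+\|\tau-\mathrm{id}\|_{2,\alpha,\mathcal{N}_{L,f}^+}\bigr)^2\|S_{\rm en}-S_0^+\|_{2,\alpha,\Gamma_{\rm en}^+},
\end{equation*}
and one chooses $\sigma_4^{\ast\ast}\in(0,\sigma_5]$ small enough to absorb the prefactor and obtain \eqref{2D-def-C}.

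The main obstacle I anticipate is twofold. First, $L$-independence of $C^{\ast}$: this rests on the observation that $\mathcal{X}$ is governed by an initial value problem in the forward $x_1$-variable whose driving field has $C^{1,\alpha}$-norm of size $O((M_1+1)\sigma)$, so the Gr\"onwall estimate accumulates only in a multiplicative factor of the form $\exp(C(M_1+1)\sigma\cdot L)$; however, because the field itself has norm $O(\sigma)$ rather than $O(1)$, one instead exploits the representation $\tau(x_1,x_2)=x_2-\int_0^{x_1}(u_2^{\ast}/u_1^{\ast})(s,\mathcal{X}(s;x_1,x_2))\,ds$ combined with the fact that the characteristic stays in the bounded strip $\{-1/2\le x_2\le 1\}$, which gives an $L$-independent bound. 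Second, $C^{2,\alpha}$ regularity of $\tau$ at the corners where $\Gamma_{\rm en}^+$ meets $\Gamma_{\rm w}^{+,L}$ and $\Gamma_{\rm cd}^{L,f}$: this uses the compatibility $v_{\rm en}\equiv 0$ on $\Gamma_{\rm en}^{\epsilon}$ from \eqref{en-epsilon}, which makes $u_2^{\ast}/u_1^{\ast}$ vanish to the appropriate order near these corners, allowing a reflection argument analogous to the one used in the proof of Lemma \ref{2D-fix} to extend $\tau$ smoothly across the boundary.
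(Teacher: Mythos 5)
Your construction is the right one, and in fact coincides with what the paper does: the entry-point map $\tau$ you define equals the function $\mathcal{Y}_0=\mathcal{G}^{-1}\circ w$ in the paper's notation, where $w(x_1,x_2)=\int_1^{x_2}\rho^*u_1^*(x_1,t)\,dt$ is the mass-flux stream function and $\mathcal{G}(x_2)=w(0,x_2)$. The place where your argument is genuinely incomplete is precisely the $L$-independence of the $C^{2,\alpha}$ bound for $\tau$, and your proposed fix does not close the gap. Differentiating the representation $\tau(x_1,x_2)=x_2-\int_0^{x_1}(u_2^*/u_1^*)(s,\mathcal{X}(s;x_1,x_2))\,ds$ in $x_1$ or $x_2$ brings in the variational derivatives $\partial_{x_1}\mathcal{X}$, $\partial_{x_2}\mathcal{X}$, and a naive Gr\"onwall bound for these gives a factor $\exp\bigl(C\|\partial_{x_2}(u_2^*/u_1^*)\|_0\,L\bigr)\sim\exp(C\sigma L)$, which is \emph{not} uniform in $L$ no matter how small $\sigma$ is; the fact that $\mathcal{X}$ stays in a bounded strip controls $\tau$ itself in $C^0$, but not $D\tau$ or $D^2\tau$. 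The missing structural observation, which the paper exploits by writing $\mathcal{Y}_0=\mathcal{G}^{-1}\circ w$ outright, is that the streamline flow preserves the mass flux: $w(s,\mathcal{X}(s;x_1,x_2))\equiv w(x_1,x_2)$, so
\begin{equation*}
\partial_{x_2}\mathcal{X}(s;x_1,x_2)=\frac{\rho^*u_1^*(x_1,x_2)}{\rho^*u_1^*(s,\mathcal{X}(s;x_1,x_2))},
\end{equation*}
which is uniformly bounded above and below, independently of $L$, once $\rho^*u_1^*$ is pinned near $\rho_0^+u_0$. Passing through the stream function therefore replaces the cumulative Gr\"onwall estimate by a pointwise quotient, and the $C^{2,\alpha}$ bound $\|\mathcal{Y}_0\|_{2,\alpha}\le C\|{\bf V}\|_{1,\alpha}$ then follows from the chain rule applied to the explicit composition $\mathcal{G}^{-1}\circ w$, with $\mathcal{G}'$ bounded away from zero. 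You should replace your Gr\"onwall step with this stream-function identity; the rest of your outline, including the use of $v_{\rm en}\equiv 0$ on $\Gamma_{\rm en}^\epsilon$ for corner compatibility and the reduction to the tangential characteristics on $\Gamma_{\rm w}^{+,L}$ and $\Gamma_{\rm cd}^{L,f}$, is consistent with the paper's argument.
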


As in \cite{B-D-X}, we can obtain a solution 
\begin{equation}\label{2D-S-EN}
S({\bf x})=S_{\rm en}\circ\mathcal{Y}_0({\bf x}),
\end{equation}
where the function $\mathcal{Y}_0$ is defined by
\begin{equation}\label{y0-def}
\mathcal{Y}_0({\bf x}):=\mathcal{G}^{-1}\circ w({\bf x})\quad\mbox{for}\quad {\bf x}\in\overline{\mathcal{N}_{L,f}^+}.
\end{equation}
Here, $\mathcal{G}:[0,1]\rightarrow [w(0,0),w(0,1)]$ is an invertible function such that
\begin{equation}\label{mathcalG}
\mathcal{G}(x_2)=w(0,x_2)\quad\mbox{for}\quad x_2\in[0,1]
\end{equation}
and
$w$ is a function defined by 
\begin{equation*}\label{def-w}
w(x_1,x_2):=\int_{1}^{x_2}{\bf V}\cdot{\bf e}_1(x_1,t)dt\quad\mbox{for}\quad (x_1,x_2)\in\overline{\mathcal{N}_{L,f}^+}
\end{equation*}
for
${\bf V}:=H(S_{\ast},\nabla\varphi,\nabla\psi)(\nabla\varphi+\nabla^{\bot}\psi)$, where $H$ is given by \eqref{2D_FGH}.
Moreover, $\mathcal{Y}_0$ satisfies
\begin{equation*}\label{2D-nablaY0}
\|\mathcal{Y}_0\|_{2,\alpha,\mathcal{N}_{L,f}^+}\le C\|{\bf V}\|_{1,\alpha,\mathcal{N}_{L,f}^+}.
\end{equation*}
Thus we have
\begin{equation*}
\|S-S_0^+\|_{2,\alpha,\mathcal{N}_{L,f}^+}=\|S_{\rm en}\circ\mathcal{Y}_0-S_0^+\|_{2,\alpha,\mathcal{N}_{L,f}^+}\le C^{\ast}\|S_{\rm en}-S_0\|_{2,\alpha,\Gamma_{\rm en}^+}
\end{equation*}
for a constant $C^{\ast}>0$ depending only on the data and $\alpha$ but independent of $L$.

For $\mathcal{N}_L$ given by \eqref{def-NL} and $\mathcal{N}_{L,2f-1}^+:=\mathcal{N}_L\cap\{x_2>2f(x_1)-1\}$,
consider a transformation $\mathfrak{P}_f:\mathcal{N}_{L,2f-1}^+\rightarrow\mathcal{N}_L$ defined by 
\begin{equation*}
\mathfrak{P}_f(x_1,x_2)=\left(x_1,\frac{x_2-f(x_1)}{1-f(x_1)}\right)\quad\mbox{for}\quad (x_1,x_2)\in\mathcal{N}_{L,2f-1}^+.
\end{equation*}
Then $\mathfrak{P}_f$ is invertible and 
\begin{equation*}
\mathfrak{P}_f^{-1}(y_1,y_2)=(y_1,(1-y_2)f(y_1)+y_2)\quad\mbox{for}\quad (y_1,y_2)\in\mathcal{N}_L.
\end{equation*}
For the unique solution $S$ of the initial-value problem \eqref{Ite-3D2}, 
define a function $S^e$ by
\begin{equation*}
S^e(y_1,y_2):=\sum_{i=1}^3 c_i\left(S\circ\mathfrak{P}_f^{-1}\right)\left(y_1,-\frac{y_2}{i}\right)\quad \mbox{for}\quad -1<y_2<0,
\end{equation*}
where $c_1=6$, $c_2=-32$, and $c_3=27$, which are determined by the system of equations 
\begin{equation*}
\begin{split}
&\sum_{i=1}^3 c_i\left(-\frac{1}{i}\right)^m=1,\quad m=0,1,2.\\
\end{split}
\end{equation*}
With such $S^e$, define an extension of $S$ into $\mathcal{N}_{L,-1/2}^+$ as follows:
\begin{equation}\label{Sf-def}
S^f(x_1,x_2):=\left\{\begin{split}
S(x_1,x_2)\quad\mbox{for}&\quad f(x_1)\le x_2< 1,\\
{S}^e\circ\mathfrak{P}_f(x_1,x_2)\quad\mbox{for}&\quad -\frac{1}{2}< x_2<f(x_1).
\end{split}\right. 
\end{equation}
Then 
\begin{equation*}
\|S^f-S_0^+\|_{2,\alpha,\mathcal{N}_{L,-1/2}^+}\le C^{\ast\ast}\|S-S_0^+\|_{2,\alpha,\mathcal{N}_{L,f}^+}
\end{equation*}
for a constant $C^{\ast\ast}>0$ depending only on the data and $\alpha$ but independent of $L$.
By Lemma \ref{2D-trans}, we have the estimate 
\begin{equation}\label{Sf-est}
\|S^f-S_0^+\|_{2,\alpha,\mathcal{N}_{L,-1/2}^+}\le C^{\ast\ast}\|S-S_0^+\|_{2,\alpha,\mathcal{N}_{L,f}^+}\le C^{\ast\ast}C^{\ast}\|S_{\rm en}-S_0\|_{2,\alpha,\Gamma_{\rm en}^+}
\end{equation}
with a constant $C^{\ast}>0$ in \eqref{2D-def-C}.

By a direct computation, one can easily check that there exists a constant $\epsilon_4>0$ depending only on the data so that if 
$$M_1\sigma+M_2\sigma+M_3\sigma+M_4\sigma\le\epsilon_4,$$
 then we have 
\begin{equation*}
\|H(S_{\ast},\nabla\varphi,\nabla\psi)(\nabla\varphi+\nabla^{\bot}\psi)-(\rho_0^+u_0,0)\|_{1,\alpha,\mathcal{N}_{L,f}^+}\le C^{\ast\ast\ast}\left(M_1+1\right)\sigma
\end{equation*}
for a constant $C^{\ast\ast\ast}>0$ depending only on the data and $\alpha$ but independent of $L$.
If it holds that 
$$\sigma\le \frac{\rho_0^+u_0}{2C^{\ast\ast\ast}\left(M_1+1\right)},$$
then 
\begin{equation}\label{S-comp-1}
\|H(S_{\ast},\nabla\varphi,\nabla\psi)(\nabla\varphi+\nabla^{\bot}\psi)-(\rho_0^+u_0,0)\|_{1,\alpha,\mathcal{N}_{L,f}^+}\le \frac{\rho_0^+u_0}{2}.
\end{equation}
Also, by the boundary conditions \eqref{S-free} for $(\varphi,\psi)$ and the definition of $\varphi_{\rm en}$ given in \eqref{def-phi-en}, we have 
\begin{equation}\label{S-comp-2}
\partial_{x_2}\varphi+\partial_{x_1}\psi\equiv 0\quad\mbox{on}\quad\Gamma_{\rm en}^{\epsilon}\cup\Gamma_{\rm ex}^{L,f}.
\end{equation}
It follows from \eqref{S-comp-1}, \eqref{S-comp-2}, and the equation in \eqref{Ite-3D2} that  
$$\partial_{x_1}S^f\equiv 0\quad\mbox{on}\quad\Gamma_{\rm  en }^{\epsilon}\cup\Gamma_{\rm ex}^L.$$

We define an iteration mapping $\mathcal{J}:\mathcal{S}(M_1)\rightarrow C^{2,\alpha/2}(\overline{\mathcal{N}_{L}})$ by 
$$\mathcal{J}(S_{\ast})=S^f$$
for $S^f$ given by \eqref{Sf-def}.
Choose $M_1$ and $\sigma_4^{\ast}$ as
\begin{equation}\label{sigma4star}
M_1=C^{\ast\ast}C^{\ast}\,\mbox{ and }\,\sigma_4^{\ast}=\min\left\{\sigma_4^{\ast\ast},\frac{\epsilon_4}{M_1+M_2+M_3+M_4},\frac{\rho_0^+u_0}{2C^{\ast\ast\ast}\left(M_1+1\right)}\right\}
\end{equation}
for a constant $C^{\ast\ast}C^{\ast}>0$ in \eqref{Sf-est} and $\sigma_4^{\ast\ast}$ in Lemma \ref{2D-trans}.
Then the mapping $\mathcal{J}$ maps $\mathcal{S}(M_1)$ into itself if $\sigma\le\sigma_4^{\ast}$.

The iteration set $\mathcal{S}(M_1)$ given by \eqref{S-set} is a convex and compact subset of $C^{2,\alpha/2}(\overline{\mathcal{N}_{L,-1/2}^+})$. 
Suppose that a sequence $\{S_{\ast}^{(k)}\}_{k=1}^{\infty}\subset\mathcal{S}(M_1)$ converges in $C^{2,\alpha/2}(\overline{\mathcal{N}_{L,-1/2}^+})$ to $S_{\ast}^{(\infty)}\in\mathcal{S}(M_1)$.
For each $k\in\mathbb{N}\cup\{\infty\}$, set 
\begin{equation*}
S^{(k)}:=\mathcal{J}(S_{\ast}^{(k)}).
\end{equation*}
And, let $(f^{(k)},\varphi^{(k)},\psi^{(k)})\in C^{2,\alpha}([0,L])\times\left[C^{2,\alpha}(\overline{\mathcal{N}_{L,f^{(k)}}^+})\right]^2$ be the unique solution of Problem \ref{FBP-S} associated with $S_{\ast}=S_{\ast}^{(k)}$. 
By the uniqueness of a solution for Problem \ref{FBP-S}, $\{f^{(k)}\}_{k=1}^{\infty}$ is convergent in $C^{2,\alpha/2}([0,L])$. 
Denote its limit by $f^{(\infty)}$ and the unique solution to \eqref{Ite-3D} associated with $f_\ast=f^{(\infty)}$ and $S_{\ast}=S_{\ast}^{(\infty)}$ by $(\varphi^{(\infty)},\psi^{(\infty)})$.
Set 
\begin{equation*}
{\bf V}^{(k)}:=H\left(S^{(k)}_{\ast},\nabla\varphi^{(k)},\nabla\psi^{(k)}\right)\left(\nabla\varphi^{(k)}+\nabla^{\bot}\psi^{(k)}\right),
\end{equation*}
and define a transformation $T^{(k)}:\overline{\mathcal{N}_{L,f^{(\infty)}}^+}\rightarrow\overline{\mathcal{N}_{L,f^{(k)}}^+}$ by 
\begin{equation*}
T^{(k)}(x_1,x_2)=\left(x_1,\frac{1-f^{(k)}(x_1)}{1-f^{(\infty)}(x_1)}(x_2-1)+1\right).
\end{equation*}
Then ${\bf V}^{(k)}\circ T^{(k)}$ converges to 
\begin{equation*}
{\bf V}^{(\infty)}=H\left(S^{(\infty)}_{\ast},\nabla\varphi^{(\infty)},\nabla\psi^{(\infty)}\right)\left(\nabla\varphi^{(\infty)}+\nabla^{\bot}\psi^{(\infty)}\right)
\end{equation*}
in $C^{1,\alpha/2}(\overline{\mathcal{N}_{L,f^{(\infty)}}^+})$.
By Lemma \ref{2D-trans}, 
$S^{(k)}$ converges to $S^{(\infty)}$ in $C^{2,\alpha/2}(\overline{\mathcal{N}_{L,-1/2}^+})$.
This implies that $\mathcal{J}(S_{\ast}^{(k)})$ converges to $\mathcal{J}(S_{\ast}^{(\infty)})$ in $C^{2,\alpha/2}(\overline{\mathcal{N}_{L,-1/2}^+})$.
Thus $\mathcal{J}$ is a continuous map in $C^{2,\alpha/2}(\overline{\mathcal{N}_{L,-1/2}^+})$. 
Applying the Schauder fixed point theorem yields that $\mathcal{J}$ has a fixed point in $\mathcal{S}(M_1)$, say $S^f$. 
For such a fixed point $S^f$, let $(f,\varphi,\psi)\in C^{2,\alpha}([0,L])\times\left[C^{2,\alpha}(\overline{\mathcal{N}_{L,f}^+})\right]^2$ be the unique solution of Problem \ref{FBP-S}.
Then $(f,S,\varphi,\psi)$ solves the Problem \ref{Prob-Cut}  provided that $\sigma\le\sigma_4^{\ast}$.

Let 
$(f^{(1)},S^{(1)},\varphi^{(1)},\psi^{(1)})$ and  $(f^{(2)},S^{(2)},\varphi^{(2)},\psi^{(2)})$ be two solutions to Problem \ref{Prob-Cut}, and suppose that each solution satisfies the estimate given in \eqref{3D-Prop-est} of Proposition \ref{3D-Prop}.
For a transformation $\mathfrak{T}:\overline{\mathcal{N}_{L,f^{(1)}}^+}\rightarrow\overline{\mathcal{N}_{L,f^{(2)}}^+}$ defined by \eqref{Trans-12}, set 
\begin{equation*}
\left\{\begin{split}
& \widetilde{f}:=f^{(1)}-f^{(2)},\quad\widetilde{S}:=S^{(1)}-\left(S^{(2)}\circ\mathfrak{T}\right),\\
&\widetilde{\phi}:=\varphi^{(1)}-\left(\varphi^{(2)}\circ\mathfrak{T}\right),\quad\widetilde{\psi}:=\psi^{(1)}-\left(\psi^{(2)}\circ\mathfrak{T}\right),\\
\end{split}\right.
\end{equation*}
By a direct computation, one can easily check that there exists a constant $C_1^{\star}>0$ depending only on the data and $\alpha$ but independent of $L$ so that 
if $\sigma\le C_1^{\star}$, then 
\begin{equation}\label{S-estimate}
\begin{split}
\|\widetilde{S}\|_{1,\alpha,\mathcal{N}_{L,f^{(1)}}^+}
&\le C\sigma\left(\|\widetilde{\phi}\|_{2,\alpha,\mathcal{N}_{L,f^{(1)}}^+}+\|\widetilde{\psi}\|_{2,\alpha,\mathcal{N}_{L,f^{(1)}}^+}+\|\widetilde{f}\|_{2,\alpha,(0,L)}\right)\\
&\le C\sigma\|\widetilde{f}\|_{2,\alpha,(0,L)},
\end{split}
\end{equation}
where the constant $C>0$ depends only on the data and $\alpha$ but independent of $L$.
By adjusting the proof of Lemma \ref{S-free-2} with using the estimate \eqref{S-estimate}, we have the estimate
\begin{equation}\label{f-est-SS}
\|\widetilde{f}\|_{2,\alpha,(0,L)}\le C_2^{\star}\sigma\|\widetilde{f}\|_{2,\alpha,(0,L)}
\end{equation}
for a constant $C_2^{\star}>0$ depending only on the data and $\alpha$ but independent of $L$.
Choose $\sigma_4$ as
$$\sigma_4=\min\left\{\sigma_4^{\ast}, C_1^{\star}, \frac{1}{2C_2^{\star}}\right\}$$
for $\sigma_4^{\ast}$ defined in \eqref{sigma4star}.
Then we obtain from \eqref{f-est-SS} that $f^{(1)}=f^{(2)}$.
By \eqref{S-estimate}, we have $S^{(1)}=S^{(2)}$.
Therefore $(f^{(1)},S^{(1)},\varphi^{(1)},\psi^{(1)})=(f^{(2)},S^{(2)},\varphi^{(2)},\psi^{(2)})$ by Lemma \ref{S-free-2}.
The proof of Proposition \ref{3D-Prop} is completed.
\qed

\section{Free boundary problem in the infinitely long nozzle $\mathcal{N}$}\label{sec-ex}
\subsection{Proof of Theorem \ref{3D-HD}}
\label{subsec-ex}
Let $\sigma_4$ be from Proposition \ref{3D-Prop} and suppose that $\sigma\le\sigma_4$.
By Proposition \ref{3D-Prop}, Problem \ref{Prob-Cut} has a solution for each $L>0$.
For each $m\in\mathbb{N}$, let $(f^{(m)}, S^{(m)}, \varphi^{(m)},\psi^{(m)})$ be a solution of Problem \ref{Prob-Cut} in $\mathcal{N}_{m+20}:=\mathcal{N}\cap\{0<x_1<m+20\}$, and suppose that the solution satisfies the estimate \eqref{3D-Prop-est} given in Proposition \ref{3D-Prop}. 
Then, using the Arzel\'a-Ascoli theorem and a diagonal procedure, we can extract a subsequence, still written as $\{\left(f^{(m)}, S^{(m)},\varphi^{(m)},\psi^{(m)}\right)\}_{m\in\mathbb{N}}$ so that the subsequence converges to functions $(f^{\ast},S^{\ast},\varphi^{\ast},\psi^{\ast})$ in the following sense:
for any $L>0$,
\begin{itemize}
\item[(i)] $f^{(m)}$ converges to $f^{\ast}$ in $C^2$ in $[0,L]$;
\item[(ii)] $(S^{(m)}\circ T^{(m)},\varphi^{(m)}\circ T^{(m)},\psi^{(m)}\circ T^{(m)})$ converges to $(S^{\ast},\varphi^{\ast},\psi^{\ast})$ in $C^2$ in $\overline{\mathcal{N}_{L,f^{\ast}}^+}$,
where $T^{(m)}:  \overline{\mathcal{N}_{m+20,f^{\ast}}^+}\rightarrow\overline{\mathcal{N}_{m+20,f^{(m)}}^+}$ is defined by 
\begin{equation*}
{T}^{(m)}(x_1,x_2)=\left(x_1,\frac{1-f^{(m)}(x_1)}{1-f^{\ast}(x_1)}x_2+\frac{f^{(m)}(x_1)-f^{\ast}(x_1)}{1-f^{\ast}(x_1)}\right).
\end{equation*}
\end{itemize}
By a change of variables and passing to the limit $m\rightarrow\infty$, we can prove that 
$(f^{\ast},S^{\ast},\varphi^{\ast},\psi^{\ast})$ is a solution to the free boundary problem \eqref{H-E-S} with boundary conditions \eqref{Free-BC} and \eqref{2D_BC_2}. 
Furthermore, it follows from the $C^2$ convergence of $\{(f^{(m)},S^{(m)},\varphi^{(m)},\psi^{(m)})\}_{m\in\mathbb{N}}$ and the estimate \eqref{3D-Prop-est} given in Proposition \ref{3D-Prop} that $(f^{\ast},S^{\ast},\varphi^{\ast},\psi^{\ast})$ satisfy the estimate \eqref{Th4-est} for a constant $C>0$ depending only on the data and $\alpha$.
\qed

\subsection{Proof of Theorem \ref{3D-Far-Thm} (b)}\label{sec-far}
 To compute the asymptotic limit of the solution to Problem \ref{3D-Prob} at $x_1=\infty$, we follow the idea from \cite{Ch-D-X, X-X}.

Let $\sigma_1$ be from Theorem  \ref{3D-inf-Thm} (a).
By Theorem \ref{3D-inf-Thm} (a), if $\sigma\le\sigma_1$, then there exists a solution $(g_D,u_1,u_2,\rho,p)$ of Problem \ref{3D-Prob} satisfying the estimate \eqref{Th3-est}.

By the continuity equation $\mbox{div}(\rho {\bf u})=0$, the function $\mathfrak{h}$ given by 
\begin{equation*}
\mathfrak{h}(x_1,x_2):=\int_{1}^{x_2}\rho u_1(x_1,t)dt\quad\mbox{for}\quad (x_1,x_2)\in\overline{\mathcal{N}_{g_D}^+}
\end{equation*}
satisfies 
\begin{equation}\label{2D-st-xr}
\partial_{x_1}\mathfrak{h}=-\rho u_2,\quad\partial_{x_2}\mathfrak{h}=\rho u_1.
\end{equation}
By \eqref{2D-S-EN}-\eqref{mathcalG}, the entropy $S(=p/\rho^{\gamma})$ is represented as 
$$S({\rm x})=S_{\rm en}\circ\mathcal{Y}_0({\rm x})=S_{\rm en}\circ\mathcal{G}^{-1}(\mathfrak{h}({\rm x}))=:\mathcal{S}(\mathfrak{h}({\rm x})),\quad{\rm x}\in\overline{\mathcal{N}_{g_D}^+},$$
where $\mathcal{Y}_0$ and $\mathcal{G}$ are given by \eqref{y0-def} and \eqref{mathcalG}.
Since $S_{\rm en}$ and $\mathcal{G}^{-1}$ are differentiable, $\mathcal{S}$ is a differentiable function of $\mathfrak{h}$.
Set
$$\mathfrak{S}(\mathfrak{h}):=\frac{\gamma}{\gamma-1}\mathcal{S}(\mathfrak{h}).$$
Then, by the definition of the Bernoulli invariant \eqref{Ber}, we have 
\begin{equation}\label{2D-BER}
B_0^+\rho^2=\frac{1}{2}|\nabla\mathfrak{h}|^2+\mathfrak{S}(\mathfrak{h})\rho^{\gamma+1}\quad\mbox{in}\quad\overline{\mathcal{N}_{g_D}^+}.
\end{equation}
By differentiating the equation \eqref{2D-BER} with respect to $x_1$ and $x_2$, we have 
\begin{equation}\label{2D-partial-rho}
\begin{split}
&\partial_{x_1}\rho=-\frac{(\partial_{x_1}\mathfrak{h})(\partial_{x_1x_1}\mathfrak{h})+(\partial_{x_2}\mathfrak{h})(\partial_{x_1x_2}\mathfrak{h})+(\partial_{x_1}\mathfrak{h})\mathfrak{S}'\rho^{\gamma+1}}{(\gamma+1)\mathfrak{S}\rho^{\gamma}-2B_0^+\rho},\\
&\partial_{x_2}\rho=-\frac{(\partial_{x_1}\mathfrak{h})(\partial_{x_1x_2}\mathfrak{h})+(\partial_{x_2}\mathfrak{h})(\partial_{x_2x_2}\mathfrak{h})+(\partial_{x_2}\mathfrak{h})\mathfrak{S}'\rho^{\gamma+1}}{(\gamma+1)\mathfrak{S}\rho^{\gamma}-2B_0^+\rho},
\end{split}
\end{equation}
where $'$ denotes the derivative with respect to $\mathfrak{h}$.
Using \eqref{2D-st-xr}-\eqref{2D-partial-rho}, the equation 
\begin{equation}\label{ES-eq}
\partial_{x_1}(\rho u_1u_2)+\partial_{x_2}(\rho u_2^2)+\partial_{x_2} p=0\quad\mbox{in}\quad\mathcal{N}_{g_D}^+
\end{equation}
in \eqref{E-S} can be rewritten as 
\begin{equation}\label{2D-infty-div}
\nabla\cdot\left(\frac{\nabla\mathfrak{h}}{\rho}\right)=-\frac{1}{\gamma}\mathfrak{S}'\rho^{\gamma}\quad\mbox{in}\quad\mathcal{N}_{g_D}^+.
\end{equation}
Set 
$$\omega:=\partial_{x_1}\mathfrak{h},$$
and differentiate \eqref{2D-infty-div} with respect to $x_1$ to get the following equation for $\omega$:
\begin{equation}\label{2D-infty-omega}
\partial_i\left(\frac{\mathfrak{q}_{ij}}{\rho^2}\partial_j\omega\right)-\partial_i\left(\frac{\mathfrak{t}_2(\partial_i\mathfrak{h})}{\rho^2}\omega\right)
=\mathfrak{q}_1\omega+\mathfrak{q}_2(\partial_i\mathfrak{h})(\partial_i\omega)\quad\mbox{in}\quad\mathcal{N}_{g_D}^+,
\end{equation}
where $\mathfrak{q}_{ij}$, $\mathfrak{q}_1$, and $\mathfrak{q}_2$ are defined by 
\begin{equation*}\label{2D-qqq}
\begin{split}
&\mathfrak{q}_{ij}:=\rho\delta_{ij}-2\mathfrak{t}_1(\partial_i\mathfrak{h})(\partial_j\mathfrak{h}),\\
&\mathfrak{q}_1:=-\frac{1}{\gamma}\mathfrak{S}''(\mathfrak{h})\rho^{\gamma}-\mathfrak{S}'(\mathfrak{h})\rho^{\gamma-1}\mathfrak{t}_2,\\
&\mathfrak{q}_2:=-2\mathfrak{S}'(\mathfrak{h})\rho^{\gamma-1}\mathfrak{t}_1.
\end{split}
\end{equation*}
Here, $\mathfrak{t}_1$ and $\mathfrak{t}_2$ are given by 
\begin{eqnarray}
&\label{rho1}&\mathfrak{t}_1:=2\left(\frac{\partial\rho}{\partial|\nabla\mathfrak{h}|^2}\right)=-\frac{1}{\rho(c^2-|\nabla\mathfrak{h}|^2/\rho^2)},\\
&\label{rho2}&\mathfrak{t}_2:=\frac{\partial\rho}{\partial \mathfrak{h}}=-\frac{\mathfrak{S}'\rho^{\gamma+1}}{\rho(c^2-|\nabla\mathfrak{h}|^2/\rho^2)},
\end{eqnarray}
for $c=(\gamma-1)\mathfrak{S}\rho^{\gamma-1}$. 
Since $\varphi,\psi\in C^{3,\alpha}(\mathcal{N}_{g_D}^+)$ for ${\bf u}=\nabla\varphi+\nabla^{\bot}\psi$ by the standard elliptic theory \cite[Theorem 6.17]{G-T}, 
$\mathfrak{h}\in C^{3,\alpha}(\mathcal{N}_{g_D}^+)$ and the equation \eqref{2D-infty-omega} is well-defined.

By the boundary conditions \eqref{3D-BC-ent} and \eqref{2D-slip}, $\omega$ satisfies
\begin{equation*}\label{BC-omega}
\left.\begin{split}
\omega=-\rho v_{\rm en}\quad\mbox{on}\quad\Gamma_{\rm en}^+,\quad \omega=0\quad\mbox{on}\quad\Gamma_{\rm w}^+.\\
\end{split}\right.
\end{equation*}
In order to get a conormal boundary condition for \eqref{2D-infty-omega} on $\Gamma_{g_D}$, we compute
$\left(\frac{\mathfrak{q}_{1j}}{\rho^2}\partial_j\omega,\frac{\mathfrak{q}_{2j}}{\rho^2}\partial_j\omega\right)\cdot{\bf n}_{g_D}$ to get
\begin{equation}\label{co-boundary}
\left.\begin{split}
\left(\frac{\mathfrak{q}_{1j}}{\rho^2}\partial_j\omega,\frac{\mathfrak{q}_{2j}}{\rho^2}\partial_j\omega\right)\cdot{\bf n}_{g_D}=\widetilde{\mu}\omega\quad&\mbox{on}\quad\Gamma_{g_D}=\{x_2=g_D(x_1),\,\,x_1>0\}
\end{split}\right.
\end{equation}
for  $\widetilde{\mu}$ defined by 
\begin{equation}\label{2D-omega-BC}
\begin{split}
\widetilde{\mu}:=&-\frac{\mathfrak{q}_{11}\partial_{x_1x_1}\mathfrak{h}+\mathfrak{q}_{12}\partial_{x_1x_2}\mathfrak{h}-2\mathfrak{t}_1(\partial_{x_2}\mathfrak{h})^2\partial_{x_1x_1}\mathfrak{h}}{\rho^2(\partial_{x_2}\mathfrak{h})\sqrt{1+|g_D'|^2}}\\
&+\frac{\mathfrak{q}_{22}}{\rho^2\sqrt{1+|g_D'|^2}}\left\{\frac{\partial_{x_1x_1}\mathfrak{h}+\partial_{x_2x_2}\mathfrak{h}}{(\partial_{x_1}\mathfrak{h})g_D'+\partial_{x_2}\mathfrak{h}}\right\},
\end{split}
\end{equation}
where we represent ${\bf n}_{g_D}$ as 
$${\bf n}_{g_D}=-\frac{1}{\sqrt{1+|g_D'|^2}}\left(\frac{\omega}{\partial_{x_2}\mathfrak{h}},1\right).$$
By the definition of Bernoulli invariant \eqref{Ber},
\begin{equation}\label{2D-fxy-cc}
(\partial_{x_1}\mathfrak{h})^2+(\partial_{x_2}\mathfrak{h})^2=\mathcal{C}\quad\mbox{on}\quad\Gamma_{g_D}
\end{equation}
for $
\mathcal{C}:=2\left(B_0^+p_0^{2/\gamma}S_{\rm en}^{-2/\gamma}(0)-\frac{\gamma}{\gamma-1}p_0^{1+1/\gamma}S_{\rm en}^{-1/\gamma}(0)\right).
$
Differentiating the equation \eqref{2D-fxy-cc} in the tangential direction along $\Gamma_{g_D}$, we have 
\begin{equation*}
(\partial_{x_1}\mathfrak{h})(\partial_{x_1x_1}\mathfrak{h}+g_D'(x_1)\partial_{x_1x_2}\mathfrak{h})+(\partial_{x_2}\mathfrak{h})(\partial_{x_2x_1}\mathfrak{h}+g_D'(x_1)\partial_{x_2x_2}\mathfrak{h})=0\quad\mbox{on}\quad\Gamma_{g_D},
\end{equation*}
which provides
\begin{equation}\label{2D-fxy-xy}
\partial_{x_1x_2}\mathfrak{h}
=-\frac{\partial_{x_1x_1}\mathfrak{h}+\partial_{x_2x_2}\mathfrak{h}}{g_D'(x_1)\partial_{x_1}\mathfrak{h}+\partial_{x_2}\mathfrak{h}}\omega\quad\mbox{on}\quad\Gamma_{g_D}.
\end{equation}
By straightforward computations with using \eqref{2D-fxy-xy}, we get the conormal boundary condition \eqref{co-boundary}.

Fix a constant $L>0$ and let $\eta$ be a $C^{\infty}$ function satisfying 
\begin{equation*}\label{2D-eta}
\eta=1\quad\mbox{for}\quad 0\le x_1<L,\quad\eta=0\quad\mbox{for}\quad x_1>L+1,\quad |\eta'(x_1)|\le 2.
\end{equation*}
Multiply \eqref{2D-infty-omega} by  $\eta^2\omega$, and integrate the result over the domain $N_{g_D}^+$ to get
\begin{equation}\label{2D-sum-omega}
\iint_{\mathcal{N}^+_{g_D}}\frac{\eta^2|\nabla\omega|^2}{\rho}dx_2dx_1=\sum_{i=1}^6 I_i+\sum_{i=1}^2B_i,
\end{equation}
for  
\begin{equation*}
\begin{split}
&I_1:=-\iint_{\mathcal{N}^+_{g_D}}\frac{|\nabla\mathfrak{h}\cdot\nabla\omega|^2\eta^2}{\rho(\rho^2c^2-|\nabla\mathfrak{h}|^2)}dx_2dx_1,\\
&I_2:=-2\iint_{\mathcal{N}^+_{g_D}}\frac{\mathfrak{q}_{ij}}{\rho^2}\eta\omega(\partial_j\omega)(\partial_i\eta)dx_2dx_1,\\
&I_3:=2\iint_{\mathcal{N}^+_{g_D}}\frac{\mathfrak{t}_2\nabla\mathfrak{h}\cdot\nabla\eta}{\rho^2}\eta\omega^2dx_2dx_1,\\
&I_4:=-2\iint_{\mathcal{N}^+_{g_D}}\frac{\mathfrak{S}'\rho^{\gamma}}{\rho^2c^2-|\nabla\mathfrak{h}|^2}(\partial_i\mathfrak{h})(\partial_i\omega)\eta^2\omega dx_2dx_1,\\
&I_5:=\iint_{\mathcal{N}^+_{g_D}}\frac{1}{\gamma}\mathfrak{S}''\rho^{\gamma}\eta^2\omega^2dx_2dx_1,\\
&I_6:=-\iint_{\mathcal{N}^+_{g_D}}\frac{\rho^2(\mathfrak{S}'\rho^{\gamma-1})^2}{\rho^2c^2-|\nabla\mathfrak{h}|^2}\rho\eta^2\omega^2dx_2dx_1,\\
&B_1:=\int_{\Gamma_{g_D}\cup\Gamma_{\rm en}^+}\left(\frac{\mathfrak{q}_{ij}}{\rho^2}\partial_j \omega\right)\eta^2\omega\cdot{\bf n}_{\rm out} ds,\\
&B_2:=-\int_{\Gamma_{g_D}\cup\Gamma_{\rm en}^+}\left(\frac{\mathfrak{t}_2\partial_i\mathfrak{h}}{\rho^2}\right)\eta^2\omega^2\cdot{\bf n}_{\rm out} ds.
\end{split}
\end{equation*}
We claim that 
\begin{equation}\label{2D-IJ-est}
\left\{\begin{split}
&I_1+I_4+I_6\le 0,\\
&|I_k|\le C\int_L^{L+1}\int_{g_D(x_1)}^1 |\nabla \omega|^2 dx_2dx_1\quad\mbox{for}\quad k=2,3,\\
&|I_5|\le C{\sigma}\int_0^{L+1}\int_{g_D(x_1)}^1 |\nabla \omega|^2 dx_2dx_1,\\
&|B_k|\le C{\sigma}\int_0^{L+1}\int_{g_D(x_1)}^1|\nabla\omega|^2 dx_2dx_1+E\quad\mbox{for}\quad k=1,2, \\
\end{split}\right.
\end{equation}
where $E\ge0$ and $C>0$ are constants depending only on the data and $\alpha$.
From now on, the constant $C$ depends only on the data and $\alpha$, which may vary from line to line.

First, by the H\"older inequality, it holds that 
\begin{equation*}
\begin{split}
I_4&\le 2\left(\iint_{\mathcal{N}^+_{g_D}}\frac{|\nabla\mathfrak{h}\cdot\nabla\omega|^2\eta^2}{\rho(\rho^2c^2-|\nabla\mathfrak{h}|^2)}dx_2dx_1\right)^{1/2}\left(\iint_{\mathcal{N}^+_{g_D}}\frac{(\mathfrak{S}'\rho^{\gamma})^2\rho\eta^2\omega^2}{\rho^2c^2-|\nabla\mathfrak{h}|^2} dx_2dx_1\right)^{1/2}\\
&=2\sqrt{|I_1||I_6|}.
\end{split}
\end{equation*}
From this, we have 
$$I_1+I_4+I_6\le -|I_1|+2\sqrt{|I_1||I_6|}-|I_6|\le 0.$$

By a direct computation, one can easily check that there exists a constant $\sigma_{\star}\in (0,\sigma_1]$ depending only on the data and $\alpha$ so that if $\sigma\le \sigma_{\star}$, then we have 
\begin{equation}\label{far-est}
|\rho-\rho_0^+|\le \frac{\rho_0^+}{2},\quad  
|\mathfrak{O}_s-\mathfrak{O}_0|\le \frac{\mathfrak{O}_0}{2},\quad 
|\mathfrak{S}'(\mathfrak{h})|\le C\sigma,\quad\mbox{and}\quad
|\mathfrak{S}''(\mathfrak{h})|\le C{\sigma}
\end{equation}
in $\mathcal{N}_{g_D}^+$, where 
\begin{equation*}
\mathfrak{O}_0:=c_0^2-|{\bf u}_0|^2=\frac{\gamma p_0}{\rho_0^+}-u_0^2\quad\mbox{and}\quad\mathfrak{O}_s:=c^2-|{\bf u}|^2
=\frac{\gamma p}{\rho}-|{\bf u}|^2.
\end{equation*}
From \eqref{far-est}, we have 
\begin{equation}\label{rho12-mu}
|\mathfrak{t}_1|\le C,\quad |\mathfrak{t}_2|\le C|\mathfrak{S}'(\mathfrak{h})|\le C\sigma,\quad\mbox{and}\quad |\widetilde{\mu}|\le C\sigma\quad\mbox{in}\quad \mathcal{N}_{g_D}^+
\end{equation}
for $\mathfrak{t}_1$, $\mathfrak{t}_2$, and $\widetilde{\mu}$ defined by \eqref{rho1}, \eqref{rho2}, and \eqref{2D-omega-BC}, respectively.

Since $-2\omega(\partial_j\omega)\le \omega^2+|\nabla\omega|^2$ and $\rho\ge\rho_0^+/2$ in $\mathcal{N}_{g_D}^+$, we have 
\begin{equation}\label{2D-I2-est}
\begin{split}
&|I_2|\le C\int_L^{L+1}\int^1_{g_D(x_1)}(|\nabla\omega|^2+\omega^2)\,dx_2dx_1.\\
\end{split}
\end{equation}
By the slip boundary condition \eqref{2D-slip}, $\omega\equiv0$ on $\Gamma_{\rm w}^+$, and this yields that
\begin{equation*}\label{2D-omega-cont}
\omega(x_1,t)=-\int_{t}^1\partial_{x_2}\omega(x_1,x_2)\,dx_2\quad\mbox{for}\quad (x_1,t)\in\overline{\mathcal{N}_{g_D}^+}.
\end{equation*}
By the H\"older inequality,  we have
\begin{equation}\label{2D-omega22}
\omega^2(x_1,t)\le (1-t)\int_{t}^1(\partial_{x_2} \omega)^2(x_1,x_2)\,dx_2\le C\int_{g_D(x_1)}^1|\nabla\omega|^2\,dx_2
\end{equation}
for $(x_1,t)\in\overline{\mathcal{N}_{g_D}^+}$. 
Substituting \eqref{2D-omega22} into  \eqref{2D-I2-est}, we obtain
\begin{equation*}
\begin{split}
&|I_2|\le C\int_L^{L+1}\int^1_{g_D(x_1)}|\nabla\omega|^2\,dx_2dx_1.\\
\end{split}
\end{equation*}
By \eqref{far-est}-\eqref{rho12-mu} and \eqref{2D-omega22}, we have 
\begin{equation*}\label{2D-I35}
\begin{split}
|I_3|&\le C\int_L^{L+1}\int_{g_D(x_1)}^1\omega^2 \,dx_2dx_1\le C\int_L^{L+1}\int_{g_D(x_1)}^1|\nabla\omega|^2\, dx_2dx_1,\\
|B_2|
&\le C{\sigma}\int_0^{L+1}\int_{g_D(x_1)}^1|\nabla\omega|^2\, dx_2dx_1+E_2,
\end{split}
\end{equation*}
where $E_2\ge0$ is a constant depending only on the data and $\alpha$.
By \eqref{far-est} and \eqref{2D-omega22}, we have
\begin{equation*}
|I_5|\le C|\mathfrak{S}''|\int_0^{L+1}\int_{g_D(x_1)}^1\omega^2\,dx_2dx_1\le C{\sigma}\int_0^{L+1}\int_{g_D(x_1)}^1|\nabla\omega|^2\,dx_2dx_1.
\end{equation*}
It follows from \eqref{2D-omega-BC}, \eqref{far-est}, \eqref{rho12-mu}, and \eqref{2D-omega22} that 
\begin{equation*}
\begin{split}
|B_1|
\le C{\sigma}\int_0^{L+1}\int_{g_D(x_1)}^1|\nabla\omega|^2 \,dx_2dx_1+E_1,
\end{split}
\end{equation*}
where $E_1\ge0$ is a constant depending only on the data and $\alpha$.
Then the claim \eqref{2D-IJ-est} is proved.

From \eqref{2D-sum-omega}-\eqref{2D-IJ-est}, we have
\begin{equation*}
\begin{split}
\int_0^{L}\int_{g_D(x_1)}^1 |\nabla\omega|^2\, dx_2dx_1\le& C^{\sharp}{\sigma}\int_0^{L}\int_{g_D(x_1)}^1|\nabla\omega|^2\, dx_2dx_1\\
&+C\int_L^{L+1}\int_{g_D(x_1)}^1 |\nabla \omega|^2\, dx_2dx_1+CE,
\end{split}
\end{equation*}
where the constant $C^{\sharp}>0$ depends only on the data and $\alpha$.
If it holds that
$${\sigma}\le \frac{1}{2C^{\sharp}},$$
then we obtain from the previous estimate that  
\begin{equation*}\label{2D-RHS}
\int_0^{L}\int_{g_D(x_1)}^1 |\nabla\omega|^2\, dx_2dx_1\le C\int_L^{L+1}\int_{g_D(x_1)}^1 |\nabla \omega|^2 \,dx_2dx_1+CE.
\end{equation*}
Since $|\nabla\omega|\le C$ in $\overline{\mathcal{N}_{g_D}^+}$ by \eqref{Th3-est}, we have 
\begin{equation*}\label{2D-ven}
\int_0^L\int_{g_D(x_1)}^1|\nabla\omega|^2\, dx_2dx_1\le C
\end{equation*}
for some constant $C>0$ independent of $L$. 
Passing to the limit $L\rightarrow\infty$ yields 
\begin{equation*}
\int_0^{\infty}\int_{g_D(x_1)}^1|\nabla\omega|^2\, dx_2dx_1\le C.
\end{equation*}
Hence 
\begin{equation*}
\int_L^{L+1}\int_{g_D(x_1)}^1|\nabla\omega|^2\, dx_2dx_1\rightarrow 0\quad\mbox{as}\quad L\rightarrow\infty.
\end{equation*}
Since $\omega\in C^{1,\alpha}(\overline{\mathcal{N}^+_{g_D}})$, we have 
\begin{equation}\label{2D-omega-lim}
\|\nabla\omega(x_1,\cdot)\| _{C^0(\mathcal{N}_{g_D}^+\cap\{x_1>L\})}\rightarrow 0\quad\mbox{as}\quad L \rightarrow \infty.
\end{equation}
By \eqref{2D-omega-lim} and the slip boundary condition \eqref{2D-slip} on $\Gamma_{\rm w}^+$, we have 
\begin{equation}\label{omega0}
\|\omega(x_1,\cdot)\| _{C^0(\mathcal{N}_{g_D}^+\cap\{x_1>L\})}\rightarrow 0\quad\mbox{as}\quad L \rightarrow \infty.
\end{equation}
Since $\rho>\rho_0^+/2$ in $\mathcal{N}_{g_D}^+$ and $\omega=\partial_{x_1}\mathfrak{h}=-\rho u_2$, \eqref{omega0} implies that 
\begin{equation}\label{u2-0}
\|u_2(x_1,\cdot)\| _{C^0(\mathcal{N}_{g_D}^+\cap\{x_1>L\})}\rightarrow 0\quad\mbox{as}\quad L \rightarrow \infty.
\end{equation}
By \eqref{2D-omega-lim} and \eqref{u2-0}, we have 
\begin{equation}\label{nabla-u2}
\| u_2(x_1,\cdot)\| _{C^1(\mathcal{N}_{g_D}^+\cap\{x_1>L\})}\rightarrow 0\quad\mbox{as}\quad L \rightarrow \infty,
\end{equation}
from which 
\begin{equation*}
\|g_D'(x_1)\|_{C^1(\{x_1>L\})}\rightarrow 0\quad\mbox{as}\quad L\rightarrow\infty.
\end{equation*}
It follows from the equation \eqref{ES-eq} and \eqref{nabla-u2} that
\begin{equation}\label{p-0}
\|\partial_{x_2} p(x_1,\cdot)\| _{C^0(\mathcal{N}_{g_D}^+\cap\{x_1>L\})}\rightarrow 0\quad\mbox{as}\quad L\rightarrow\infty.
\end{equation}
By the boundary condition \eqref{3D-BC-Cont} and  \eqref{p-0}, we have 
\begin{equation*}\label{p-00}
\|p(x_1,\cdot)-p_0\| _{C^1(\mathcal{N}_{g_D}^+\cap\{x_1>L\})}\rightarrow 0\quad\mbox{as}\quad L\rightarrow\infty.
\end{equation*}
Furthermore, since we fixed $({\bf u}, \rho, p)$ in $\mathcal{N}\backslash\mathcal{N}_{g_D}^+$ to be
$({\bf u}, \rho, p)\equiv ({\bf 0}, \rho_0^-, p_0)$ right after Problem \ref{3D-Prob1} in Section \ref{sec-results}, we have 
\begin{equation*}
\begin{split}
&\lim_{L\to \infty}\| u_2(x_1,\cdot)\| _{C^1(\mathcal{N}\cap\{x_1>L\})} = 0,
\quad
\lim_{L\to \infty}\|p(x_1,\cdot)-p_0\| _{C^1(\mathcal{N}\cap\{x_1>L\})}= 0.
\end{split}
\end{equation*}
The proof of Theorem \ref{3D-Far-Thm} (b) is completed by choosing $\sigma_2$ as
$$\sigma_2=\min\left\{\sigma_1, \sigma_\star,\frac{1}{2C^{\sharp}}\right\}.$$
\qed


\vspace{.25in}
\noindent
{\bf Acknowledgements:}
The research of Myoungjean Bae was supported in part by  Samsung Science and Technology Foundation
under Project Number SSTF-BA1502-02.
The research of Hyangdong Park was supported in part by  Samsung Science and Technology Foundation
under Project Number SSTF-BA1502-02.

\smallskip

\end{document}